\numberwithin{equation}{section}
\providecommand{\keywords}[1]{\textbf{{Key words.}} #1}
\newcommand{\R}{\mathbb R}
\def\be#1\ee{\begin{equation}#1\end{equation}}
\newtheorem{corollary}{Corollary}
\newtheorem{proposition}{Proposition}
\theoremstyle{definition}
\newtheorem{alg}{Algorithm}[section]
\newtheorem{remark}{Remark}
\def\RR{\mathbb R}
\def\T{\mathcal T}
\def\be{\begin{equation}}
	\def\ee{\end{equation}}
\def\bea{\begin{eqnarray}}
	\def\eea{\end{eqnarray}}
\title{Kinetic based optimization enhanced by genetic dynamics}
\author{Giacomo Albi\footnote{	Dipartimento di Informatica, Universit\`a di Verona, Verona, Italy, e-mail: giacomo.albi@univr.it}, Federica Ferrarese\footnote{		Dipartimento di Matematica, Universit\`a di Trento, e-mail: federica.ferrarese@unitn.it}, Claudia Totzeck\footnote{		School of Mathematics and Natural Sciences, University of Wuppertal, e-mail: 	totzeck@uni-wuppertal.de}}
\begin{document}
	\date{}
	\maketitle

\begin{abstract}
 We propose and analyse a variant of the recently introduced kinetic based optimization method that incorporates ideas like survival-of-the-fittest and mutation strategies well-known from genetic algorithms. Thus, we provide a first attempt to reach out from the class of consensus/kinetic-based algorithms towards genetic metaheuristics. Different generations of genetic algorithms are represented via two species identified with different labels, binary interactions are prescribed on the particle level and then we derive a mean-field approximation in order to analyse the method in terms of convergence. Numerical results underline the feasibility of the approach and show in particular that the genetic dynamics allows to improve the efficiency, of this class of global optimization methods in terms of computational cost.
\end{abstract}

\keywords{Global optimization; Mean-field limit; Boltzmann equations; Particle-based methods;  Consensus-based optimization.}

\section{Introduction}\label{sec:intro}
In recent years a new perspective on gradient-free methods for global optimization of non-convex high-dimensional functions was established based on collective motion of particles. The mathematical depiction of self-organizing dynamics has significant implications in various applications, including traffic flow, pedestrian dynamics, flock behavior of birds, cell migration, and its primary focus lies in comprehending emergent properties of large interacting systems across different scales, see for example \cite{albi2019vehicular,bellomo2021life,bellomo2022towards,carrillo2010particle,motsch2014heterophilious}.
Hence, this new class of methods aims to exploit the collective dynamics of swarms to find global optimizers as consensus points emerging from a system of interacting agents, where each agent represents an evaluation of the objective function. 
%Hence, this new class of methods aims to exploit collective dynamics of swarms and define communication strategies among the swarm members to find global optimizers, thereby allowing for rigorous convergence analysis using the mesoscopic approximations. In particular, the characterization of the long-time behaviour allows to prove that the swarms concentrate arbitrarily close by the unique global minimizer of the objective function, we refer to ~\cite{carrillo2018analytical,fornasier2020consensus,fornasier2021consensus,grassi2021particle,pinnau2017consensus} and the overview in \cite{totzeck2021trends}.
In this context, Consensus-based optimization (CBO) is a global optimization method allowing rigorous proofs that the swarms concentrate arbitrarily close to the unique global minimizer of the objective function,  we refer to ~\cite{carrillo2018analytical,fornasier2020consensus,fornasier2021consensus,grassi2021particle,pinnau2017consensus} and the overview in \cite{totzeck2021trends}.
In CBO, differently from gradient-based methods, such as stochastic gradient descent\cite{bottou2018optimization},  the landscape of the objective functions is explored via function evaluations only. Indeed, the objective value at the current position and the current position of the agents is exchanged, with the help of a weighted mean value which is constructed such that the Laplace principle \cite{dembo1998zeitouni} applies. The dynamics is tailored such that the agents confine towards the weighted-mean on the one hand, and randomly explore the landscape on the other hand. This already indicates that the two components of the dynamics need to be well-balanced to obtain desirable results.
CBO was a first step towards the mathematical understanding of metaheuristics for global optimization, such as particle swarm optimization (PSO) methods, where second order dynamics is used for the evolution of the particles \cite{kennedy1995particle,poli2007particle}. Recently, the gap between the first-order method CBO and PSO was bridged in \cite{grassi2021particle}, further extensions were provided for systems with memory or momentum effects \cite{totzeck2020memory,Chen2022adamCBO}, for constrained optimization \cite{borghi2023constrained}, multi-objective problems \cite{borghi2022consensus,klamroth2022consensus} and jump-diffusion processes \cite{kalise2023jump}.
More recently, kinetic-based optimization (KBO) methods have been proposed in\cite{benfenati2022binary}, where
%From one iteration to the next, an agent either keeps or changes its position and label via transition process in the space of labels, mimic the evolutionary process of the GA for 
% Thus, we introduce the following kinetic based algorithm
%with genetic transition:
%combining the idea of Consensus based algorithms \cite{pinnau2017consensus,carrillo2018analytical, carrillo2021consensus, fornasier2021consensus}, following a kinetic approach \cite{benfenati2022binary} and the genetic algorithm \cite{zbigniew1993ga,toledo2014global}.
%The consensus based optimization method (CBO), is mainly inspired from the study of opinion dynamics and consensus formation. The main idea is that agents, called also optimizing particles, are subjected to a drift toward the estimate of the position of the global optimizer, which is computed as a convex combination of particle locations weighted by the cost function according to Laplace principle \cite{dembo1998zeitouni}, and a random perturbation which allow them to explore the space. In contrast with this last approach, whose dynamics is of mean field type, we can find other methods such as the kinetic based optimization (KBO) ones which assume that interactions between agents can estimate the position of the global minimum. In particular,  
each agent with position $x$ moves subject to the following interaction rules  	
\begin{equation}\label{eq:bin_1pop}
	x' = x + \nu_F  (\hat{x}(t)-x)+\sigma_F   D(x) \xi,
\end{equation}
where $x'$ denotes the post iteration position,  $\sigma_F$, $\nu_F$ are positive parameters which allow to balance the exploitation and exploration of the swarm, $\xi$ is a random perturbation term, $D(x)$ is a diffusion matrix and $\hat{x}(t)$ denotes the global estimate of the position of the global minimizer at time $t$. In addition to this dynamics a drift towards the local best and a local diffusion term is proposed in \cite{benfenati2022binary}. The corresponding dynamics is described by a multidimensional Boltzmann equation and can be simulated with the help of Monte Carlo algorithms \cite{albi2013binary,pareschi2013interacting}.

With this work, we aim at extending KBO algorithm reaching out towards genetic algorithms (GA) \cite{zbigniew1993ga,toledo2014global}, a very popular class of metaheuristics that is widely used in engineering.  
%GA imitate an evolutionary biological processes where a population of potential solutions mutate towards better states, \cite{}. This evolutionary process typically include three main steps: {\em mutation}, which introduces small random changes in the states of the population, {\em crossover}, which combines the states from two parents to create new offspring, and {\em selection} where fittest individuals, have a higher likelihood of being selected for reproduction and passing their genetic information to the next generation.
The GA models a natural selection process based on biological evolution \cite{golberg1989genetic,mitchell1995genetic}. To this end, individuals (parents) from the current population are selected and their objective values (gene information) is combined to generate the next generation (children). The selection process is usually driven by a survival-of-the-fittest idea, hence over successive generations, the system is assumed to evolve towards an optimal solution. Agents in promising positions, i.e., with small objective values, are labeled as parents and the others are labeled as  children. Parents do not modify their position, hence they survive the iteration like in a survival-of-the-fittest strategy. In contrast, a child in position $x$ interacts with a randomly chosen parent in position $x_*$ and updates the position according to the rules
\begin{align}\label{eq:GA}
		x' &= x_*, \; \quad \text{with rate} \; \nu_F, \nonumber \\
		x' &= x, \; \quad \text{with rate} \; 1-\nu_F.
\end{align}
Here $x'$ denotes the post interaction position and $\nu_F >0$ is the jump rate. Furthermore, mutations can occur, that means a child in position $x$ encounters a random perturbation of the form
\begin{equation}\label{eq:GA_1}
	x' = x + \sigma_F  \xi,
\end{equation}
where $\sigma_F$ is a positive parameter and $\xi$ random vector drawn from a normal distribution.   Slight modifications of the mutation process can be considered, assuming that all children encounter a random perturbation of the type
\begin{equation}\label{eq:GA_mod}
	x' = x +\sigma_F D(x) \xi,
\end{equation}
with diffusion matrix $D(x)$. 

Thus, to establish a relation between KBO and GA, we divide the swarm into two species called followers (children) and leaders (parents) via a labeling strategy, evolving according to different transition processes \cite{albi2019leader,loy2021boltzmann}. 
 In this setting, leaders are selected analogously to parents in genetic algorithms, whereas  the process of child's generation is substituted by the relaxation of the followers toward the leaders' position.
Then, the dynamics of KBO \eqref{eq:bin_1pop} is split between the leaders, which move toward the global optimum estimate $\hat x$, and the followers, which explore the minimization landscape. Furthermore, we aim to improve the performance of these algorithms by relying on the higher flexibility of the particle dynamics.

More details are presented in the following sections which are organized as follows:
in Section \ref{sec:GKBO} we introduce the  Genetic Kinetic based optimization methods focusing on the description of the binary interactions rules which describes the dynamics. In Section \ref{sec:mean field}, we derive the mean field, in particular the evolution equations of the density functions of the two species. In Section \ref{sec:leaders}, we discuss different strategies of how to assign labels. In Section \ref{sec:moments_convergence}, we provide a theoretical analysis including the exponential decay of the variance and  the convergence of the method to the global minimum. In Section \ref{sec:num_methods}, we describe Nanbu's algorithm which is used to obtain the numerical results presented in Section \ref{sec:validation}. Here, we show different numerical experiments, testing the efficiency in terms of success rate and   number of iterations and compare the results  of the GKBO algorithm, to KBO and genetic algorithms.

\section{Genetic kinetic based optimization (GKBO)} \label{sec:GKBO} 
The GKBO method we propose in the following enhances kinetic based optimization, which belongs to the class of consensus based algorithms, with ideas from genetic algorithms. To this end, we assume to have a population divided into two groups, similar to the parent and children populations in  genetic algorithms. The two groups are specified with the help of labels leading to a modified KBO dynamic with followers and leaders. The dynamics is tailored in such a way that the the population clusters at the unique global minimum of the possibly non-convex objective function $\mathcal{E} \colon \RR^d \rightarrow \RR$. Hence, in the long time limit the dynamics solves the global optimization problem given by
\begin{equation}\label{eq:optProblem}
	\min\limits_{x\in\RR^d} \mathcal{E}(x),
\end{equation}
where $\mathcal E$ is assumed to have a unique global minimizer. In more detail, each agent is described by its position $x \in \mathbb{R}^d$ varying continuously
and a binary variable for the leadership-level $\lambda \in \{0,1\}$.  In the following we identify leaders with  $\lambda = 1$ and followers with $\lambda = 0$. We are interested in the
evolution of the density function \begin{equation}\label{eq:def_f}
	f=f(x,\lambda,t), \qquad f: \R^d\times\left\{0,1\right\} \times \R_+\rightarrow \R_+
\end{equation}
where $t\in\RR^+$ denotes as usual the time variable. In the rest of the paper we denote $f(x,\lambda,t)$ as $f_\lambda(x,t)$ and define
\begin{equation}
	g(x,t) = \sum_{\lambda \in \{0,1\}}   f_\lambda(x,t), 
\end{equation}
to be the density of the whole population at time $t$. We assume that $g(x,t)$ is normalized, hence a probability measure and introduce the fractions $\rho_\lambda\in[0,1]$ with $\lambda\in \{0,1\}$ s.t. $\rho_0 + \rho_1 = 1$ and $f_\lambda(x,t)/\rho_\lambda$ are probability measures as well. 

\subsection{Binary interaction between agents}
A binary interaction of agents with state $(x,\lambda)$ and $(x_*,\lambda_*)$ is described by their post-interaction positions given by
\begin{align}\label{eq:bin_2pop}
		x' & = x + \left( \nu_F (x_*-x) + \sigma_F D(x)\xi \right)\lambda_* (1-\lambda)+ \nu_L(\hat{x}(t)-x)\lambda, \nonumber \\
		x_*' &= x_*,
\end{align}
where $\sigma_F,$ $\nu_F$, $\nu_L$, are positive parameters, $\xi$ is a normally distributed random number and $D(x)$ is the diffusion matrix, defined to be either 
\begin{equation}\label{eq:diffusion_iso}
	D(x) = \vert \hat{x}(t)-x\vert Id_d,
\end{equation}
in the case of isotropic diffusion \cite{pinnau2017consensus}, or 
\begin{equation}\label{eq:diffusion_ani}
	D(x) = diag\{(\hat{x}(t)-x)_1,\ldots (\hat{x}(t)-x)_d\} ,
\end{equation}
in the case of anisotropic diffusion \cite{carrillo2021consensus}.
In equation \eqref{eq:bin_2pop}- \eqref{eq:diffusion_iso}-\eqref{eq:diffusion_ani} the term $\hat{x}(t)$ is the global estimate of the best position of the minimizer.  The term $\hat{x}(t)$ is computed as a convex combination of particle
locations weighted by the cost function according to Laplace principle \cite{dembo1998zeitouni}. In case we consider the whole population, we have
\begin{equation}\label{eq:x_tot}
	\hat{x}(t) = \frac{\int_{\mathbb{R}^d}x e^{-\alpha \mathcal{E}(x)}g(x,t)dx}{\int_{\mathbb{R}^d} e^{-\alpha \mathcal{E}(x)}g(x,t)dx},
\end{equation} 
and Laplace principle yields
\begin{equation}\label{eq:laplace}
	\lim_{\alpha \to \infty} \Big( -\frac{1}{\alpha} \int_{\mathbb{R}^d} e^{-\alpha \mathcal{E}(x)} g(x,t) dx \Big) = \inf_{x\in \text{supp }  g(x,t)} \mathcal{E}(x).
\end{equation} 
In the section on numerical results we will also consider variants, where the weighted mean is computed with information of leaders or followers only.
In \eqref{eq:bin_2pop} we use a compact form to describe both the followers and leaders dynamics. In particular, if we assume $\lambda =0$ (and $\lambda_*=1$) we retrieve the followers dynamics, 
	\begin{align*}
		x' & = x + \left( \nu_F (x_*-x) + \sigma_F D(x)\xi \right), \\
		x_*' &= x_*,
	\end{align*}
where $x,x_*$ denote the pre-interaction positions of a follower and a leader, respectively.
Followers are attracted toward a randomly selected leader and explore the space, searching for a possible position  of the global minimizer. On the contrary, if $\lambda = 1$, we retrieve the leaders dynamics,
\begin{align*}
			x' & = x +  \nu_L(\hat{x}(t)-x), \nonumber \\
	x_*' &= x_*,
\end{align*}
where $x,x_*$ denote the pre-interaction positions of two leaders.
 Hence, leaders do not explore the space but they relax their position toward the estimated position of the global minimizer at time $t$, which is given by $\hat{x}(t)$, defined in \eqref{eq:x_tot}. 
\begin{remark}
	Note that \eqref{eq:bin_2pop} implies that no follower-follower interactions are considered, since if both $\lambda$ and $\lambda_*$ are equal to zero the agents keep their positions.
\end{remark}
\begin{remark}
	 The goal of these algorithms is to find the global minimizer by letting agents concentrate on it. The exploration part slows down the convergence, especially for high value of the diffusion parameters. In the GKBO algorithm, just one of the two populations, the followers, is involved in the exploration process while the other agents, the leaders, are reaching quite immediately consensus on the estimated position of the global minimizer. Under these assumptions, the convergence process is speeded up, as we will see in the numerical experiments in Section~\ref{sec:test1}.
	\end{remark}
\subsection{Emergence of leaders and followers}\label{sec:leaders}
The emergence of leaders and followers is realized with the help of a transition operator which acts as follows
	\begin{align}\label{eq:master_0}
		\mathcal{T}[f_0](x,t) &= \pi_{L\to F}(x,\lambda;f)f_1(x,t)-  \pi_{F\to L}(x,\lambda;f) f_0(x,t), \nonumber \\
		\mathcal{T}[f_1](x,t) & =  \pi_{F\to L}(x,\lambda;f)f_0(x,t)-  \pi_{L\to F}(x,\lambda;f)f_1(x,t),
	\end{align}
where $\pi_{F\to L}(\cdot)$ and $\pi_{L\to F}(\cdot)$ are certain transition rates, possibly depending on the current states.
In the simplest case, if we assume that leaders emerge with fixed rate $\pi_{FL}>0$ and return to the followers status with fixed rate $\pi_{LF}>0$ then the transition rates reduce to
\begin{equation}\label{eq:rates_test_0}
	\pi_{L\to F}  =\pi_{LF},\qquad
	\pi_{F\to L} =\pi_{FL}.
\end{equation}
%If instead we associate a weight $\omega(x,t)$ to each agent in relation to their position $x$ on the function $\mathcal{E}$ that we are going to minimize and we assume that the portion of  agents whose weight is smaller than a certain value $\bar{\omega}$  is in the leaders status while the remaining ones are in the followers status, then we can write 
However, we also cover more general cases, as for example proposed in \cite{haskovec2013flocking}, where each agent in position $x$ is associated with a weight  
\begin{equation*}
	\begin{split}
		\omega(x,t) =\frac{1}{N} &\# \left\lbrace y\in \mathcal{A}(t): \vert  \mathcal{E}(x_{min}) -  \mathcal{E}(y)\vert < \vert  \mathcal{E}(x_{min}) -  \mathcal{E}(x)\vert  \right\rbrace \\
		= &  \frac{1}{N}\sum_{\lambda \in \{0,1\}}\int_{\mathbb{R}^{d}} \chi_{[0,1)}\left(\frac{\vert \mathcal{E}(x_{min}(t)) - \mathcal{E}(y) \vert}{\vert \mathcal{E}(x_{min}(t)) -  \mathcal{E}(x) \vert}\right) f(y,\lambda,t) dy, 
	\end{split}
\end{equation*} 
with $\chi_{[0,1)}(\cdot)$ denoting the characteristic function of the interval $[0,1)$ and
\begin{equation*}
	x_{min}(t) = \arg \min_{x \in \mathcal{A}(t)} \mathcal{E}(x),
\end{equation*}
where $\mathcal{A}(t)$ is the set of agents at time $t$. Assuming that agents with weight smaller than a certain threshold $\bar{\omega}$, which depends on the amount of leaders that we would like to generate, are in the leaders status while the others are in the followers status, then we can write the transition rates as follows 
\begin{align}\label{eq:rates_test_1}
	\pi_{L\to F}  =\begin{cases}
		1, \qquad \text{if } \omega(x,t)>\bar{\omega},\\
		0, \qquad \text{if } \omega(x,t)\leq \bar{\omega},\\
	\end{cases}   \qquad
	\pi_{F\to L}  =\begin{cases}
		0, \qquad \text{if } \omega(x,t) \geq \bar{\omega},\\
		1, \qquad \text{if } \omega(x,t)<\bar{\omega}.\\
	\end{cases}
\end{align}
The evolution of the emergence and decay of leaders can be described by the master equation
\begin{equation}\label{eq:Ldef2}
	\dfrac{d}{dt}\rho_\lambda(t) + \int_{\RR^{2d}} \T[f](x,\lambda,t)\,dx=0,
\end{equation}
for $\lambda \in \{0,1\}$, with $\rho_\lambda(t) = \int_{\RR^d} f_\lambda(x,t) dx$.
From the above definition of the transition operator $\T[\cdot]$, it follows that 
\begin{equation}
	\dfrac{d}{dt}\sum_{\lambda\in\{0,1\}}\rho_\lambda(t)=0. 
	\label{eq:tnc}
\end{equation}
In case of constant transition rates 
\begin{equation}\label{eq:constant_rates}
	\pi_{L\to F}(\cdot) = \pi_{LF},\qquad \pi_{F\to L}(\cdot) = \pi_{FL},
\end{equation}
we can rewrite equation \eqref{eq:Ldef2} as 
\begin{align}\label{eq:master_constant}
		&\partial_{t} \rho_1(t) =\pi_{FL} \rho_0(t) - \pi_{LF}\rho_1(t),\nonumber \\
		&\partial_{t} \rho_0(t) =\pi_{LF} 	\rho_1(t)-\pi_{FL} \rho_0(t).
\end{align}
which allows us to calculate its stationary solution explicitly as
\begin{equation}\label{eq:stationary}
	\rho_1^\infty = \frac{\pi_{FL}}{\pi_{LF} + \pi_{FL}},\qquad 	\rho_0^\infty = \frac{\pi_{LF}}{\pi_{LF} + \pi_{FL}}.
\end{equation}

\begin{remark}\label{remark} 
		The weighted strategy is inspired from the selection criterion of GA, where parents are chosen to be the agents in best position w.r.t.~the cost function. In the numerical experiments we will also consider a mixed strategy, assuming that a certain percentage $\bar{p}$ of the total amount of leaders change their label according to the weighted strategy and the remaining ones changes their label randomly.

\end{remark}

\section{Derivation of the mean-field equation}\label{sec:mean field}
Combining the interaction and transition dynamic described in the previous section, we obtain the evolution of the density function $f_\lambda(x,t)$ which is described by  the integro-differential equation of Boltzmann-type 
\begin{equation}\label{eq:boltz_lin}
	\partial_t f_\lambda(x,t)  -\T[f_\lambda](x,t)= \sum_{ \lambda_*\in\{0,1\}} Q(f_\lambda,f_{\lambda_*})(x,t),
\end{equation}
where $\T[\cdot]$ is the transition operator and $Q(\cdot,\cdot)$ is the binary interaction operator defined as follows
\begin{equation}	\label{eq:Bo}
	Q(f_\lambda,f_{\lambda_*}) =\eta  \int_{\mathbb{R}^{2d}}\left(\dfrac{1}{J}f_\lambda(x',t)f_{\lambda_*}(x_*',t)-f_\lambda(x,t)f_{\lambda_*}(x_*,t)\right)dx dx_*, 
\end{equation}
where $(x',x_*')$ are the pre-interaction positions generated by the couple $(x,x_*)$ after the interaction \eqref{eq:bin_2pop}. The term $J$ denotes the Jacobian of the transformation $(x,x_*)\rightarrow (x',x_*')$ and  $\eta>0$ is a constant relaxation rate representing the interaction frequency.
To obtain a weak-formulation, we consider a test function $\phi(x)$  and rewrite the collision operator  
\begin{equation}\begin{split}\label{eq:collisional_op}
		\int_{\RR^{d}} & Q(f_\lambda,f_{\lambda_*})(x,t)\phi(x)dx =\eta  \int_{\RR^{2d}}\ \left(\phi(x')-\phi(x)\right) f_{\lambda_*}(x_*,t) f_\lambda(x,t) dx dx_* .
\end{split}\end{equation} 
Hence, the weak form of \eqref{eq:boltz_lin} reads
\begin{align}\label{weak_formulation}
			\frac{\partial}{\partial t}\int_{\mathbb{R}^d} f_\lambda(x,t)& \phi(x) dx-\int_{\mathbb{R}^d} \mathcal{T}[f_\lambda](x,t) \phi(x) dx = \nonumber \\ &\eta \sum_{\lambda_*\in\{0,1\}} \left\langle \int_{\mathbb{R}^{2d}}\Big[\phi(x')-\phi(x)\Big] f_\lambda(x,t) f_{\lambda_*}(x_*,t) dx dx_* \right\rangle.
\end{align}
%\subsection{Derivation of the model}\label{sec:model_derivation} 
To simplify the computations, we assume to have constant transition rates \eqref{eq:constant_rates} and to be in the quasi-stationary state $\rho_\lambda^\infty$ i.e. $\rho_\lambda \approx  \rho_\lambda^\infty$ for any $\lambda\in\{0,1\}$ as in \eqref{eq:stationary}. Moreover, we introduce the scaling parameter $\varepsilon > 0$ and consider
\begin{equation}\label{eq:scaling}
	\begin{split}
		&\nu_F \rightarrow \frac{\nu_F}{\rho_1}\varepsilon,\qquad \nu_L \rightarrow \frac{\nu_L}{\rho_1}\varepsilon, \qquad \sigma_F \rightarrow \frac{\sigma_F}{\sqrt{\rho_1}}\sqrt{\varepsilon}, \qquad \eta \rightarrow \frac{1}{\varepsilon}.
	\end{split}
\end{equation}
This scaling corresponds to the case where the interaction kernel concentrates on binary interactions producing very small changes in the agents position but at the same
time the number of interactions becomes very large.

To obtain the mean-field equation, we consider the Taylor expansion of the test function $\phi(x')$ centred in $x$ given by
\begin{equation}
	\phi(x')-\phi(x) =\nabla_x\phi(x)\cdot  (x'-x)  + \frac{1}{2} \Delta_x \phi(x)  (x'-x)^2 + \mathcal{O}(\varepsilon^2),
\end{equation}
and use it to rewrite \eqref{weak_formulation} as follows
\begin{align}\label{eq:weak_formulation_foll}
		&\frac{\partial}{\partial t}\int_{\mathbb{R}^d} f_\lambda(x,t)\phi(x) dx -\int_{\mathbb{R}^d} \mathcal{T}[f_\lambda](x,t) \phi(x) dx =\nonumber \\
		& \sum_{\lambda_*\in\{0,1\}} \left\lbrace  \int_{\RR^{2d}} \left( \frac{\nu_F}{\rho_1} (x_*-x)  \lambda_* (1-\lambda) +\frac{\nu_L}{ \rho_1} (\hat{x}(t)-x) \lambda \right)\cdot \nabla_x \phi(x) df_\lambda df_{\lambda_*}\right.\nonumber \\
		&+\left.\frac{\sigma^2_F}{2} \int_{\RR^{2d}} D^2(x)(1-\lambda)^2\lambda_*^2 \Delta_x \phi(x) df_\lambda df_{\lambda_*} \right\rbrace + \mathcal{O}(\varepsilon),
\end{align}
where for simplicity we write  $df_\lambda = f_\lambda(x,t) dx $ and $df_{\lambda_*} = f_{\lambda_*}(x_*,t)dx_*$. 
Now, taking  the limit $\varepsilon \to 0$, integrating by parts and rewriting the equation in strong form yields
\begin{align}\label{eq:boltz_strong_lead_foll}
		& \frac{\partial}{\partial t}  f_0(x,t) - \mathcal{T}[f_0](x,t)
		= \frac{\sigma_F^2}{2} \Delta_x\Bigl[D^2(x) f_0(x,t)\Bigr] - \nu_F \nabla_x \cdot \Bigl[\Bigr(\frac{m_1(t)}{\rho_1} -x \Bigl) f_0(x,t)\Bigr], \nonumber \\&
		\frac{\partial}{\partial t} f_1(x,t) -  \mathcal{T}[f_1 ](x,t) = -\frac{\nu_L}{ \rho_1}   \nabla_x \cdot \Bigl[\Bigr(\hat{x}(t) -x \Bigl) f_1(x,t)\Bigr],
\end{align}
where $D(x)$ is the diffusion matrix defined in \eqref{eq:diffusion_ani}-\eqref{eq:diffusion_iso}, $\hat{x}(t)$ is the global estimate of the global minimizer at time $t$ defined in equation \eqref{eq:x_tot} and
\begin{equation}\label{eq:mean_leaders} 
	m_1(t) = \int_{\RR^{2d}} x f_1(x,t) dx
\end{equation}
denotes the centre of mass of the leaders at time $t$.
\begin{remark}
	Multiplying both side of the second equation in   \eqref{eq:boltz_strong_lead_foll} by  $x/\nu_L$ integrating and taking the formal limit $\nu_L\to + \infty$, we get 
	\[
	\frac{m_1(t)}{\rho_1} = \hat{x}(t).
	\]
	Plugging it into the first equation in \eqref{eq:boltz_strong_lead_foll}, assuming $\mathcal{T}[f_0](x,t)=0$, we recover the equation that governs the dynamics in absence of leaders that is 
	\begin{equation}\label{eq:boltz_strong_NOlead}
		\frac{\partial}{\partial t}  f_0(x,t) 
		= \frac{\sigma_F^2}{2} \Delta_x \Bigl[D(x)^2 f_0(x,t)\Bigr] - \nu_F \nabla_x \cdot \Bigl[\Bigr(\hat{x}(t) -x \Bigl) f_0(x,t)\Bigr].
	\end{equation}
	%	\end{itemize}
\end{remark}
%\begin{remark}
%Introducing a parameter $\gamma \in [0,1] $, the binary interaction rules in \eqref{eq:bin_2pop} can be rewritten as 
%	\begin{equation}\label{eq:bin_2pop_mod}
%	\begin{cases}
%		&x' = x + \left( \nu_F (\gamma m_1(t) + (1-\gamma) x_*-x) + \sigma_F D(x) \xi\right) \left( 1-\lambda\right) \lambda_* + \frac{\nu_L}{\beta} (\hat{x}(t)-x)\lambda,\\
%		&x_*' = x_*,
%	\end{cases}
%\end{equation}
%assuming that followers can interact also with the leaders mean. It can be shown that assuming $\gamma = 1$, the equations \eqref{eq:boltz_strong_lead_foll} are still valid. 
%\end{remark}

%\section{Leaders emergence}\label{sec:leaders} We introduced in the previous paragraphs the operator $\T[\cdot]$ characterizing the evolution of the agents in the discrete space of labels (leaders/followers). The transition operators 
To summarize, the diagram in Figure \ref{fig:diagram} describes the relation between the three algorithms at the particle and mean field level.	
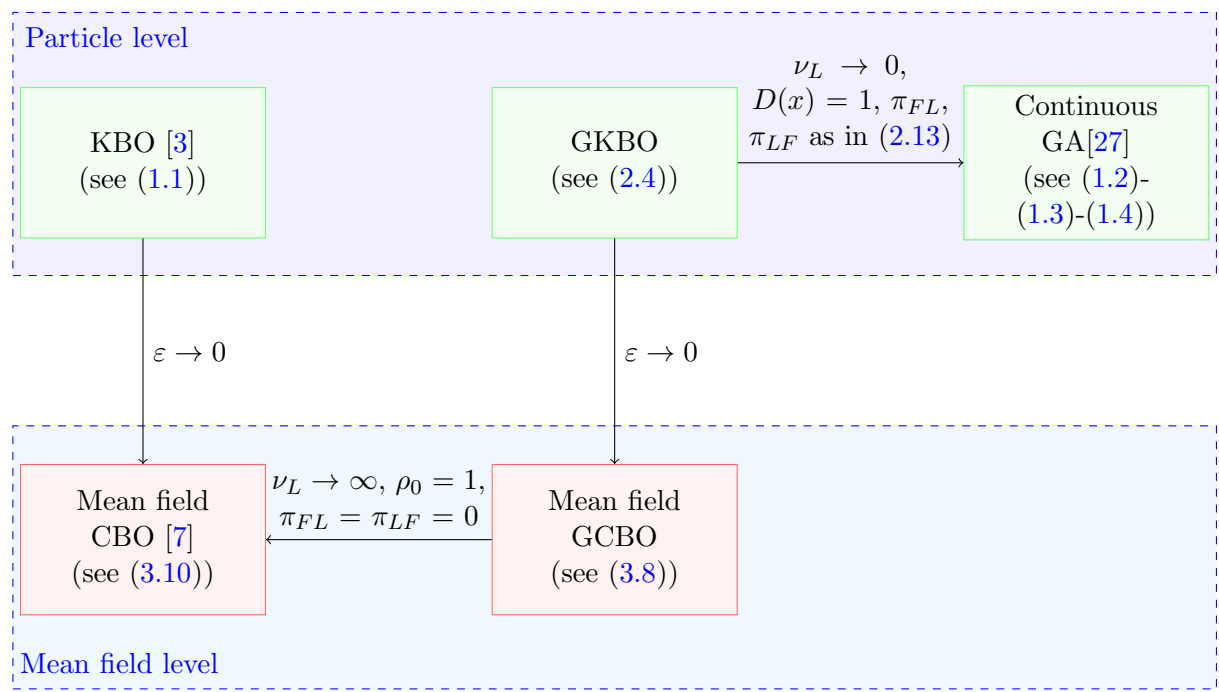
\begin{figure}[!htb] 
	\centering
\scalebox{.7}	{\begin{tikzpicture}[ 
		greenode/.style={rectangle, draw=green!60, fill=green!5, very thick, minimum size=7mm},rednot/.style={rectangle, draw=red!60, fill=red!5, very thick, minimum size=7mm},
		]
		\tikzstyle{greenode} = [ rectangle, 
		minimum width=2.5cm, 
		minimum height=2cm, 
		text centered, 
		text width=3cm, 
		draw=green!60, 
		fill=green!5,node distance=3cm]
		\tikzstyle{rednode} = [ rectangle, 
		minimum width=2.5cm, 
		minimum height=2cm, 
		text centered, 
		text width=3cm, 
		draw=red!60, 
		fill=red!5,node distance=3cm]
		%Nodes
		\draw[blue,dashed,fill=blue!6] (-8,-1.5) rectangle (8,2) node[yshift=-2ex, xshift =- 91ex] {Particle level} ;
		\draw[blue,dashed,fill=cyan!6] (-8,-7) rectangle (8,-3.5 ) node[yshift=-20.5ex, xshift =- 88ex] {Mean field level} ;
		\node[greenode]      (GKBO)      {GKBO \\ (see \eqref{eq:bin_2pop})};
		\node[greenode]        (GA)       [right=of GKBO] {Continuous GA\cite{toledo2014global}\\(see \eqref{eq:GA}-\eqref{eq:GA_1}-\eqref{eq:GA_mod})};
		\node[greenode]     (KBO)       [left=of GKBO] {KBO \cite{benfenati2022binary} (see \eqref{eq:bin_1pop})};
		\node[rednode]        (MFCBO)       [below=of KBO] {Mean field
			
			CBO \cite{carrillo2018analytical} (see \eqref{eq:boltz_strong_NOlead})};
		\node[rednode]        (MFGCBO)       [below=of GKBO] {Mean field GCBO \\(see \eqref{eq:boltz_strong_lead_foll})};
		%Lines
		\draw[<-] (GA.west) -- (GKBO.east)node[text width=3cm,	text centered,midway,above ]   {$\nu_L \to 0$, $D(x)=1$, 
			$\pi_{FL}$, $\pi_{LF}$ as in \eqref{eq:rates_test_1}};
		\draw[<-] (MFCBO.north) -- (KBO.south) node[ pos=0.5, anchor=west]  {$\varepsilon \to 0$ };
		\draw[<-] (MFGCBO.north) -- (GKBO.south) node[ pos=0.5, anchor=west]  {$\varepsilon \to 0$ };
		\draw[<-] (MFCBO.east) -- (MFGCBO.west)node[text width=3cm,	text centered,midway,above ]   {$\nu_L \to \infty$, $\rho_0 = 1$,  
			
			$\pi_{FL} = \pi_{LF} = 0$};
	\end{tikzpicture}}
\vspace*{8pt}
	\caption{Diagram describing the relation between the KBO, the GKBO with weighted and random strategies and the GA. }
	\label{fig:diagram} 
\end{figure}

\section{Moments estimates and convergence to the global minimum}\label{sec:moments_convergence}
Following the idea introduced in \cite{benfenati2022binary,pinnau2017consensus} we provide moments estimates, showing that the variance decreases exponentially to zero, and we prove the convergence of the method toward the position of the global minimum. In this section we study the behaviour of the two population dynamic, we therefore assume throughout this section $\rho_0, \rho_1 >0$.
\subsection{Evolution of the moment estimates}\label{sec:moment} 
We define the first two moments of the total population by 
\begin{equation}\label{eq:2moments}
	m(t) = m_0(t) + m_1(t),\qquad e(t) = e_0(t) + e_1(t),
\end{equation}
respectively, where
\begin{align}\label{eq:moments_F_L_bis}
		&m_0(t) =  \int_{\mathbb{R}^{d}}x ~ f_0(x,t) dx,\qquad e_0(t) = \int_{\mathbb{R}^d}\vert x \vert^2 f_0(x,t) dx,\nonumber \\
		&m_1(t) =\int_{\mathbb{R}^{d}}x ~ f_1(x,t) dx,\qquad e_1(t) =  \int_{\mathbb{R}^d}\vert x \vert^2 f_1(x,t) dx,
\end{align}
are the first two moments of the subpopulations $f_\lambda(x,t)$ for $\lambda \in \{0,1\}$ and 
\begin{equation}\label{eq:variance}
	V(t) = v_0(t) + v_1(t),
\end{equation}	
the sum of the variances of the subpopulations given by 
\begin{equation}\label{eq:variance_F_L}
	v_0(t) =  \int_{\RR^{d}} \Big \vert x - \frac{m_0}{\rho_0} \Big \vert^2 f_0(x,t) dx, \qquad 	v_1(t) =\int_{\RR^{d}} \Big \vert x -\frac{m_1}{\rho_1} \Big \vert^2 f_1(x,t) dx.
\end{equation}
\begin{remark}
	For the following computations it is helpful to have in mind that
	$$m(t) = \int_{\RR^{d}} x (f_0(x,t) + f_1(x,t)) dx, \qquad e(t) = \int_{\RR^{d}} |x|^2 (f_0(x,t) + f_1(x,t)) dx,$$
	but due to the nonlinearity
	$$ V(t) \ne \int_{\RR^{d}} |x - m(t)|^2 (f_0(x,t) + f_1(x,t)) dx. $$
\end{remark}
\begin{proposition}\label{prop:1}Let us assume the transitions have equilibrated, that is, $\rho_0 \equiv \rho_0^{\infty}$ and $\rho_1 \equiv \rho_1^{\infty}$. 
	Furthermore let $\mathcal{E}(x)$ positive and bounded for all $x\in\mathbb{R}^d$, in particular, there exist constants $\underline{\mathcal{E}},\overline{\mathcal{E}} >0$ such that
	\begin{equation}\label{eq:assumption1}
		\underline{\mathcal{E}}:=\inf_x \mathcal{E}(x) \leq \mathcal{E}(x) \leq \sup_x \mathcal{E}(x) :=\overline{\mathcal{E}},
	\end{equation}
	and define $\tilde{\sigma} = k \sigma_F^2 b_{\underline{\mathcal{E}}}$, with  $b_{\underline{\mathcal{E}}} = \exp(\alpha(\bar{\mathcal{E}}-\underline{\mathcal{E}})),$ where $k=d$ in the case of isotropic diffusion and $k=1$ in the case of anisotropic diffusion.
	If 
	\begin{equation}\label{eq:condition_1} 
		\nu_F = \nu_L, \qquad \nu_F > \max \left\lbrace \frac{\tilde{\sigma}}{2}, \frac{\rho_1}{2}\right\rbrace,
	\end{equation}
	it holds
	\begin{align}\label{eq:estimates}
			&\frac{d}{dt}m(t) = \nu_F(\hat{x}-m)(t),\nonumber \\
			& \frac{d}{dt}V(t) \leq (-2\nu_F+\tilde{\sigma}) V(t) + (\tilde{\sigma} \rho_0\rho_1-\pi_{LF} \rho_1 -\pi_{FL}\rho_0 ) \left( \frac{m_0(t)}{\rho_0} - \frac{m_1(t)}{\rho_1}\right) ^2.
	\end{align} 
	
\end{proposition}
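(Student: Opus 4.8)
The plan is to argue entirely on the mean-field system \eqref{eq:boltz_strong_lead_foll}, extracting the moment identities by testing its two equations against $\phi(x)=x$ and $\phi(x)=|x|^2$. Since \eqref{eq:boltz_strong_lead_foll} is already in the quasi-stationary regime, $\rho_0\equiv\rho_0^{\infty}$ and $\rho_1\equiv\rho_1^{\infty}$ are constants; one also uses $\rho_0+\rho_1=1$ and the balance relation $\pi_{FL}\rho_0^{\infty}=\pi_{LF}\rho_1^{\infty}$ that follows from \eqref{eq:stationary}. For the first moment, multiplying both equations of \eqref{eq:boltz_strong_lead_foll} by $x$ and integrating over $\RR^d$ makes the diffusion term of the follower equation disappear (because $\Delta_x x=0$), while integration by parts of the drifts gives $\nu_F(\rho_0 m_1/\rho_1-m_0)$ for the followers and $\nu_L(\hat x-m_1/\rho_1)$ for the leaders, and the transition operator contributes $\pi_{LF}m_1-\pi_{FL}m_0$ to $\dot m_0$ and its negative to $\dot m_1$. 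Adding the two, the transition parts cancel and, using $\nu_F=\nu_L$ and $\rho_0=1-\rho_1$, the drift parts collapse to $\nu_F(\hat x-m_0-m_1)=\nu_F(\hat x-m)$, which is the first line of \eqref{eq:estimates}.

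For the variances I test \eqref{eq:boltz_strong_lead_foll} against $\phi(x)=|x|^2$. Integration by parts now produces, for the followers, a drift contribution $2\nu_F(m_0\cdot m_1/\rho_1-e_0)$ and a diffusion contribution $k\sigma_F^2\int_{\RR^d}|\hat x-x|^2 f_0(x,t)\,dx$, with $k=d$ for the isotropic and $k=1$ for the anisotropic choice of $D(x)$ (since $\Delta_x|x|^2=2d$ whereas $\partial_{x_k x_k}|x|^2=2$ for each $k$); for the leaders a drift contribution $2(\nu_L/\rho_1)(\hat x\cdot m_1-e_1)$; and in both equations terms linear in $e_0,e_1$ and quadratic in $m_0,m_1$ from the transition operator. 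Since $v_\lambda=e_\lambda-|m_\lambda|^2/\rho_\lambda$ and $\rho_\lambda$ is constant, I compute $\dot v_\lambda=\dot e_\lambda-2 m_\lambda\cdot\dot m_\lambda/\rho_\lambda$ using the first-moment identities: the linear and quadratic drift pieces telescope into the dissipative terms $-2\nu_F v_0$ (in $\dot v_0$) and $-2(\nu_L/\rho_1)v_1$ (in $\dot v_1$).

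It remains to handle the diffusion term and the transition cross-terms. For the former I complete the square about the follower barycentre, $\int_{\RR^d}|\hat x-x|^2 f_0\,dx=v_0+\rho_0\,|\hat x-m_0/\rho_0|^2$, and bound the concentration term $|\hat x-m_0/\rho_0|^2$ by the Laplace-principle estimate used for CBO: $\hat x$ from \eqref{eq:x_tot} is the barycentre of the Gibbs-reweighted probability measure $\propto e^{-\alpha\mathcal E}g$, so Jensen's inequality bounds $|\hat x-\bar x|^2$ by the second moment of $x-\bar x$ under that measure, for any reference point $\bar x$, and the two-sided bound \eqref{eq:assumption1} replaces the reweighting factor by the constant $b_{\underline{\mathcal E}}=\exp(\alpha(\overline{\mathcal E}-\underline{\mathcal E}))$; combined with the splitting $g=f_0+f_1$ — which yields, after completing the square, identities such as $\int_{\RR^d}|x-m_0/\rho_0|^2 g\,dx=V+\rho_1(m_0/\rho_0-m_1/\rho_1)^2$ — this is what brings in the constant $\tilde\sigma=k\sigma_F^2 b_{\underline{\mathcal E}}$ multiplying $V$ and the between-barycentre weight $\tilde\sigma\rho_0\rho_1$ multiplying $(m_0/\rho_0-m_1/\rho_1)^2$ (here $b_{\underline{\mathcal E}}\ge1$ is used to absorb the residual $k\sigma_F^2 v_0$ into $\tilde\sigma V$). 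For the transition cross-terms one substitutes $e_\lambda=v_\lambda+|m_\lambda|^2/\rho_\lambda$: the $v_\lambda$-parts cancel on summing $\dot v_0+\dot v_1$, and, using $\pi_{FL}\rho_0=\pi_{LF}\rho_1$, the quadratic-in-$m$ parts reduce to a multiple of $(m_0/\rho_0-m_1/\rho_1)^2$ built from $\pi_{LF}\rho_1$ and $\pi_{FL}\rho_0$, matching the second summand in \eqref{eq:estimates}. Finally, $\nu_F>\tilde\sigma/2$ makes $-2\nu_F+\tilde\sigma<0$, and $\nu_F>\rho_1/2$ together with $\rho_1\le1$ lets one dominate the leader rate so that $-2(\nu_L/\rho_1)v_1\le(-2\nu_F+\tilde\sigma)v_1$; hence $v_0$ and $v_1$ carry the common rate and $V=v_0+v_1$ obeys the second line of \eqref{eq:estimates}.

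The genuinely delicate point is this third step: controlling $\int_{\RR^d}|\hat x-x|^2 f_0\,dx$ so that exactly the constants $\tilde\sigma$ and $\tilde\sigma\rho_0\rho_1$ appear. One has to keep track of three distinct ``centres'' — the follower barycentre $m_0/\rho_0$, the leader barycentre $m_1/\rho_1$, and the Gibbs-weighted consensus point $\hat x$ (which is built from the whole population $g$, not from $f_0$ alone) — and choose the reference points in the completing-the-square steps so that the cross terms cancel and the Laplace-principle bound applies cleanly; this is precisely where the boundedness hypothesis \eqref{eq:assumption1} on $\mathcal E$ enters. The remaining manipulations are bookkeeping, with the two inequalities in \eqref{eq:condition_1} used only to fix the signs of the assembled coefficients.
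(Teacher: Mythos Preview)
Your plan is essentially the paper's: test \eqref{eq:boltz_strong_lead_foll} against $x$ and $|x-M_\lambda|^2$ (with $M_\lambda=m_\lambda/\rho_\lambda$), decompose into drift, diffusion and transition contributions, and assemble. The first-moment identity, the drift contributions $-2\nu_F v_0$ and $-2(\nu_L/\rho_1)v_1$, and the reduction of the transition terms to a multiple of $(M_0-M_1)^2$ are handled the same way. (A minor remark: the cancellation of the $v_\lambda$-pieces in the transition part is purely algebraic once you test against $|x-M_\lambda|^2$; the stationarity relation $\pi_{FL}\rho_0=\pi_{LF}\rho_1$ is not actually needed there.)

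The one genuine gap is in the diffusion estimate. Completing the square about $M_0$ \emph{before} applying Jensen yields
\[
k\sigma_F^2\int_{\RR^d}|\hat x-x|^2 f_0\,dx \;\le\; k\sigma_F^2\, v_0 \;+\; \tilde\sigma\,\rho_0\bigl[V+\rho_1(M_0-M_1)^2\bigr],
\]
and your ``absorb $k\sigma_F^2 v_0$ into $\tilde\sigma V$'' step would require $k\sigma_F^2 v_0\le\tilde\sigma(1-\rho_0)V=\tilde\sigma\rho_1 V$. The hypothesis $b_{\underline{\mathcal E}}\ge 1$ only gives $k\sigma_F^2\le\tilde\sigma$; to close you would still need $v_0\le\rho_1 V$, i.e.\ $\rho_0 v_0\le\rho_1 v_1$, which is false in general. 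The paper avoids this by applying Jensen \emph{pointwise in $x$} first: for each fixed $x$,
\[
|\hat x-x|^2 = \Big|\int_{\RR^d}(y-x)\,\frac{e^{-\alpha\mathcal E(y)}g(y)}{\int e^{-\alpha\mathcal E}g}\,dy\Big|^2 \;\le\; b_{\underline{\mathcal E}}\int_{\RR^d}|y-x|^2 g(y)\,dy,
\]
and only then integrates against $f_0$; the resulting double integral is identified with the second moment of the full population $g$, giving directly $\tilde\sigma\bigl[V+\rho_0\rho_1(M_0-M_1)^2\bigr]$ with the stated prefactor. Reordering these two operations repairs your argument.
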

\begin{proof}
	Let us define for simplicity
	\begin{align}\label{eq:moments_F_L}
			M_\lambda(t) =  \frac{1}{\rho_\lambda}\int_{\mathbb{R}^{d}}x ~ &f_\lambda(x,t) dx,\qquad E_\lambda(t) =\frac{1}{\rho_\lambda} \int_{\mathbb{R}^d}\vert x \vert^2 f_\lambda(x,t) dx, \nonumber \\ &V_\lambda(t) = \frac{1}{\rho_\lambda} \int_{\RR^{d}} \Big \vert x -M_\lambda \Big \vert^2 f_\lambda(x,t) dx,
	\end{align}
	for any $\lambda \in \{0,1\}$	such that
	\begin{equation*}
		\begin{split}
			m(t) = \rho_0 M_0(t) &+ \rho_1 M_1(t),\qquad e(t) = \rho_0 E_0(t) + \rho_1 E_1(t),\\
			& V(t) = \rho_0 V_0(t) + \rho_1 V_1(t).
		\end{split}
	\end{equation*}
	We begin by computing the evolution of the first moments
	\begin{equation}\label{eq:proof_mean} 
		\frac{d}{dt}m(t) = \rho_0 \frac{d}{dt}M_0(t) + \rho_1 \frac{d}{dt} M_1(t).
	\end{equation}
	For the first term of \eqref{eq:proof_mean} we obtain  
	\begin{align}\label{eq:first_term_proof_mean}
			\rho_0 \frac{d}{dt} M_0(t) = & \int_{\RR^{d}} x \partial_{t} f_0 =\nonumber  \\
			& =\int_{\RR^{d}} x \Big[ -\pi_{LF} f_1 + \pi_{FL} f_0 + \Big.\nonumber \\& -\Big.\nabla_x \cdot \left( \nu_F (M_1-x) f_0 \right)+ \frac{\sigma_F^2}{2}  \Delta_x\left(  D^2(x) f_0\right) \Big] dx = \nonumber \\& = -\pi_{LF} \rho_1 M_1(t) + \pi_{FL} \rho_0 M_0(t)  + \rho_0 \nu_F (M_1-M_0)(t).
	\end{align}
	and for the second term in \eqref{eq:proof_mean}  it holds
	\begin{align}\label{eq:second_term_proof_mean}
			\rho_1 \frac{d}{dt} M_1(t) = & \int_{\RR^{d}} x \partial_{t} f_1 =\int_{\RR^{d}} x \Big[ \pi_{LF} f_1 - \pi_{FL} f_0  -\nabla_x \cdot \left( \frac{\nu_L}{ \rho_1} (\hat{x}-x) f_1 \right)\Big] dx = \nonumber \\& = \pi_{LF} \rho_1 M_1(t) - \pi_{FL} \rho_0 M_0(t)  +  \nu_L  (\hat{x}-M_1)(t).
			\end{align}
	Together this yields
	\begin{equation}
		\frac{d}{dt}m(t) = \nu_F \rho_0 (M_1-M_0)(t) +\nu_L  (\hat{x}-M_1)(t),
	\end{equation}
	and recalling the definition of $M_0(t)$ and $M_1(t)$ in \eqref{eq:moments_F_L} we get
	\begin{equation}
		\frac{d}{dt} m(t) = \nu_L \hat{x}(t) - \nu_F m(t) + \left( \nu_F \frac{\rho_0}{\rho_1} + \nu_F -\frac{\nu_L}{\rho_1} \right) m_1(t). 
	\end{equation}
	By the first assumption in \eqref{eq:condition_1} we can recover
	the first equation of the statement. 
	
	For $V(t)$ we have
	\begin{equation}\label{eq:proof_variance} 
		\frac{d}{dt} V (t) = \rho_0 \frac{d}{dt}V_0 (t)+ \rho_1 \frac{d}{dt}V_1 (t).
	\end{equation}
	We investigate the terms separately. First, we obtain 
	\begin{align}\label{eq:variance_V0} 
			\frac{d}{dt}V_0(t) = &\frac{1}{\rho_0} \frac{d}{dt} \int_{\RR^{d}} \vert x - M_0(t)\vert^2 df_0 =\nonumber \\& \underbrace{\frac{2}{\rho_0} \int_{\RR^{d}} \left( x-M_0(t), -\frac{d}{dt}M_0(t)\right) df_0}_{=: I_0} + \underbrace{\frac{1}{\rho_0} \int_{\RR^{d}} \vert x - M_0(t)\vert^2 \partial_t f_0}_{=: I_1}.
	\end{align}
	We note that $I_0$ vanishes, %indeed,
	%\begin{equation}
	%	\begin{split}
	%	I_0 = &\frac{2}{\rho_0} \int_{\RR^{d}} \left( x-M_0(t), -\nu_F(M_1-M_0)(t) + \frac{\pi_{LF}}{\rho_0}m_1-\frac{\pi_{FL}}{\rho_0}m_0 \right) df_0 = 0,
	%	\end{split}
	%\end{equation}
	%since $f_0/\rho_0$ is a probability measure, the second factor does not depend on $x$ and the first factor will produce zero. 
	since $2\rho_0^{-1} \int_{\RR^{d}} x - M_0(t) df_0 =0$.
	We divide $I_1$ into its drift, diffusion and transition parts to obtain
	\begin{equation}
		I_1 =: I_1^0 + I_1^1 + I_1^2, 
	\end{equation}
	with 
	\begin{align}
			I_1^0 = &\frac{1}{\rho_0} \int_{\RR^{d}} \vert x-M_0(t)\vert^2 \left(-\nu_F \nabla_x \cdot \left( \left( 	M_1(t)-x\right) f_0\right)\right)  dx 
			=\nonumber  \\& =\frac{2 \nu_F}{\rho_0} \int_{\RR^{d}} (x-M_0(t)) (M_1(t)-x) df_0 = \nonumber \\&= 2\nu_F \left( M_0(t) M_1(t) - E_0(t) - M_0(t)M_1(t) +M_0^2(t)\right)  = -2\nu_F V_0,
	\end{align}
	and, by an application of Jensen inequality we get
	\begin{align}
			I_1^1 = &\frac{1}{\rho_0}\int_{\RR^{d}} \vert x-M_0(t)\vert^2 \Delta_x\left(  \frac{\sigma_F^2}{2} D^2(x) f_0\right) dx =\nonumber \\& = \frac{\sigma_F^2}{2\rho_0} \int_{\RR^{d}} k \vert \hat{x}(t)-x\vert^2 df_0 = \nonumber\\& =\frac{\sigma_F^2}{2\rho_0} \int_{\RR^{d}} k 
			\int_{\RR^d} \Big \vert \frac{\int_{\RR^{d}}(y-x) e^{-\alpha \mathcal{E}(y)} g(y) dy}{\int_{\RR^{d}} e^{-\alpha \mathcal{E}(y)} g(y) dy}  \Big \vert^2 df_0 =\nonumber\\& \leq \frac{\tilde{\sigma}}{\rho_0} \int_{\RR^{2d}} \vert y-x \vert^2 g(y) f_0(x) dx dy =\nonumber \\ & = \frac{\tilde{\sigma}}{\rho_0} \left( \rho_0 E_0(t)+\rho_1 E_1(t) - (\rho_0 M_0(t) + \rho_1 M_1(t))^2\right) =\nonumber \\&= \frac{\tilde{\sigma}}{\rho_0} \left( V(t) + \rho_0\rho_1 \left( M_0(t)-M_1(t)\right) ^2\right),
	\end{align}
	and finally, 
	\begin{align}
			I_1^2 = &\frac{1}{\rho_0} \int_{\RR^{d}} \vert x-M_0(t)\vert^2 \left( -\pi_{LF}f_1 + \pi_{FL} f_0\right) dx = \nonumber\\& -\frac{\pi_{LF}}{\rho_0} \int_{\RR^{d}} \left( x^2+M_0^2(t)-2xM_0(t)\right) df_1 + \pi_{FL} V_0(t)  = \nonumber \\& -\frac{\pi_{LF}}{\rho_0}\left( \rho_1 V_1(t) + \rho_1 (M_0-M_1)^2(t)\right) +\pi_{FL}V_0(t),
	\end{align}
	where we add and subtract $\rho_1 M_1^2(t)$ in the last term of $I_1^2$.
	
	For $V_1(t)$  we have 
	\begin{align}\label{eq:variance_V1} 
			\frac{d}{dt}V_1(t) = &\frac{1}{\rho_1} \frac{d}{dt} \int_{\RR^{d}} \vert x - M_1(t)\vert^2 df_1 = \nonumber \\& \underbrace{\frac{2}{\rho_1} \int_{\RR^{d}} \left( x-M_1(t), -\frac{d}{dt}M_1(t)\right) df_0}_{=: I_2} + \underbrace{\frac{1}{\rho_1} \int_{\RR^{d}} \vert x - M_1(t)\vert^2 \partial_t f_1}_{=: I_3}.
		\end{align}
	Similarly to the case $I_0$ one can easily conclude that $I_2$ vanishes.
	%, indeed,
	%\begin{equation}
	%	\begin{split}
	%		I_2 = &\frac{2}{\rho_1} \int_{\RR^{d}} \left( x-M_1(t), -\frac{\nu_L}{\beta \rho_1}(\hat{x}-M_1)(t) - \frac{\pi_{LF}}{\rho_1}m_1+\frac{\pi_{FL}}{\rho_1}m_0 \right) df_1 = \\
	%		& -2\frac{\nu_L }{\beta \rho_1}\left( \hat{x}(t) M_1(t) -M_1^2(t)-\hat{x}(t)M_1(t)+M_1^2(t)\right) +\\
	%		& - 2 \left( \pi_{LF}M_1^2(t) -\pi_{LF} \frac{m_0(t) M_1(t)}{\rho_1} -\pi_{LF} M_1^2(t) +\pi_{LF} \frac{m_0(t) M_1(t)}{\rho_1}\right)  = 0.
	%	\end{split}
	%\end{equation}
	We divide $I_3$ into the drift and transition part to obtain
	\begin{equation}
		I_3 = I_3^0 + I_3^1, 
	\end{equation}
	with 
	\begin{align}
			I_3^0 = &\frac{1}{\rho_1} \int_{\RR^{d}} \vert x-M_1(t)\vert^2 \left(- \frac{\nu_L}{  \rho_1} \nabla_x \cdot \left( \left( 	\hat{x}(t)-x\right) f_1\right)\right)  dx 
			=\nonumber \\& =\frac{2 \nu_L}{ \rho^2_1} \int_{\RR^{d}} (x-M_1(t)) (\hat{x}(t)-x) df_1 = \nonumber\\&= \frac{2 \nu_L}{ \rho_1} \left( M_1(t) \hat{x}(t) - E_1(t) - M_1(t)\hat{x}(t) +M_1^2(t)\right)  = -2\frac{ \nu_L}{ \rho_1} V_1,
	\end{align}
	and 
	\begin{align}
			I_3^1 = &\frac{1}{\rho_1} \int_{\RR^{d}} \vert x-M_1(t)\vert^2 \left( \pi_{LF}f_1 - \pi_{FL} f_0\right) dx =\nonumber\\& =  -\frac{\pi_{FL}}{\rho_1} \int_{\RR^{d}} \left( x^2+M_1^2(t)-2xM_1(t)\right) df_0 + \pi_{LF} V_1(t)  = \nonumber\\& = -\frac{\pi_{FL}}{\rho_1}\left( \rho_0 V_0(t) + \rho_0 (M_0-M_1)^2(t)\right) +\pi_{LF}V_1(t),
	\end{align}
	where we add and subtract $\rho_0 M_0^2(t)$ in the last term of $I_3^2$.
	
	Altogether, we get
	\begin{align}\label{eq:proof_V} 
			\frac{d}{dt}V(t)\leq& \rho_1 \left( 2\nu_F +\rho_0-\frac{2\nu_F}{\rho_1} \right) V_0(t) +\left( -2\nu_F +\tilde{\sigma} \right) V(t) +\nonumber\\
			&+ \left( \tilde{\sigma} \rho_0 \rho_1 -\pi_{LF} \rho_1 -\pi_{FL} \rho_0\right) \left( M_0-M_1\right)^2(t).
	\end{align}
	Using the assumptions, we recover the second inequality in \eqref{eq:estimates}.
\end{proof}

\begin{corollary}\label{corollary_1}
	Let the assumptions of Proposition \ref{prop:1} hold, and in addition suppose that 
	\begin{equation}\label{eq:conditions_cor} 
		\nu_F >  \max \left\lbrace \frac{\pi_{LF}\rho_1}{\rho_0 (1-b_{\bar{\mathcal{E}}}\rho_1)}, \frac{\pi_{FL}}{b_{\bar{\mathcal{E}}}\rho_0}\right\rbrace,  \qquad \text{ with } b_{\bar{\mathcal{E}}} = e^{\alpha(\underline{\mathcal{E}}-\bar{\mathcal{E}})}.
	\end{equation}
	Then it holds
	\begin{equation}
		\Big \vert \frac{m_0(t)}{\rho_0}-\frac{m_1(t)}{\rho_1}\Big \vert^2 \to 0, \quad V(t)\to 0, \qquad \text{ as }   t \to \infty.
	\end{equation}
	
\end{corollary}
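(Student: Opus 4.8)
The plan is to pair the variance bound of Proposition~\ref{prop:1} with a matching differential inequality for the squared mean gap $D(t):=\bigl|m_0(t)/\rho_0-m_1(t)/\rho_1\bigr|^2=|M_0(t)-M_1(t)|^2$, and then to run a Lyapunov argument on a linear combination of $V$ and $D$. From Proposition~\ref{prop:1} we already have
\[
\tfrac{d}{dt}V(t)\le(-2\nu_F+\tilde\sigma)\,V(t)+b\,D(t),\qquad b:=\tilde\sigma\rho_0\rho_1-\pi_{LF}\rho_1-\pi_{FL}\rho_0,
\]
and the decay rate $-2\nu_F+\tilde\sigma$ is strictly negative by \eqref{eq:condition_1}, so it remains to produce an analogous estimate for $D$ and close the loop.

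To get the ODE for the mean gap, I would differentiate $d:=M_0-M_1$ using the identities \eqref{eq:first_term_proof_mean}--\eqref{eq:second_term_proof_mean} (with $\nu_F=\nu_L$); the $\rho$-weighted transition terms collapse because at the equilibrated fractions $\pi_{FL}\rho_0=\pi_{LF}\rho_1$, equivalently $\pi_{FL}/\rho_1=\pi_{LF}/\rho_0=\pi_{FL}+\pi_{LF}$. What survives is
\[
\tfrac{d}{dt}d(t)=(\pi_{LF}+\pi_{FL}-\nu_F)\,d(t)+\tfrac{\nu_F}{\rho_1}\bigl(M_1(t)-\hat x(t)\bigr),
\]
hence $\tfrac12\tfrac{d}{dt}D=(\pi_{LF}+\pi_{FL}-\nu_F)\,D+\tfrac{\nu_F}{\rho_1}\,d\cdot(M_1-\hat x)$. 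The crucial step is to estimate $d\cdot(M_1-\hat x)$ without losing the dissipation: a plain Jensen/Laplace bound $|M_1-\hat x|^2\lesssim b_{\underline{\mathcal{E}}}(V+\rho_0 D)$ would introduce the large factor $b_{\underline{\mathcal{E}}}=e^{\alpha(\bar{\mathcal{E}}-\underline{\mathcal{E}})}$ and overwhelm the $-\nu_F D$ term. Instead I would write $\hat x=\int x\,\mu_\alpha(dx)$ with $\mu_\alpha(dx)=Z^{-1}e^{-\alpha\mathcal{E}(x)}g(x)\,dx$, $Z=\int e^{-\alpha\mathcal{E}}g$, split $\mu_\alpha=\mu_\alpha^{(0)}+\mu_\alpha^{(1)}$ according to the two species, and on the follower part use $M_1-x=-d+(M_0-x)$. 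The contribution of $-d$ tested against $\mu_\alpha^{(0)}$ gives $-|d|^2\,\mu_\alpha^{(0)}(\RR^d)\le-b_{\bar{\mathcal{E}}}\rho_0\,|d|^2$ by the lower bound $Z^{-1}e^{-\alpha\mathcal{E}(x)}\ge b_{\bar{\mathcal{E}}}$, while the remaining pieces are controlled by Cauchy--Schwarz with $\int|x-M_\lambda|^2 f_\lambda=\rho_\lambda V_\lambda$ and the upper bound $Z^{-1}e^{-\alpha\mathcal{E}(x)}\le b_{\underline{\mathcal{E}}}$, giving cross terms of order $b_{\underline{\mathcal{E}}}\,|d|\sqrt{V_\lambda}$.

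Absorbing the cross terms by Young's inequality and using $V_\lambda\le V/\rho_\lambda$ then yields $\tfrac{d}{dt}D\le-cD+eV$ with explicit $c,e>0$; positivity of $c$ is exactly what the two bounds in \eqref{eq:conditions_cor} secure — after using $\pi_{FL}\rho_0=\pi_{LF}\rho_1$ the first reads $\nu_F(1-b_{\bar{\mathcal{E}}}\rho_1)>\pi_{FL}$ and the second $\nu_F>\pi_{FL}/(b_{\bar{\mathcal{E}}}\rho_0)$ — so that the dissipative $-b_{\bar{\mathcal{E}}}\rho_0 D$ dominates the $\pi$- and Young-type contributions. With $\tfrac{d}{dt}V\le-aV+bD$ and $\tfrac{d}{dt}D\le-cD+eV$, $a,c>0$, $b,e\ge0$, I would then pick $\gamma>0$ so that $\mathcal{L}(t):=V(t)+\gamma D(t)$ satisfies $\tfrac{d}{dt}\mathcal{L}\le-\mu\mathcal{L}$ for some $\mu>0$; by the comparison principle for this cooperative planar system such $\gamma,\mu$ exist precisely when $ac>be$, which \eqref{eq:conditions_cor} is tuned to enforce. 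Then $\mathcal{L}(t)\to0$ exponentially, hence $|m_0(t)/\rho_0-m_1(t)/\rho_1|^2\to0$ and $V(t)\to0$ as $t\to\infty$.

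The main obstacle is the estimate of $d\cdot(M_1-\hat x)$: one must split along the two species and use the two-sided control of the Gibbs weight so that the only genuinely large constant $b_{\underline{\mathcal{E}}}$ enters the coupling coefficients in a way that is still compensated by the forced growth of $\nu_F$ (recall \eqref{eq:condition_1} already demands $\nu_F\gtrsim\sigma_F^2 b_{\underline{\mathcal{E}}}$). Getting that balance to land exactly on the conditions \eqref{eq:conditions_cor}, rather than on a cruder sufficient condition, is the delicate part; everything else is routine moment computation and the comparison lemma for a $2\times2$ Metzler system.
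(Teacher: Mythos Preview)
Your route is genuinely different from the paper's, and the difference matters. The paper never couples $D=|M_0-M_1|^2$ back to $V$. Starting from the same identity
\[
\frac{d}{dt}d=(\pi_{FL}+\pi_{LF}-\nu_F)\,d+\frac{\nu_F}{\rho_1}(M_1-\hat x),
\]
it does \emph{not} estimate $M_1-\hat x$ via Cauchy--Schwarz against the Gibbs measure. Instead it uses the one-sided bound $\hat x\ge b_{\bar{\mathcal E}}\,m$ (equation \eqref{eq:estimate_xhat}) to replace $\hat x$ by $b_{\bar{\mathcal E}}(\rho_0 M_0+\rho_1 M_1)$, so that $\tfrac{d}{dt}D$ is bounded by $-2\nu_F D$ minus a quadratic form $2[M_0,M_1]^{\top}A[M_0,M_1]$ with
\[
A=\begin{pmatrix}\mathcal C_0 & -\mathcal C_0\\ -\mathcal C_1 & \mathcal C_1\end{pmatrix},\qquad
\mathcal C_0=\frac{\nu_F b_{\bar{\mathcal E}}\rho_0-\pi_{FL}}{\rho_1},\quad
\mathcal C_1=\frac{\nu_F\rho_0(1-b_{\bar{\mathcal E}}\rho_1)-\pi_{LF}\rho_1}{\rho_0\rho_1}.
\]
Condition \eqref{eq:conditions_cor} is precisely $\mathcal C_0,\mathcal C_1>0$, which makes $A$ weakly diagonally dominant and hence positive semidefinite; this yields $\tfrac{d}{dt}D\le-2\nu_F D$ outright, so $D(t)\le D(0)e^{-2\nu_F t}$ \emph{independently of $V$}. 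Then \eqref{eq:estimates} plus Gr{\"o}nwall gives $V(t)\le\bigl[V(0)+\mathcal C_v^0 D(0)\,t\bigr]e^{-(2\nu_F-\tilde\sigma)t}\to 0$. No Lyapunov combination, no $2\times 2$ Metzler system, no cross term to absorb.

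This is also where your plan has a real gap. By routing $d\cdot(M_1-\hat x)$ through Cauchy--Schwarz on the Gibbs weight, your off-diagonal coefficient $e$ in $\tfrac{d}{dt}D\le -cD+eV$ inevitably carries the large factor $b_{\underline{\mathcal E}}=e^{\alpha(\bar{\mathcal E}-\underline{\mathcal E})}$ (squared, after Young), while the favourable term you extract is only of size $b_{\bar{\mathcal E}}\rho_0$. The spectral condition $ac>be$ then requires $\nu_F$ large compared to powers of $b_{\underline{\mathcal E}}$, which is \emph{not} what \eqref{eq:conditions_cor} provides; those two inequalities involve only the small constant $b_{\bar{\mathcal E}}$ and were designed exactly to make $\mathcal C_0,\mathcal C_1>0$ in the paper's quadratic-form argument, not to beat a $b_{\underline{\mathcal E}}^2$--sized coupling. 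So the ``delicate part'' you flag is not just bookkeeping: under the stated hypotheses your coupled system will in general not close, whereas the paper avoids the issue entirely by keeping the $D$--inequality variance-free.
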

\begin{proof}
	Let us first study the behavior of $\vert M_0-M_1\vert^2(t)$. %, with $M_0(t)$, $M_1(t)$ defined as in \eqref{eq:moments_F_L}. 
	We have 
	\begin{align}\label{eq:derivative_M0M1}
			& \frac{d}{dt}  \vert M_0-M_1\vert^2(t) = 2 \left( M_0-M_1\right) (t) \frac{d}{dt} \left( M_0-M_1\right) (t) \leq \nonumber \\  
			&\leq -2\nu_F \vert M_0-M_1\vert^2(t) +2 (M_0-M_1)(t) \left( \mathcal{C}_{1} M_1(t) - \mathcal{C}_{0} M_0(t)\right) = \nonumber \\
			&=-2\nu_F \vert M_0-M_1\vert^2(t) -2 		\begin{bmatrix}
				M_0(t) \\ M_1(t)
			\end{bmatrix}^T
			\begin{bmatrix}
				\mathcal{C}_0 & -\mathcal{C}_0 \\
				-\mathcal{C}_1 & \mathcal{C}_1
			\end{bmatrix}
			\begin{bmatrix}
				M_0(t) \\ M_1(t)
			\end{bmatrix},
	\end{align}
	with
	\begin{equation}
		\mathcal{C}_0 = -\frac{\pi_{FL}-\nu_F \rho_0 b_{\bar{\mathcal{E}}}}{\rho_1},\qquad \mathcal{C}_1 = \frac{-\pi_{LF}\rho_1+\nu_F\rho_0(1-\rho_1 b_{\bar{\mathcal{E}}})}{\rho_0\rho_1},
	\end{equation}
	and we used equations \eqref{eq:first_term_proof_mean}-\eqref{eq:second_term_proof_mean} and the estimate
	\begin{equation}\label{eq:estimate_xhat}
		\hat{x}(t) = \frac{\int_{\mathbb{R}^d}x e^{-\alpha \mathcal{E}(x)}g(x,t)dx}{\int_{\mathbb{R}^d} e^{-\alpha \mathcal{E}(x)}g(x,t)dx} \geq \frac{e^{\alpha \underline{\mathcal{E}}}}{e^{\alpha \bar{\mathcal{E}}}} \int_{\mathbb{R}^{d}} x~ g(x,t) dx  :=  b_{\bar{\mathcal{E}}} ~ m(t).
	\end{equation}
	Note that, if condition \eqref{eq:conditions_cor} holds then $\mathcal{C}_0$, $\mathcal{C}_1>0$ and so
	\begin{equation}
		-2 		\begin{bmatrix}
			M_0(t) \\ M_1(t)
		\end{bmatrix}^T
		\begin{bmatrix}
			\mathcal{C}_0 & -\mathcal{C}_0 \\
			-\mathcal{C}_1 & \mathcal{C}_1
		\end{bmatrix}
		\begin{bmatrix}
			M_0(t) \\ M_1(t)
		\end{bmatrix}\leq 0,
	\end{equation}
	since the above matrix is weakly diagonal dominant and hence positive semidefinite.  
	Altogether, we obtain the estimate
	\begin{equation}\label{eq:derivative_M0M1_bis}
		\frac{d}{dt} \vert M_0-M_1\vert^2(t) \leq -2\nu_F\vert M_0-M_1\vert^2(t).
	\end{equation}   
	and an application of Gr{\"o}nwall lemma yields
	\begin{equation}\label{eq:sol_MOM1}
		\vert M_0-M_1 \vert^2(t) \leq 	\vert M_0-M_1 \vert^2(0) e^{-2\nu_F t},
	\end{equation}
	which allow us to conclude $\vert M_0-M_1 \vert^2(t) \to 0$ as $t\to \infty$. 
	\\
	\\
	In particular, this implies 
	\begin{equation}\label{eq:max_MOM1}
		\vert M_0-M_1 \vert^2(t) \leq \vert M_0-M_1 \vert^2(0), 
	\end{equation}
	which helps us to show the second statment. Indeed, we rewrite the second inequality in \eqref{eq:estimates} in integral form as \begin{equation}\label{eq:variance_integral_form}
		V(t) \leq V(0) + \mathcal{C}^0_v\vert M_0 -M_1\vert^2 (0) \int_{0}^{t}  ds- \mathcal{C}_v \int_{0}^{t}V(s)ds, 
	\end{equation} 
	with $\mathcal{C}^0_v =  \tilde{\sigma} \rho_0 \rho_1 -\pi_{LF} \rho_1 -\pi_{FL} \rho_0$ and $\mathcal{C}_v = 2\nu_F-\tilde{\sigma}$. Moreover, we note that \[
	t\to V(0)+\mathcal{C}^0_v \vert M_0 -M_1\vert^2 (0)t,
	\]
	is a non-decreasing function. Hence,
	again using Gr{\"o}nwall lemma, we get
	\begin{equation}\label{eq:sol_variance}
		V(t) \leq \Big[V(0)+\mathcal{C}^0_v \vert M_0 -M_1\vert^2 (0)t\Big] e^{-\mathcal{C}_vt},
	\end{equation}
	which implies $V(t)\to 0$ as $t\to \infty$. 
\end{proof}
The fact that $V(t)$ vanishes in the limit $t\rightarrow \infty$ allows us to conclude that the crowd concentrates. However, the position of the concentration point is unknown. This position is quantified in the following section.

\subsection{Convergence to the global minimum} \label{sec:convergence} 
In this section, we determine the conditions under which the mean value of the population is a reasonable approximation of the global minimizer.
%\[
%\lim_{t\to \infty} m(t) = B_\epsilon(\underline x), \quad\text{where}\quad \underline x = \argmin\limits_{x\in\RR^d} \mathcal E(x)
%\]
%holds, that is under which conditions the convergence to a reasonable approximation of the global minimizer is guaranteed.
\begin{proposition}	\label{prop:2}
	Suppose the assumptions of Proposition \ref{prop:1} hold. Further, we assume that $\mathcal{E}\in C^2(\mathbb{R}^d)$ and that there exist constants $c_1,c_2 >0$ such that
	\begin{equation}\label{eq:constants} 
		\sup_{y\in \mathbb{R}^2} \vert \nabla \mathcal{E}(y)\vert \leq c_1, \qquad 	\sup_{y\in \mathbb{R}^2} \vert \Delta \mathcal{E}(y)\vert \leq c_2,
	\end{equation} 
	and that the initial condition is well-prepared in the sense that the minimizer of $\mathcal E$ is in the support of the initial population and
	\begin{equation}\label{eq:assumption_mu} 
		\frac{\mu}{M_\alpha^2(0)}\leq \frac{3}{4},
	\end{equation}
	is satisfied  with 
	\begin{equation}
		M_\alpha(t) = \int_{\RR^{d}} e^{-\alpha \mathcal{E}(x)} g(x) dx,
	\end{equation}
	and
	\begin{align}\label{eq:mu}
			\mu =& 2 \alpha e^{-\alpha \underline{\mathcal{E}}} \left[ c_1 \sqrt{2} \left( \nu_F + \frac{\nu_F}{\rho_1}  \right) + c_2 \sigma_F^2 k  \right] \cdot \nonumber \\
			&\cdot \left[ \max \left\lbrace \frac{1}{\mathcal{C}_v} V(0)+ \gamma_m \left( \frac{\mathcal{C}_v^0}{\mathcal{C}_v^2} + \frac{\rho_0 \rho_1}{2\nu_F}\right), \frac{2}{\mathcal{C}_v} V(0) + \frac{4 \mathcal{C}^*}{\mathcal{C}_v} + \frac{\sqrt{\rho_0\rho_1\gamma_m}}{\nu_F} \right\rbrace  \right],
	\end{align}
	with 
	\[\mathcal{C}^0_v =  \tilde{\sigma} \rho_0 \rho_1 -\pi_{LF} \rho_1 -\pi_{FL} \rho_0,\qquad  \mathcal{C}_v = 2\nu_F-\tilde{\sigma}, \qquad 
	\gamma_m = \left( \frac{m_0(0)}{\rho_0}-\frac{m_1(0)}{\rho_1}\right) ^2,
	\]
	and	$\mathcal{C}^*$  is the maximal value of
	\begin{equation*}
		t\to e^{-\frac{\mathcal{C}_v}{4}t} \sqrt{\mathcal{C}_v^0 \gamma_m t}.
	\end{equation*}
	Then there exists $ \tilde{x} \in \R^d$ such that $m(t) \to \tilde{x}$ as $t \to \infty$ and
	\begin{equation}
		\mathcal{E}(\tilde{x}) = \underline{\mathcal{E}} + r(\alpha) + \frac{\log{2}}{\alpha},
	\end{equation}
	where $r(\alpha) = -\frac{1}{\alpha} \log (M_\alpha(0)) - \underline{\mathcal{E}}\to 0$,  as $\alpha \to \infty$. %, thanks to Laplace principle. 
\end{proposition}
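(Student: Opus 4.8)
The plan is to follow the convergence strategy established for consensus-based schemes (see \cite{carrillo2018analytical,fornasier2020consensus,benfenati2022binary}), adapted to the two-species dynamics: first establish a lower bound for the normalisation $M_\alpha(t)$ that is uniform in time; then deduce that $m(t)$ converges and that the whole population concentrates at the limit point $\tilde x$; finally identify $\mathcal{E}(\tilde x)$ via the Laplace principle \eqref{eq:laplace}. Throughout I would use the first line of \eqref{eq:estimates}, $\tfrac{d}{dt}m(t)=\nu_F(\hat x-m)(t)$, together with the algebraic identity (valid since $\rho_0+\rho_1=1$)
\begin{equation*}
\int_{\R^d}|x-m(t)|^2\, g(x,t)\,dx = V(t) + \rho_0\rho_1\Big|\tfrac{m_0(t)}{\rho_0}-\tfrac{m_1(t)}{\rho_1}\Big|^2,
\end{equation*}
whose right-hand side decays exponentially fast (up to a polynomial prefactor) by Corollary \ref{corollary_1} and the explicit bounds \eqref{eq:sol_MOM1}--\eqref{eq:sol_variance}. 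Combining this with a Cauchy--Schwarz estimate against the weight $e^{-\alpha\mathcal{E}}g$ and $e^{-\alpha\mathcal{E}}\le e^{-\alpha\underline{\mathcal{E}}}$ gives the a priori bound
\begin{equation*}
|\hat x(t)-m(t)|^2 \le \frac{e^{-\alpha\underline{\mathcal{E}}}}{M_\alpha(t)}\Big(V(t) + \rho_0\rho_1\Big|\tfrac{m_0(t)}{\rho_0}-\tfrac{m_1(t)}{\rho_1}\Big|^2\Big),
\end{equation*}
which is the bridge between the variance estimates and the dynamics of $m(t)$.

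\emph{Lower bound for $M_\alpha$.} This is the core. Summing the two equations in \eqref{eq:boltz_strong_lead_foll} the transition terms cancel, so $g=f_0+f_1$ solves $\partial_t g = \tfrac{\sigma_F^2}{2}\Delta_x[D^2(x)f_0] - \nu_F\nabla_x\cdot[(\tfrac{m_1}{\rho_1}-x)f_0] - \tfrac{\nu_L}{\rho_1}\nabla_x\cdot[(\hat x-x)f_1]$; testing against $e^{-\alpha\mathcal{E}}$ and integrating by parts (here $\mathcal{E}\in C^2$ and \eqref{eq:constants} are used to control $\nabla_x(e^{-\alpha\mathcal{E}})$ and $\Delta_x(e^{-\alpha\mathcal{E}})$) leads to an estimate of the form
\begin{equation*}
\Big|\tfrac{d}{dt}M_\alpha(t)\Big| \le \alpha\, e^{-\alpha\underline{\mathcal{E}}}\Big[\,A\,\big(V(t)+\rho_0\rho_1|M_0-M_1|^2(t)\big)^{1/2} + \tfrac{B}{M_\alpha(t)}\big(V(t)+\rho_0\rho_1|M_0-M_1|^2(t)\big)\,\Big],
\end{equation*}
where $M_\lambda=m_\lambda/\rho_\lambda$ and $A,B$ are explicit constants depending on $c_1,c_2,\sigma_F,\nu_F,\nu_L,\rho_0,\rho_1,k$ (the $M_\alpha(t)^{-1}$ comes from the factors $\hat x-m$ produced by the $f_1$-drift and by the quadratic dependence of $D^2(x)$ on $\hat x-x$, estimated by the a priori bound above). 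I would then run a continuity argument on the maximal interval where $M_\alpha(t)\ge\tfrac12M_\alpha(0)$: there one replaces $M_\alpha(t)^{-1}$ by $2M_\alpha(0)^{-1}$ and integrates the explicit decay estimates \eqref{eq:sol_MOM1}--\eqref{eq:sol_variance} in time; the two resulting families of time integrals, and the polynomial$\times$exponential terms, are precisely what enter the maximum and the constant $\mathcal{C}^\ast$ in \eqref{eq:mu}, so that the total variation of $M_\alpha$ over that interval is controlled by $\mu$. Assumption \eqref{eq:assumption_mu} then keeps $M_\alpha$ from ever reaching $\tfrac12M_\alpha(0)$, hence the interval is all of $[0,\infty)$ and $M_\alpha(t)\ge\tfrac12M_\alpha(0)$ for all $t\ge0$; in particular $M_\alpha(\infty):=\lim_{t\to\infty}M_\alpha(t)$ exists and $M_\alpha(\infty)\ge\tfrac12M_\alpha(0)$.

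\emph{Convergence and concentration.} With the lower bound in hand, the a priori estimate gives $|\hat x(t)-m(t)|\le \tfrac{\sqrt2\,e^{-\alpha\underline{\mathcal{E}}/2}}{\sqrt{M_\alpha(0)}}\big(V(t)+\rho_0\rho_1|M_0-M_1|^2(t)\big)^{1/2}$, which by Corollary \ref{corollary_1} is integrable on $[0,\infty)$; hence $m(t)=m(0)+\nu_F\int_0^t(\hat x-m)(s)\,ds$ converges to some $\tilde x\in\R^d$. Since $m(t)=\rho_0M_0(t)+\rho_1M_1(t)$ and $|M_0-M_1|(t)\to0$, both $M_0(t)$ and $M_1(t)$ converge to $\tilde x$; since $V(t)=\rho_0V_0(t)+\rho_1V_1(t)\to0$ with $V_\lambda\ge0$, also $V_\lambda(t)\to0$, so $\int|x-\tilde x|^2 g(x,t)\,dx=\sum_{\lambda}\rho_\lambda\big(|M_\lambda(t)-\tilde x|^2+V_\lambda(t)\big)\to0$. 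As $e^{-\alpha\mathcal{E}}$ is Lipschitz with constant $\alpha c_1 e^{-\alpha\underline{\mathcal{E}}}$ (by \eqref{eq:constants}), it follows that $|M_\alpha(t)-e^{-\alpha\mathcal{E}(\tilde x)}|\le\alpha c_1 e^{-\alpha\underline{\mathcal{E}}}\big(\int|x-\tilde x|^2 g(x,t)\,dx\big)^{1/2}\to0$, so $M_\alpha(\infty)=e^{-\alpha\mathcal{E}(\tilde x)}$.

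\emph{Conclusion and main obstacle.} Combining $e^{-\alpha\mathcal{E}(\tilde x)}=M_\alpha(\infty)\ge\tfrac12M_\alpha(0)$ yields $\mathcal{E}(\tilde x)\le -\tfrac1\alpha\log M_\alpha(0)+\tfrac{\log2}{\alpha}=\underline{\mathcal{E}}+r(\alpha)+\tfrac{\log2}{\alpha}$ with $r(\alpha)=-\tfrac1\alpha\log M_\alpha(0)-\underline{\mathcal{E}}$, and $r(\alpha)\to0$ as $\alpha\to\infty$ by the Laplace principle \eqref{eq:laplace} together with the well-preparedness assumption that the global minimizer lies in $\text{supp}\, g(\cdot,0)$, so that $\inf_{x\in\text{supp}\, g(\cdot,0)}\mathcal{E}(x)=\underline{\mathcal{E}}$; the trivial bound $\mathcal{E}(\tilde x)\ge\underline{\mathcal{E}}$ closes the two-sided control, and the stated equality is to be read in this sense (the gap vanishing as $\alpha\to\infty$). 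The main obstacle is the lower bound for $M_\alpha$: one must run the bootstrap cleanly despite $M_\alpha(t)^{-1}$ (equivalently $\hat x(t)-m(t)$) appearing inside the estimate for $\tfrac{d}{dt}M_\alpha$, and track constants carefully so that the accumulated time integrals coincide with the $\mu$ of \eqref{eq:mu} and assumption \eqref{eq:assumption_mu} closes the argument — it is exactly the polynomial$\times$exponential (rather than purely exponential) decay from Corollary \ref{corollary_1} that produces the term $\mathcal{C}^\ast$ and the particular shape of $\mu$. The integration-by-parts estimates and the final Laplace step are routine by comparison.
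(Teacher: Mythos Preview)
Your overall strategy matches the paper's --- lower-bound $M_\alpha(t)$, show $m(t)\to\tilde x$, pass to the limit in $M_\alpha$, and conclude via the Laplace principle --- but two technical choices differ and are worth flagging. First, the paper reverses the order of your first two steps: it proves $m(t)\to\tilde x$ \emph{before} touching $M_\alpha$, using only the trivial bound $M_\alpha(t)\ge e^{-\alpha\overline{\mathcal E}}$ (so that $e^{-\alpha\mathcal E(x)}/M_\alpha(t)\le b_{\underline{\mathcal E}}$) in place of your sharper a-priori inequality; this already gives $|\tfrac{d}{dt}m(t)|\le b_{\underline{\mathcal E}}\nu_F\sqrt2\,(V+\rho_0\rho_1|M_0-M_1|^2)^{1/2}$ with no $M_\alpha(t)^{-1}$ to manage, and integrability follows directly from Corollary~\ref{corollary_1}. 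Second, and more substantively, the paper \emph{avoids your bootstrap entirely} by differentiating $M_\alpha^2$ rather than $M_\alpha$: the drift and diffusion terms in $\tfrac{d}{dt}M_\alpha$ each pick up a factor $1/M_\alpha(t)$ (from estimating $\hat x$-dependent quantities via Jensen), but multiplication by the chain-rule factor $2M_\alpha(t)$ cancels them, so $\tfrac{d}{dt}M_\alpha^2(t)$ is bounded below by an expression free of $M_\alpha(t)$; one then integrates against the decay estimates \eqref{eq:sol_MOM1}--\eqref{eq:sol_variance} and invokes \eqref{eq:assumption_mu} to obtain $M_\alpha^2(t)\ge M_\alpha^2(0)-\mu\ge\tfrac14M_\alpha^2(0)$ in one stroke. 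Your continuity argument is correct and standard, but replacing $M_\alpha(t)^{-1}$ by $2/M_\alpha(0)$ on the bootstrap interval will not reproduce the constant $\mu$ of \eqref{eq:mu} on the nose --- the assumption \eqref{eq:assumption_mu} is tailored precisely to the $M_\alpha^2$ computation, and the squaring trick is what makes the stated constants come out as written.
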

\begin{proof}
	First, we show
	\begin{equation}
		\Big	\vert \frac{d}{dt}m(t) \Big \vert \to 0, 
	\end{equation}
	as $t\to \infty$. 
	To this end, we rewrite
	\begin{equation}\label{eq:proof_m_1} 
		\Big\vert \frac{d}{dt}m(t) \Big\vert =  \Big\vert\nu_F \int_{\RR^{d}} \left( \frac{e^{-\alpha \mathcal{E}(x)}}{M_\alpha(t)} -1 \right) x ~ g(x) dx \Big \vert,
	\end{equation}
	where we use the first estimate in \eqref{eq:estimates} and the definition of $\hat{x}(t)$. Applying Jensen inequality and using the estimate
	\begin{equation*}
		\hat{x}(t)\leq e^{-\alpha(\underline{\mathcal{E}}-\bar{\mathcal{E}})}\int_{\RR^{d}} x g(x,t) dx := b_{\underline{\mathcal{E}}} m(t),
	\end{equation*}
	we get
	\begin{align}\label{eq:proof_m_2}
			\Big\vert \frac{d}{dt}m(t) \Big\vert & = \frac{\nu_F}{M_\alpha(t)}\Big\vert \int_{\RR^{2d}} x e^{-\alpha\mathcal{E}(x)} g(x) g(x_*) dx dx_* - \int_{\RR^{2d}} x_* e^{-\alpha\mathcal{E}(x)} g(x) g(x_*) dx dx_* \Big \vert = \nonumber \\ 
			& = \frac{\nu_F}{M_\alpha(t)}\Big\vert \int_{\RR^{2d}} (x-x_*) e^{-\alpha\mathcal{E}(x)} g(x) g(x_*) dx dx_* \Big \vert \nonumber \\
			&\leq \frac{\nu_F}{M_\alpha(t)} \int_{\RR^{2d}} \vert x-x_* \vert e^{-\alpha\mathcal{E}(x)} g(x) g(x_*) dx dx_*\leq \nonumber \\
			& \leq b_{\underline{\mathcal{E}}} \nu_F \left( \int_{\RR^{2d}} \vert x-x_*\vert^2 g(x) g(x_*) dx dx_*\right) ^{1/2} = \nonumber \\
			& = b_{\underline{\mathcal{E}}} \nu_F \sqrt{2} \left( \rho_0 E_0(t)+\rho_1 E_1(t) - (\rho_0 M_0(t) + \rho_1 M_1(t))^2 \right)^{1/2} = \nonumber \\ 
			&=  b_{\underline{\mathcal{E}}} \nu_F \sqrt{2} \left( V(t) + \rho_0 \rho_1 (M_0-M_1)^2(t) \right)^{1/2} \to 0,\qquad \text{ as } t \to \infty,
	\end{align} 
	since both, $V(t)$ and $(M_0-M_1)^2(t)$, go to zero as $t\to \infty$. 
	Thus, there exists $\tilde{x}\in \RR^d $ such that 
	\begin{equation}\label{eq:xtilde}
		\tilde{x} = m(0) + \int_{0}^{t} \frac{d}{ds}m(s)ds = \lim_{t\to \infty} m(t).
	\end{equation}
	\\ 
	\\
	Let us now focus on the term $M_\alpha(t)$
	\begin{equation}\label{eq:proof_Malpha} 
		\frac{d}{dt} M^2_\alpha (t) = 2 M_\alpha(t) \frac{d}{dt} M_\alpha(t) =  2 M_\alpha(t) \int_{\RR^{d}} e^{-\alpha \mathcal{E}(x)} \partial_t g(x,t) dx,
	\end{equation}
	with 
	\begin{align}
			\partial_{t} g(x,t) &= \partial_t f_0(x,t) + \partial_{t} f_1(x,t) = 
			-\nu_F \nabla_x \cdot \Big[ (M_1 - x) f_0(x,t) \Big] +\nonumber \\ &+ \frac{\sigma_F^2}{2} \Delta_x \Big[D^2(x) f_0(x,t)\Big] - \frac{\nu_F}{\rho_1} \nabla_x \cdot \Big[ (\hat{x}(t)-x)f_1(x,t)\Big],
	\end{align}
	where we recall that we assume $\nu_L= \nu_F$ and
	\[\sum_{ \lambda_\in\{0,1\}} \mathcal{T}[f_\lambda](x,t) = 0.\] 
	
	We consider the terms separately to obtain 
	\begin{align}\label{eq:I_1}
			I_1 = &-\nu_F \int_{\RR^{d}} e^{-\alpha \mathcal{E}(x)} \nabla_x \cdot \Big[(M_1-x) f_0 \Big] dx = \nonumber\\
			& = -\nu_F \alpha \int_{\RR^{2d}} e^{-\alpha \mathcal{E}(x)} \nabla \mathcal{E}(x) (x_*-x) df_0 df_1\geq \nonumber \\
			&\geq -\nu_F \alpha e^{-\alpha \underline{\mathcal{E}}} c_1 \frac{M_\alpha(t)}{M_\alpha(t)} \int_{\RR^{2d}} \vert x_*-x \vert dg dg_* \geq \nonumber \\
			& \geq -\nu_F \alpha \frac{e^{-2\alpha \underline{\mathcal{E}}}}{M_\alpha(t)} c_1 \left( \int_{\RR^{2d}} \vert x_*-x \vert^2 dg dg_* \right)^{1/2}\geq \nonumber \\
			& \geq   -\nu_F \alpha \frac{e^{-2\alpha \underline{\mathcal{E}}}}{M_\alpha(t)} c_1 \sqrt{2} \left[V(t) + \rho_0 \rho_1 (M_0-M_1)^2(t)\right]^{1/2},
	\end{align}
	\begin{align}\label{eq:I_2}
			I_2 = & \frac{\sigma_F^2}{2} \int_{\RR^{d}} e^{-\alpha \mathcal{E}(x)} \Delta_x \Big[ D^2(x) f_0\Big]dx = \nonumber \\
			& =- \frac{\sigma_F^2}{2} \alpha \int_{\RR^{d}} e^{-\alpha \mathcal{E}(x)}  \Delta \mathcal{E}(x) k \vert \hat{x}(t) -x \vert^2 df_0  + \nonumber\\
			& +\frac{\sigma_F^2}{2}\alpha^2\int_{\RR^{2d}} e^{-\alpha \mathcal{E}(x)} \nabla_x \mathcal{E}(x) \otimes  \nabla_x \mathcal{E}(x)  k \vert \hat{x}(t) -x \vert^2 df_0  \geq\nonumber \\
			& \geq -\frac{\alpha \sigma_F^2}{2} k c_2 e^{-\alpha \underline{\mathcal{E}}} \int_{\RR^{d}} \vert \hat{x}(t)-x \vert ^2 dg \geq \nonumber\\
			& \geq  -\frac{\alpha \sigma_F^2}{2} k c_2 e^{-\alpha \underline{\mathcal{E}}} \int_{\RR^{2d}} \int \vert x_*-x\vert^2 \frac{e^{-\alpha \mathcal{E}(x_*)}}{M_\alpha(t)} dg dg_*\geq \nonumber\\
			&  \geq   -\frac{\alpha \sigma_F^2}{2} k c_2 \frac{e^{-2\alpha \underline{\mathcal{E}}}}{M_\alpha(t)} \int_{\RR^{2d}} \vert x_*-x \vert^2 dg dg_* \geq  \nonumber \\
			& \geq  -\frac{\alpha \sigma_F^2}{2} k c_2 \frac{e^{-2\alpha \underline{\mathcal{E}}}}{M_\alpha(t)} \Big[ V(t) + \rho_0 \rho_1 (M_0-M_1)^2(t)\Big],
	\end{align}
	and 
	\begin{align}\label{eq:I_3}
			I_3 = & -\frac{\nu_F}{\rho_1} \int_{\RR^{d}} e^{-\alpha \mathcal{E}(x)} \nabla_x \cdot \Big[(\hat{x}(t)-x) f_1 \Big] dx \geq \nonumber \\
			& \geq  -\frac{\alpha \nu_F}{\rho_1}  c_1 e^{-\alpha \underline{\mathcal{E}}} \int  \vert \hat{x}(t)-x \vert dg \geq \nonumber \\
			& \geq -\frac{\alpha \nu_F}{\rho_1}  c_1 \frac{e^{-2\alpha \underline{\mathcal{E}}}}{M_\alpha(t)} \left( \int_{\RR^{2d}} \vert x_* - x \vert ^2 dg dg_*\right)^{1/2}\geq \nonumber \\
			& \geq  -\frac{\alpha \nu_F}{\rho_1}  c_1 \frac{e^{-2\alpha \underline{\mathcal{E}}}}{M_\alpha(t)}\Big[ V(t) + \rho_0 \rho_1 (M_0-M_1)^2(t)\Big]^{1/2},
	\end{align}
	where we use assumption \eqref{eq:constants}, we integrate by parts, use Jensen inequality and the previous estimates. 
	Altogether, we estimate \eqref{eq:proof_Malpha} as follows 
	\begin{align}\label{eq:sol_Malpha}
			\frac{d M_\alpha(t)}{dt} &\geq -2\alpha e^{-2\alpha \underline{\mathcal{E}}} \Big[ c_1 \sqrt{2}\nu_F \left(1+ \frac{1}{\rho_1}\right) \left(V(t) + \rho_0 \rho_1 (M_0-M_1)^2(t) \right)^{1/2} + \Big. \nonumber \\ & + \Big.c_2 \sigma^2_F k   \left(V(t) + \rho_0 \rho_1 (M_0-M_1)^2(t) \right)\Big].
	\end{align}
	Using the estimates for the mean and variance in \eqref{eq:sol_MOM1}-\eqref{eq:sol_variance} and integrating equation \eqref{eq:sol_Malpha} we get
	\begin{align}\label{eq:M_alpha2} 
			M^2_\alpha(t)\geq& M^2_\alpha(0)-2\alpha e^{-\alpha \underline{\mathcal{E}}} \left[ c_1 \sqrt{2}\nu_F \left(1+ \frac{1}{\rho_1}\right) + c_2  \sigma_F^2k \right]\cdot \nonumber\\\cdot\max&\left\lbrace \int_{0}^t \left[  V(0) + \mathcal{C}_v^0\gamma_m s \right] e^{-\mathcal{C}_vs} + \rho_0\rho_1 \gamma_m e^{-2\nu_Fs}ds,\right.\nonumber \\
			&\left.\int_{0}^t\sqrt{ \left[  V(0) + \mathcal{C}_v^0\gamma_m s \right] e^{-\mathcal{C}_vs} + \rho_0\rho_1 \gamma_m e^{-2\nu_Fs} }ds\right\rbrace.
	\end{align}
	We integrate the first integral in \eqref{eq:M_alpha2} by parts to get
	\begin{equation*}
		\begin{split}
			\int_{0}^t \left[  V(0) + \mathcal{C}_v^0\gamma_m s \right] e^{-\mathcal{C}_vs} + \rho_0\rho_1 \gamma_m e^{-2\nu_Fs}ds\leq 
			\frac{V(0)}{\mathcal{C}_v} + \gamma_m \left( \frac{\mathcal{C}_v^0}{\mathcal{C}_v^2} + \frac{\rho_0\rho_1}{2\nu_F} \right).
		\end{split}
	\end{equation*}
	Moreover, applying H{\"o}lder inequality to the second integral in \eqref{eq:M_alpha2} yields
	\begin{equation*} 
		\begin{split}
			&\int_{0}^t\sqrt{ \left[  V(0) + \mathcal{C}_v^0\gamma_m s \right] e^{-\mathcal{C}_vs} + \rho_0\rho_1 \gamma_m e^{-2\nu_Fs} }ds\leq\\
			&\leq \frac{2\sqrt{V(0)}}{\mathcal{C}_v} + \Vert \sqrt{\mathcal{C}_v^0\gamma_m s} e^{\frac{-\mathcal{C}_v s}{4}}\Vert_\infty \int_0^t e^{\frac{-\mathcal{C}_v s}{4}} ds + \frac{\sqrt{\rho_0 \rho_1 \gamma_m}}{\nu_F}\leq \\
			&\leq \frac{2\sqrt{V(0)}}{\mathcal{C}_v} + \frac{4\mathcal{C}^*}{\mathcal{C}_v} + \frac{\sqrt{\rho_0 \rho_1 \gamma_m}}{\nu_F},
		\end{split}
	\end{equation*}
	where $\mathcal{C}^* := \max\limits_{s\in \RR} \sqrt{\mathcal{C}_v^0\gamma_m s} e^{\frac{-\mathcal{C}_v s}{4}}$ and
	we use the fact that 
	\[
	\sqrt{a+b}\leq \sqrt{a} + \sqrt{b},
	\]
	for and $a,b\geq 0$.
	Altogether, using assumption \eqref{eq:assumption_mu} we obtain
	\begin{equation}\label{eq:M_alpha3} 
		M^2_\alpha(t)\geq M^2_\alpha(0)- \mu \geq \frac{1}{4} M^2_\alpha(0),
	\end{equation}
	with $\mu$ defined as in equation \eqref{eq:mu}.
	Thus 
	\begin{equation}\label{eq:M_alpha4} 
		M_\alpha(t)\geq \frac{1}{2} M_\alpha(0).
	\end{equation}
	In addition, since $m(t) \to \tilde{x}$ and $V(t)\to 0 $ as $t\to \infty$ it holds, 
	\begin{equation}\label{eq:M_alpha5}
		M_\alpha(t) =  \int_{\RR^{d}}e^{-\alpha \mathcal{E}(x)}g(x) dx \to e^{-\alpha \mathcal{E}(\tilde{x})},\end{equation}
	as $t\to\infty$ as a consequence of Chebishev inequality (see \cite{carrillo2018analytical}). 
	Thus 
	\begin{equation}
		\begin{split}
			0\geq e^{-\alpha \mathcal{E}(\tilde{x})} \geq \frac{1}{2}M_\alpha(0) \quad \Longleftrightarrow \quad 0  \geq -\alpha \mathcal{E}(\tilde{x}) \geq \log \left( \frac{M_\alpha(0)}{2}\right), 
		\end{split}
	\end{equation}
	that is 
	\begin{equation}
		0\leq \mathcal{E}(\tilde{x}) \leq -\frac{1}{\alpha} \log (M_\alpha(0)) + \frac{\log(2)}{\alpha}.
	\end{equation}
	Finally, $0 \leq  \mathcal{E}(\tilde{x}) \leq \underline{\mathcal{E}}$ as $\alpha \to \infty$, since the first term tends to $\underline{\mathcal{E}}$ thanks to Laplace principle and $\log(2)/\alpha$ vanishes in the limit. 
\end{proof}
\begin{remark} We emphasize the following observations:
	\begin{itemize}
		\item In order to satisfy condition \eqref{eq:assumption_mu}, $V(0)$ and $m(0)$ need to be small. 
		\item  Note that if we assume to have anisotropic diffusion the convergence is guaranteed independently of the parameters choice and, in particular, of the dimension $d$. For this reason, all
		numerical examples of the next section consider the anisotropic noise. 
	\end{itemize}
\end{remark}

%%%%%%%%%%%%%%%%%%%%%%%%%%%%%%%%%%%%%%%%%%%%%%%%%%%%%%%%%%%%%%%%%%%%%%%%%%%%%%%%%%%%%%%%%%%%%%%%%%
%%%%%%%%%%%%%%%%%%%%%%%%%%%%%%%%%%%%%%%%%%%%%%%%%%%%%%%%%%%%%%%%%%%%%%%%%%%%%%%%%%%%%%%%%%%%%%%%%%%%%%%%%%%%%%%%%%%%%%%%%%%%%%%%%%%%%%%%%%%%%%%%%
%%%%%%%%%%%%%%%%%%%%%%%%%%%%%%%%%%%%%%%%%%%%%%%%
\section{Numerical methods} \label{sec:num_methods}
In order to approximate the time evolution of the density $f_\lambda(x,t)$ we sample $N_s$ particles $(x_i^0,\lambda_i^0), i=1,\dots,N_s$ from the initial distribution. We consider a time interval $[0, T]$ discretized
in $N_t$ intervals of length $h$.
The interaction step is solved by means of binary interaction algorithms, see \cite{pareschi2001introduction,pareschi2013interacting} for details.   

We denote the approximation of $f_\lambda(x,nh)$ at time $t^n$ by $f_\lambda^{n}(x)$. For any $\lambda \in \{0,1\}$ fixed, the next iterate is given by
\begin{equation}\label{eq:collision}
	f_\lambda^{n+1}(x) =\left( 1-\frac{h}{\varepsilon} \right)  f_\lambda^n(x) + \frac{h}{\varepsilon} \sum_{\lambda_*\in\{0,1\}} Q_\alpha^+(f_\lambda^n,f_{\lambda_*}^n )(x),
\end{equation} 
where $\varepsilon>0$ is a frequency parameter and $Q^+(f_\lambda^n,f_{\lambda_*}^n)$ is the gain part of the collision operator defined in \eqref{eq:collisional_op}.
Equation \eqref{eq:collision} can be interpreted as follows: with probability $1-h /\varepsilon$ an individual in position $x$  does not interact with other individuals and with probability $h /\varepsilon$ it interacts with another randomly selected individual. In the following we will assume $h = \varepsilon$.   

In order to simulate changes of the label $\lambda$, we discretize equation \eqref{eq:Ldef2}. For any fixed $x\in \RR^{d}$, we obtain
\begin{align}\label{eq:lambda_evolution}
		f_0^{n+1}(x) = (1-\varepsilon ~\pi_{F\to L})~ f_0^n(x) + \varepsilon ~\pi_{L\to F} ~f_1^n(x),\nonumber\\
		f_1^{n+1}(x) = (1-\varepsilon ~\pi_{L\to F})~ f_1^n(x) + \varepsilon ~\pi_{F\to L}~ f_0^n(x),
\end{align}
where $\pi_{F\to L}(\cdot)$ and $\pi_{L\to F}(\cdot)$ are the transition rates as defined in \eqref{eq:rates_test_0}-\eqref{eq:rates_test_1}. The details of the numerical scheme are summarized in Algorithm \ref{alg_binary}. Here, the parameters $\delta_{stall}$ and $j_{stall}$ are used to check if consensus has been reached in the last $j_{stall}$ iterations within a tolerance $\delta_{stall}$. In more detail, we stop the iteration if the distance of the current and previous mean $\hat x$ is smaller then the tolerance $\delta_{stall}$ for at least $j_{stall}$ iterations. In this case, the evolution is stopped before the total number of iterations has been reached. 
\\
\begin{alg}~[GKBO]\label{alg_binary}
	\begin{enumerate}
		\item[\texttt 1.] Draw $(x_i^0,\lambda_i^0)_{i=1,\dots,N_s}$ from the initial distribution $f^0_\lambda(x)$ and set $n=0$, $j=0$.
		\item[\texttt 2.] Compute $\hat{x}^0$ as in equation \eqref{eq:x_tot}.
		\item[\texttt 3.] \texttt{while} $n<N_t$ \texttt{and} $j<j_{stall}$
		\begin{enumerate}
			\item \texttt{for} $i=1$ \texttt{to}  $N$
			\begin{itemize}
				\item Select randomly a leader with position $y^{n}_k$, $k\neq i$.
				\item Compute the new positions
				\begin{align}\label{eq:bin_mod_GA_alg}
					x_i^{n+1} &= x_i^n +\nu_F \varepsilon \left(   y_k^{n+1}-x_i^{n}\right) + \sigma_F \sqrt{\varepsilon} D \xi  \left( 1-\lambda_i^n\right) + \varepsilon\nu_L (\hat{x}^{n}-x_i^n)\lambda_i^n, \nonumber \\
						y^{n+1}_k &= y^n_k + \nu_L \varepsilon (\hat{x}^{n}-y^n_k).
				\end{align}
				\item Compute the following probabilities rates 
				\[
				p_{L} =\varepsilon ~\pi_{F\to L}(x_i^{n+1},\lambda_i^n), \qquad 	p_{F}=\varepsilon ~\pi_{L\to F}(x_i^{n+1},\lambda_i^n). 
				\]  
				\item \texttt{if} $\lambda_i^n = 0$,\\  with probability $p_{L}$ agents $i$ becomes a leader: $\lambda_i^{n+1} = 1$.
				\item \texttt{if} $\lambda_i^n = 1$,\\  with probability $p_{F}$ agents $i$ becomes a follower: $\lambda_i^{n+1} = 0$.
			\end{itemize}
			\texttt{end for}
			\item Compute $\hat{x}^{n+1}$ as in equation \eqref{eq:x_tot}.
			\item  \texttt{if} $\Vert \hat{x}^{n+1}-\hat{x}^n\Vert_\infty\leq \delta_{stall}$
			\begin{enumerate}
				\item [] $j\leftarrow j+1$
			\end{enumerate}
			\texttt{end if} 
			\item [] $n\leftarrow n+1$
		\end{enumerate}
		\texttt{end while} 
	\end{enumerate}
\end{alg}
The above algorithm is inspired from Nanbu's method\cite{nanbu1980direct}, for larger class of direct simulation Monte-Carlo algorithm for interacting particle dynamics we refer to\cite{albi2013binary,pareschi2013interacting}. 

\section{Validation tests}\label{sec:validation} 
In this section we test the performance of the GKBO algorithm in terms of success rate and number of needed iterations. We consider the translated Rastrigin function with global minimum in $\bar{x} = 1$ for the vast majority of the tests. In the last experiment we compare the results for different benchmark functions (see \cite{jamil2013literature} for a complete list).  If not explicitly specified, we run $M=20$ simulations and, according to \cite{benfenati2022binary,carrillo2018analytical}, we consider a simulation successful if 
\begin{equation}\label{eq:success} 
	\Vert \hat{x}(t) - \bar{x}\Vert_\infty \leq 0.25.
\end{equation}
We set $\alpha = 5\cdot 10^6$ and we adopt the numerical trick described in \cite{fornasier2021consensus} to allow for arbitrary large values of $\alpha$. We assume $N=200$ and that agents are initially uniformly distributed in the hypercube $[-4.12,0]^d$, which does not contain the global minimum.  At time $t=0$ we suppose all agents are in the followers status and they change their label according to equation \eqref{eq:Ldef2}. For the GKBO algorithm we set the total percentage of leaders is $\rho_1^\infty=0.5$, if not specified explicitly. Hence, the transition rates are defined as in equation \eqref{eq:rates_test_0}, with $\pi_{LF} = \pi_{FL} = 0.2$, if the emergence of leaders is random or as defined in equation \eqref{eq:rates_test_1} if the labels change according to the weighted criterion defined in Section \ref{sec:leaders}. We will consider also a mixed strategy with $\bar{p} =0.5$, that is, among the total amount of generated leaders, $50\%$ change their labels according to the weighted strategy and the remaining ones change their labels randomly. We let the dynamics in \eqref{eq:bin_2pop} to evolve for $N_t=10000$ iterations with $\varepsilon=0.1$, where differently specified. We set $j_{stall} = 1000$, $\delta_{stall}= 10^{-4}$.  We assume $\nu_F=1$, $\nu_L= 10$ while the diffusion parameter and the dimension change in the different tests and will be specified later.

\subsection{Test 1: Comparison of different followers / leaders ratios}\label{sec:test1} 
Suppose $\sigma_F=4$, $d=20$. Table \ref{tab:rastrigin_NLvaries_succ_rate_iterations} reports the mean of the number of iterations and success rate (in parenthesis) for the GKBO algorithm tested on the translated Rastrigin function as the leaders mass at the equilibrium $\rho_1^\infty$, defined as in equation \eqref{eq:stationary}, varies. The success rate and number of iterations for the KBO algorithm are $1$ and $10000$ respectively. GKBO outperforms KBO in terms of the number of iterations. However, the success rate of GKBO with random leader emergence deteriorates for $\rho_1^\infty = 0.75$.

\begin{table}[ht]\centering
	{\caption{Mean of the number of iterations (success rate) for the GKBO algorithm tested on the translated Rastrigin function as the leaders mass at the equilibrium $\rho^\infty_1$ varies.}
		\begin{tabular}{cccc} 
			& GKBO random & GKBO $\bar{p}=0.5$ & GKBO weighted \\ 
			$\rho_1^\infty=0.25$ & 3008 (1) & 3588 (1) &  6421 (1)\\
			$\rho_1^\infty=0.5$ & 2898 (1)  &  3477 (1) & 6612 (1) \\
			$\rho_1^\infty=0.75$ & 4252 (0.6)  & 4566 (1)  & 7741 (1) \\
	\end{tabular}
	\label{tab:rastrigin_NLvaries_succ_rate_iterations}}
\end{table}

%%%%%%%%%%%%%%%%%%%%%%%%%%%%%%%%%%%%%%%%%%%%%
%%%%%%%%%%%%%%%%%%%%%%%%%%%%%%%%%%%%%%%%%%%%%%%
%%%%%%%%%%%%%%%%%%%%%%%%%%%%%%%%%%%%%%%%%%%%%%%

%%%%%%%%%%%%%%%%%%%%%%%%%%%%%%%%%%%%%%%%%%%%%
%%%%%%%%%%%%%%%%%%%%%%%%%%%%%%%%%%%%%%%%%%%%%%%
%%%%%%%%%%%%%%%%%%%%%%%%%%%%%%%%%%%%%%%%%%%%%%%
\subsection{Test 2: GKBO for different choices of $\hat{x}$. }\label{sec:test2}
%\paragraph{Random leaders generation} 
We compare the results of the algorithm with $\hat{x}$ as in \eqref{eq:x_tot} and slight modifications given by 
\begin{equation}\label{eq:x_tot_F} 
	\hat{x}_F =  \frac{\int_{\mathbb{R}^d}x e^{-\alpha \mathcal{E}(x)}f_0(x,t)dx}{\int_{\mathbb{R}^d} e^{-\alpha \mathcal{E}(x)}f_0(x,t)dx}, \qquad \text{or} \qquad
	%\end{equation}
	%either dependent just on the followers density or 
	%\begin{equation}\label{eq:x_tot_L} 
	\hat{x}_L =  \frac{\int_{\mathbb{R}^d}x e^{-\alpha \mathcal{E}(x)}f_1(x,t)dx}{\int_{\mathbb{R}^d} e^{-\alpha \mathcal{E}(x)}f_1(x,t)dx},
\end{equation}
which corresponds to the cases where the weighted mean depends only on the followers or only on the leaders, respectively.

In Figure \ref{fig:xhat_varies} the success rate and number of iterations as $\sigma_F$ and $d$ varies for $\hat{x}$ (left), $\hat x_f$ (middle) and $\hat x_L$(right). In the first row, results for the case with random leaders generation are shown, in the second row the mixed leaders generation with $\bar{p}=0.5$ and in the third row the case with weighted leaders generation. Note that the performance of the random strategy, especially for large values of the dimension $d$ is higher if $\hat{x}_F(t)$ is used for the estimate of the global minimizer. This can be explained by a better exploration phase of the particles during the evolution, whereas the leaders position $\hat x _L$ may result in a less accurate estimate, since labels change randomly. The weighted strategy with $\hat{x}_L(t)$  has computational advantages  since leaders are chosen to be the agents with optimal position and the computation of the $\hat{x}_L(t)$ requires a lower number of evaluations of the cost function. This may be advantageous in particular if the evaluation of the cost function is numerically expensive.
\begin{figure}[!htb]
	\centering
	\includegraphics[width=0.327\linewidth]{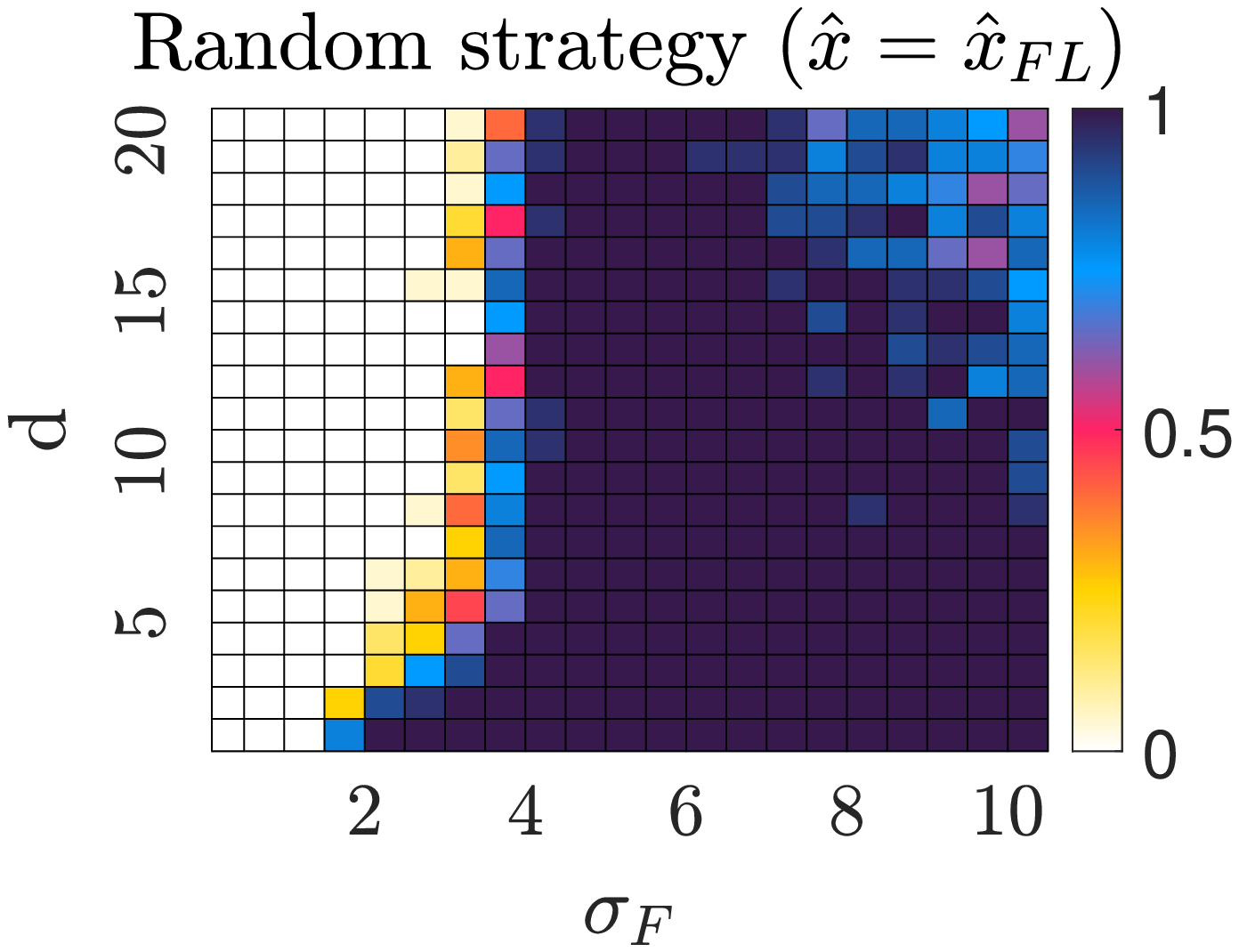}
	\includegraphics[width=0.327\linewidth]{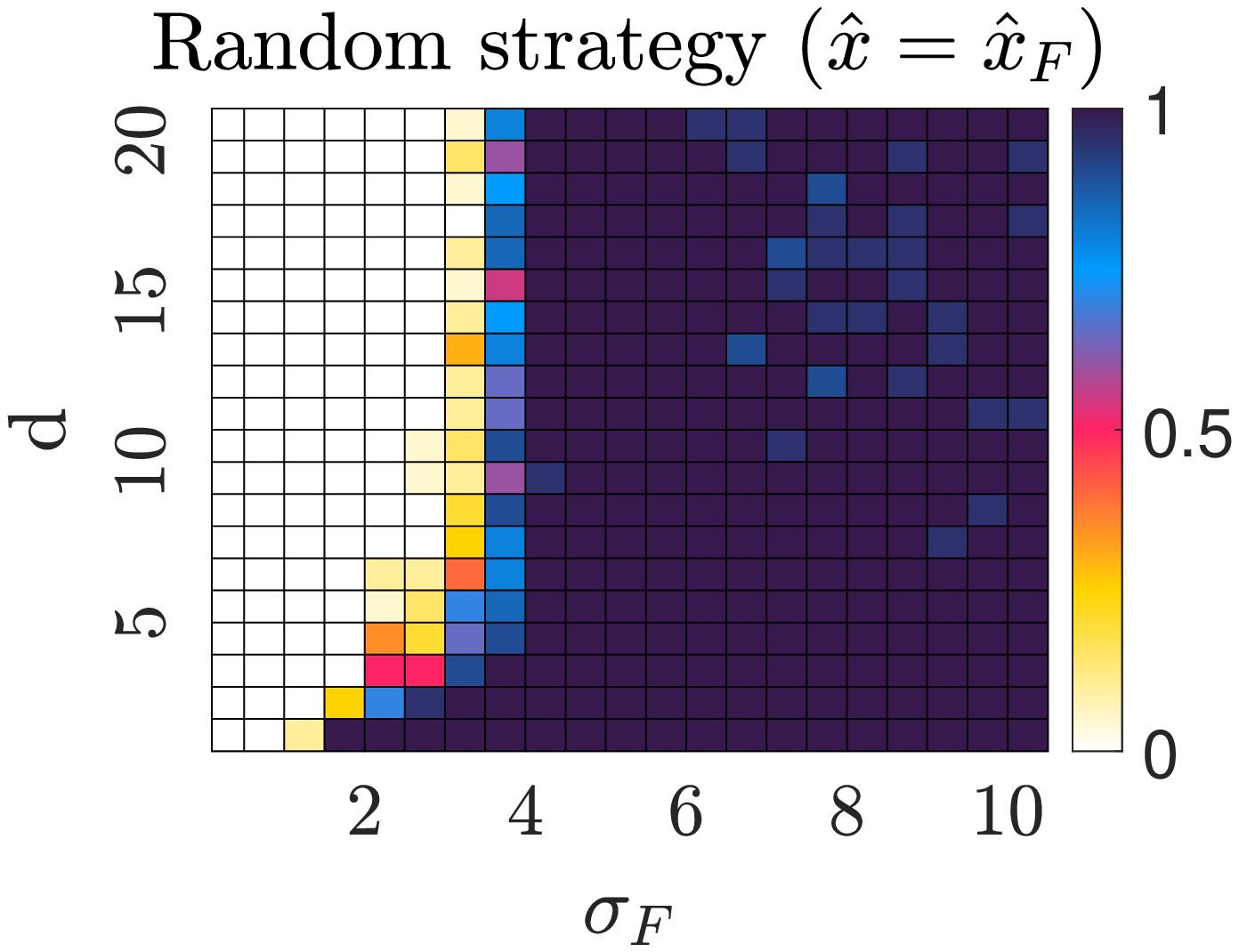}
	\includegraphics[width=0.327\linewidth]{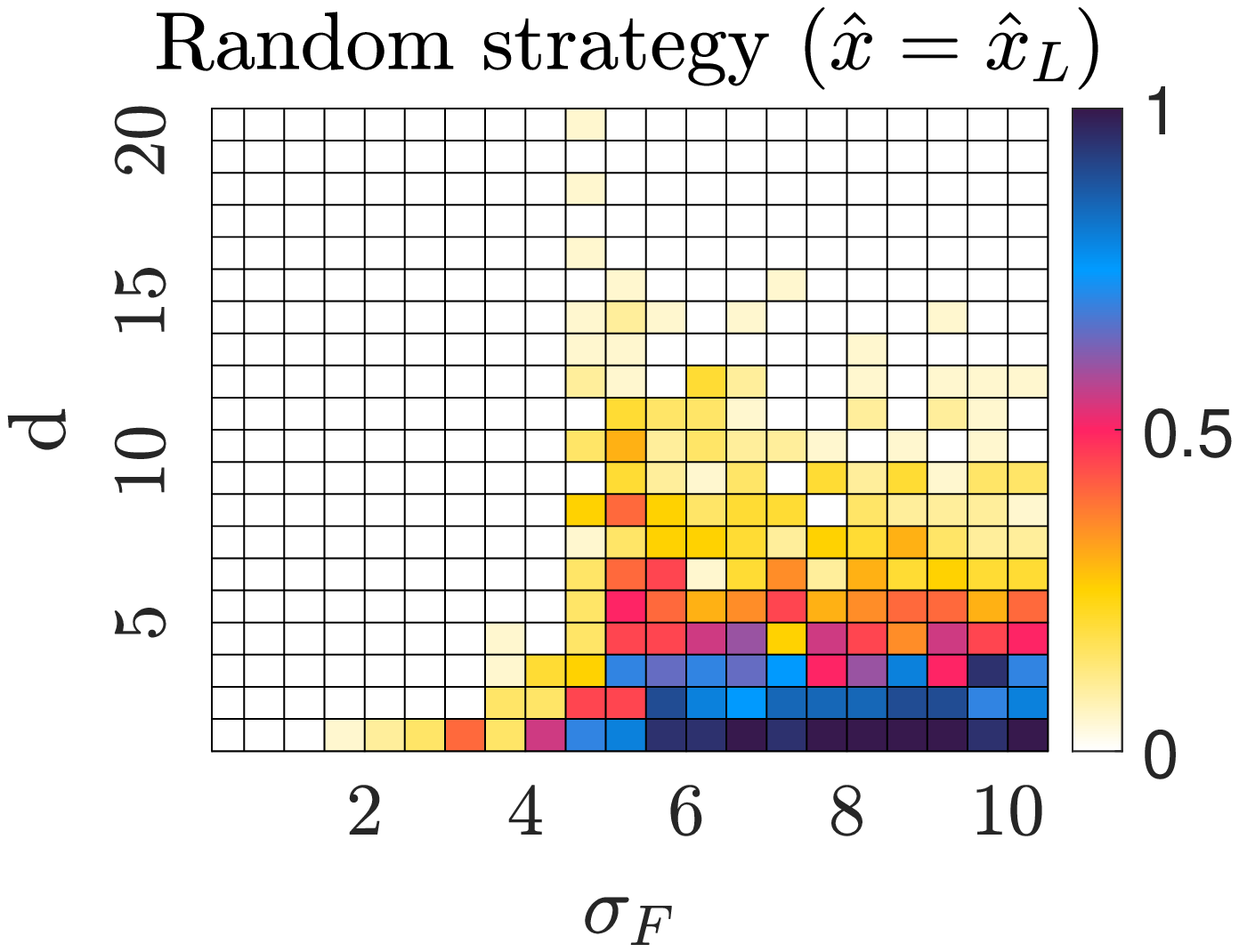}\\
	\includegraphics[width=0.327\linewidth]{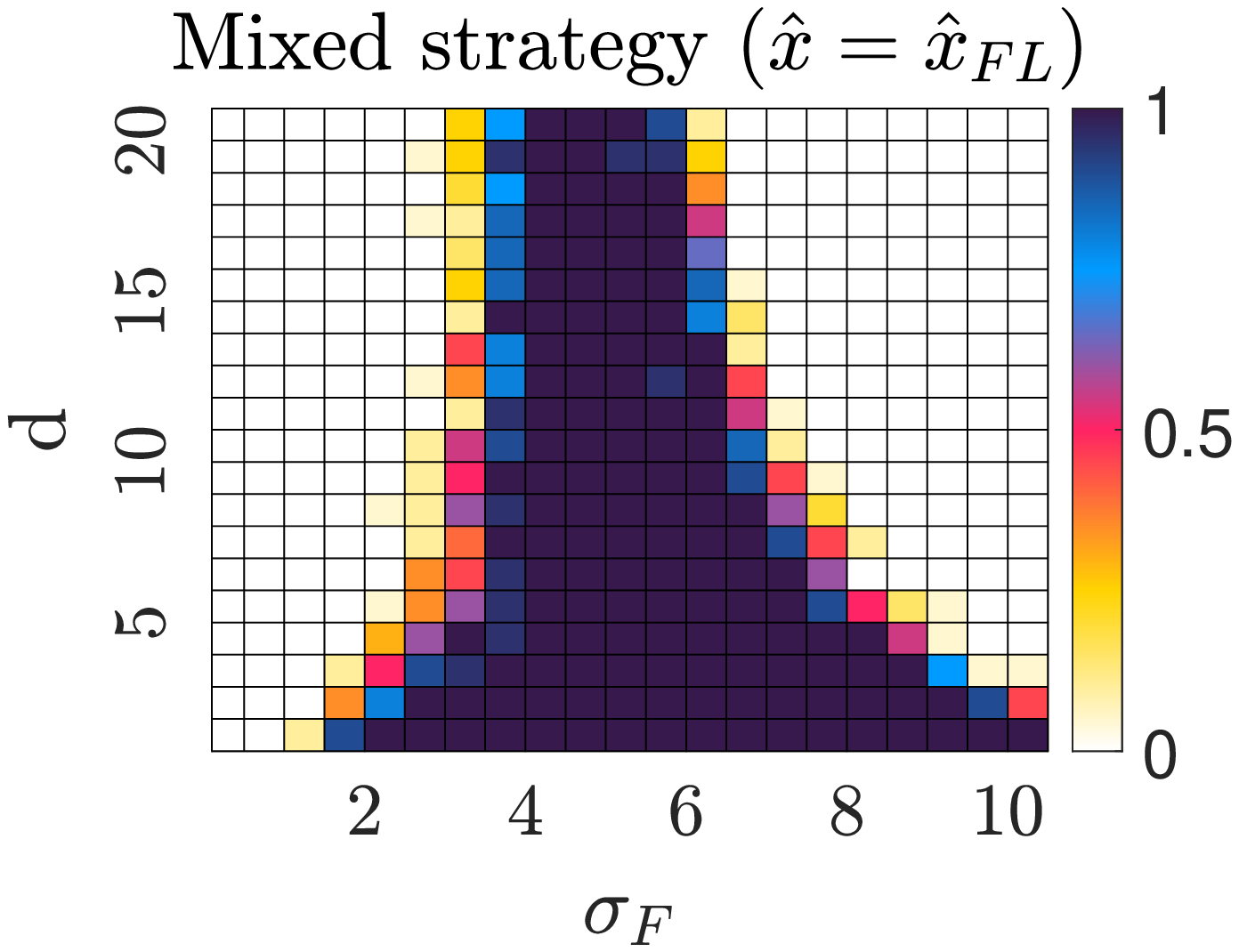}
	\includegraphics[width=0.327\linewidth]{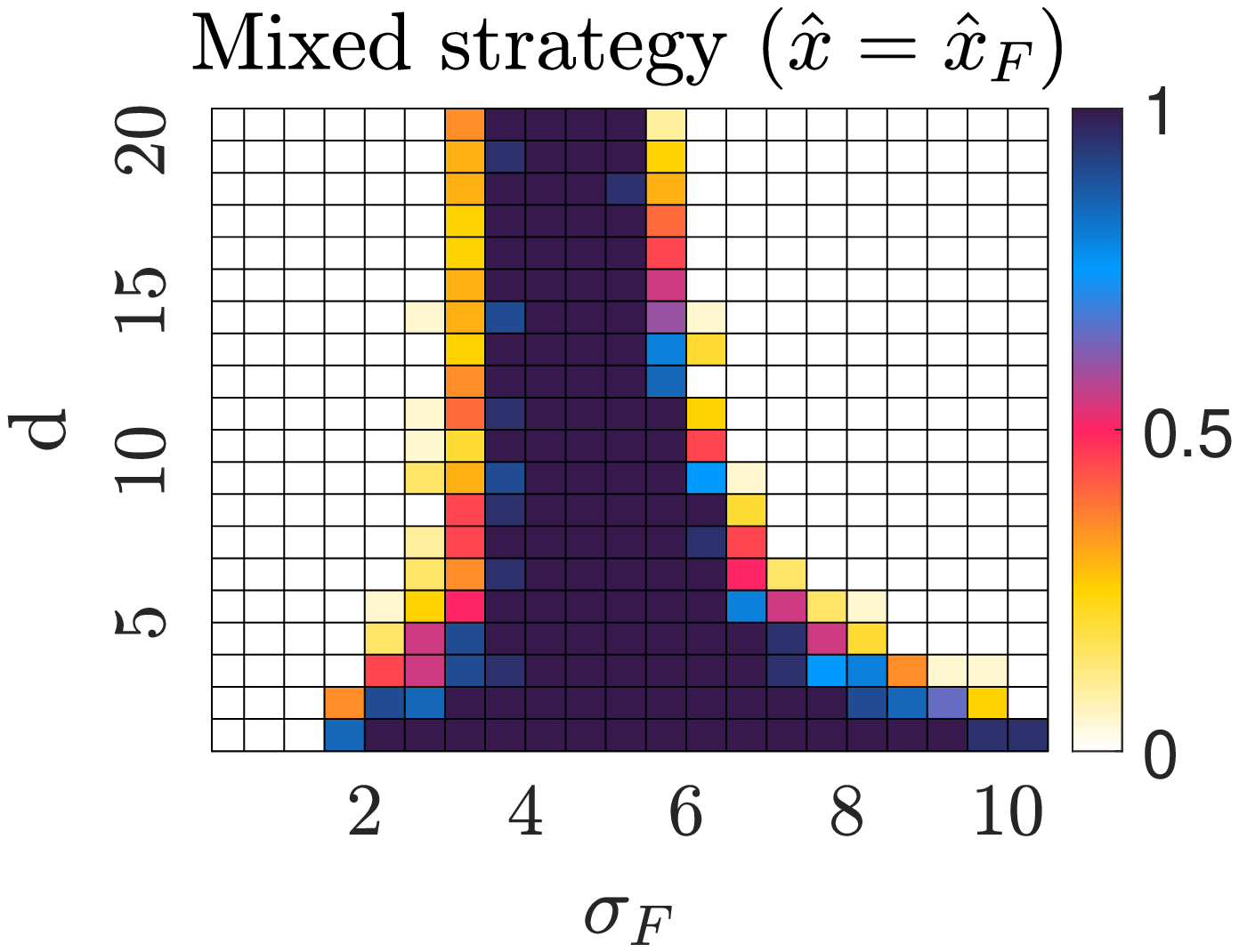}
	\includegraphics[width=0.327\linewidth]{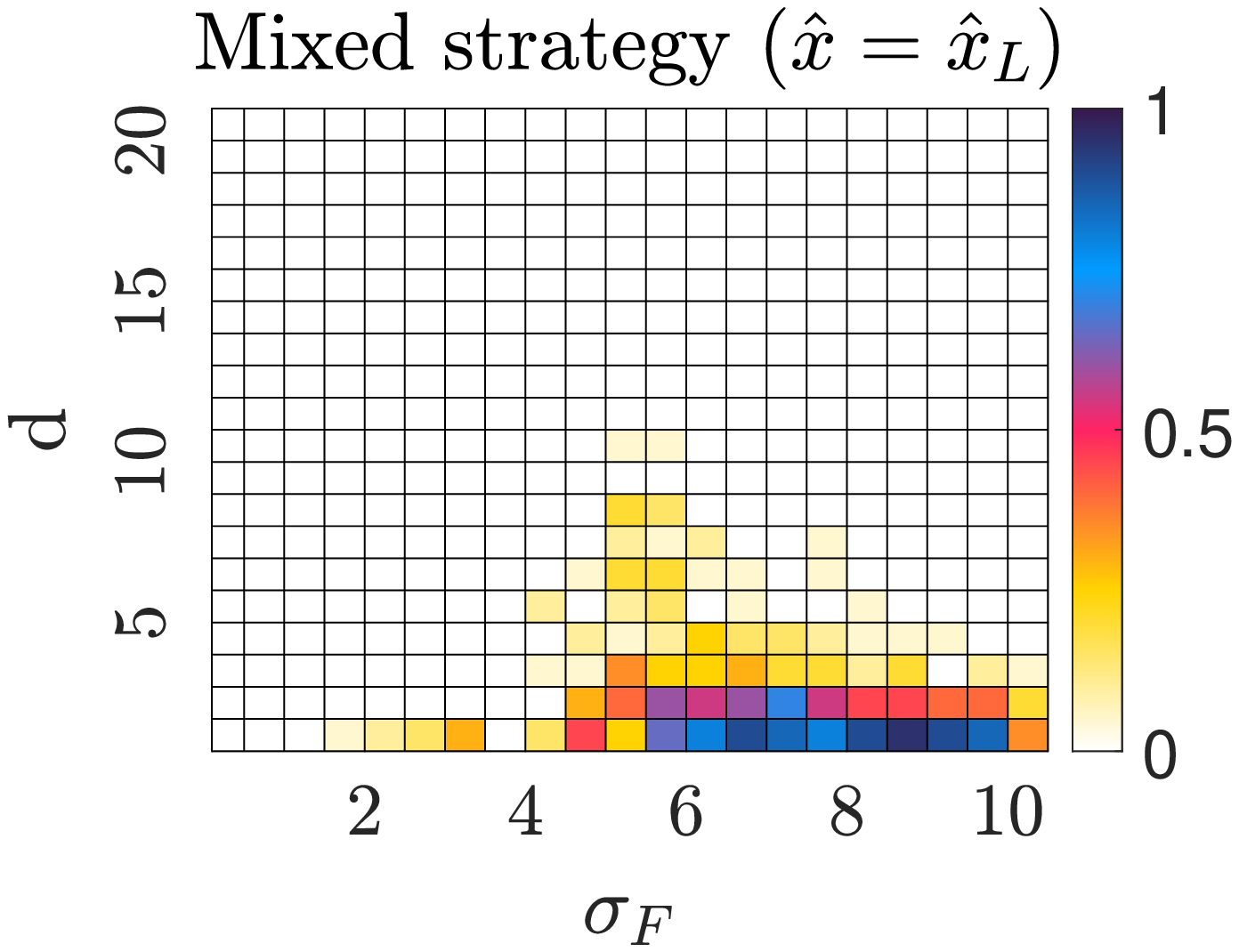}\\
	\includegraphics[width=0.327\linewidth]{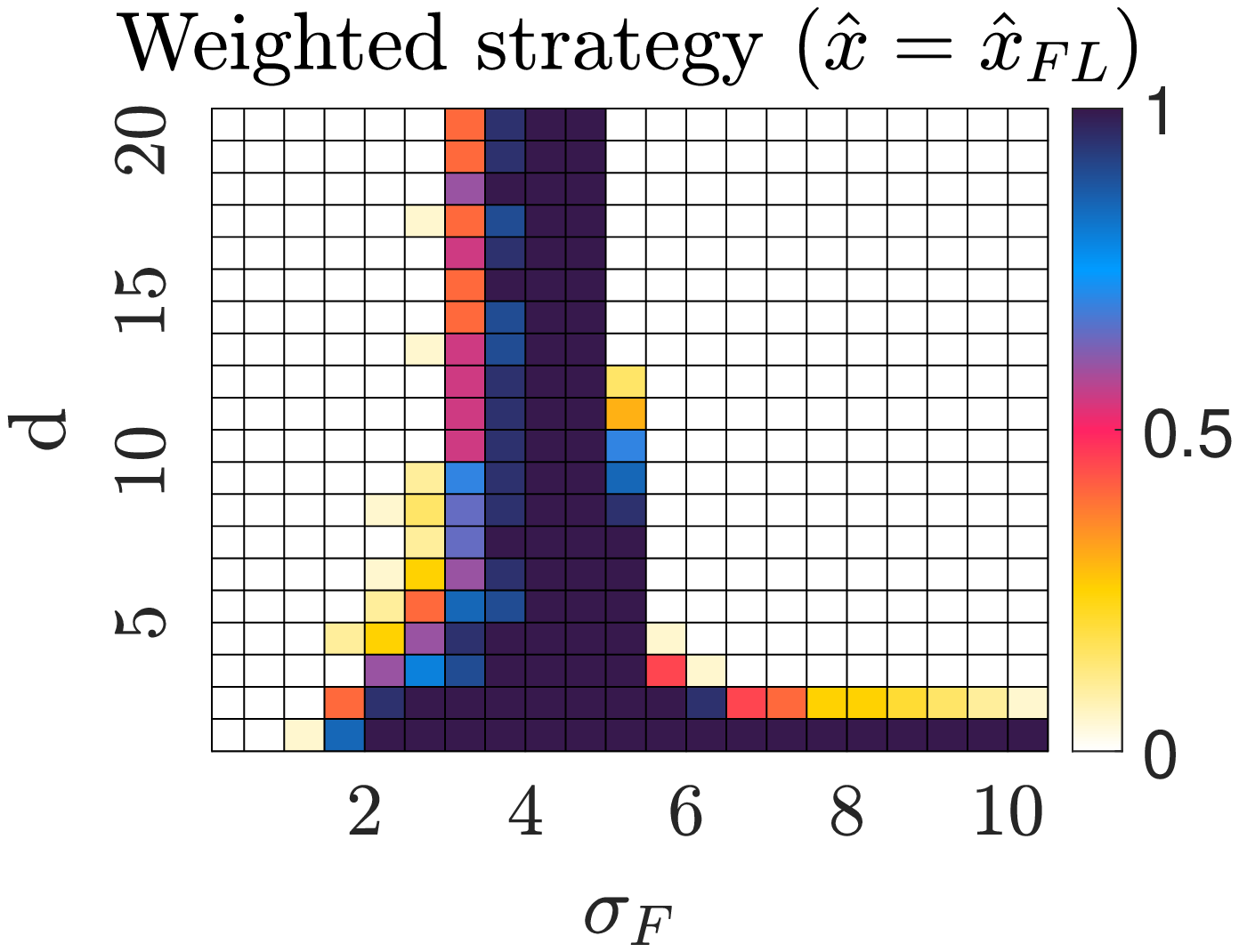}
	\includegraphics[width=0.327\linewidth]{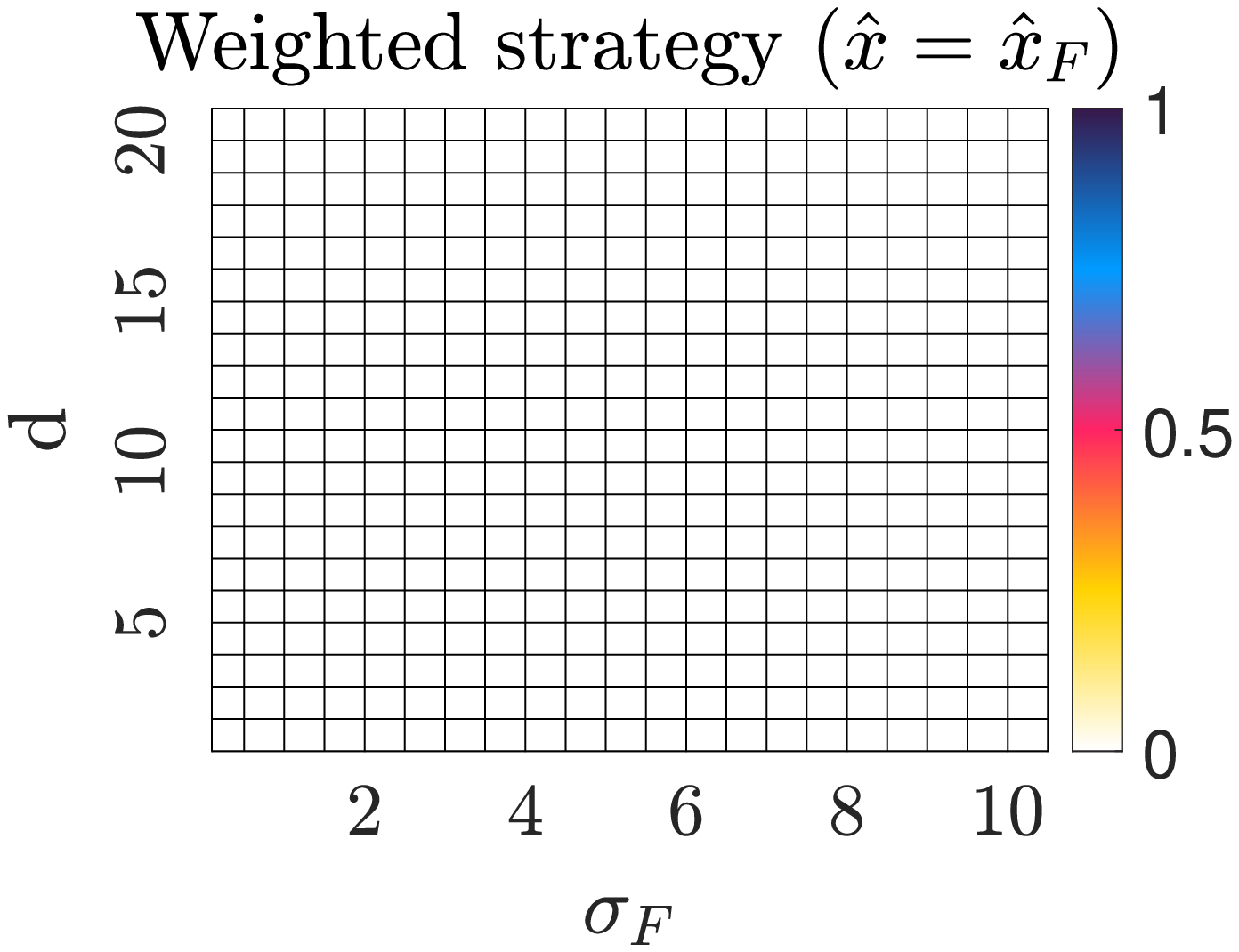}
	\includegraphics[width=0.327\linewidth]{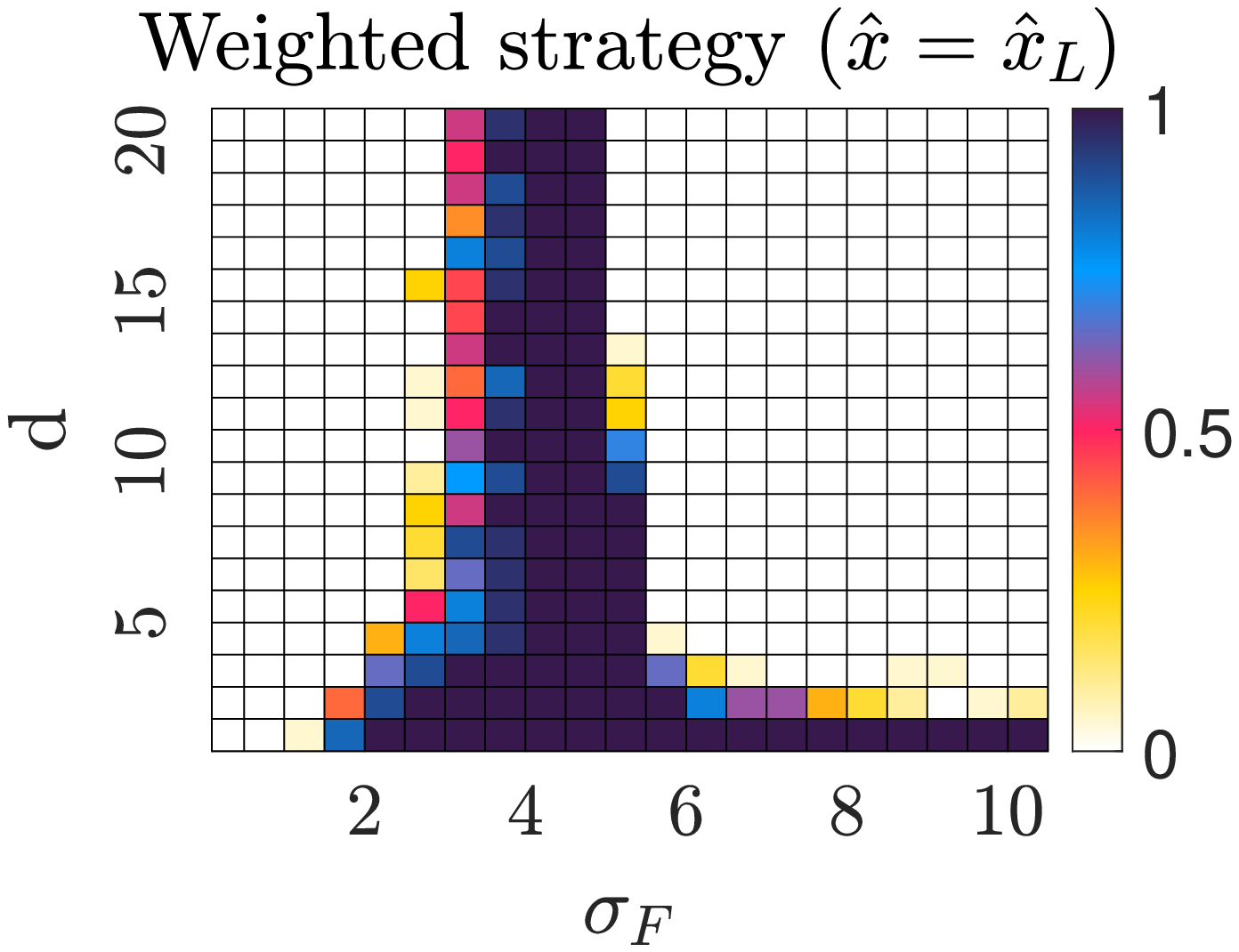}
	\vspace*{8pt}
	\caption{Success rates for varying $\sigma_F$ and $d$ for the translated Rastrigin function with dynamics simulated with the GKBO method for $\hat{x}$ (left), $\hat{x}_F$ (middle), $\hat x_L$ (right). The first row is with random leader emergence, second row with mixed strategy $\bar{p} = 0.5$ and third row with weighted leader emergence. 
	}
	\label{fig:xhat_varies}
\end{figure}

\subsection{Test 3: Comparison in $d=20$ dimensions for varying $\sigma_F$ } \label{sec:test3}
We fix $d=20$ and let $\sigma_F$ vary from $\sigma_F=0.1$ to $\sigma_F = 10$ to compare the performance of GKBO (equation \eqref{eq:bin_2pop}), standard GA (equation \eqref{eq:GA}-\eqref{eq:GA_1}), the modified GA (equation \eqref{eq:GA}-\eqref{eq:GA_mod}) and the KBO (equation \eqref{eq:bin_1pop}).

In Figure \ref{fig:sigma_range_in0_p01} the success rates and means of the number of iterations obtained with the different algorithms in the case of the translated Rastrigin function is shown. Here, test GKBO with $\hat{x}$, $\hat{x}_F$ and $\hat{x}_L$ as defined above and study random leader emergence (left), mixed leader emergence with $\bar{p}=0.5$ (middle), and weighted leader emergence (right). Altough the success rates of KBO and the variants of GKBO behave similar, the GKBO versions required less iterations. Moreover, we remark that the behavior of the GKBO with weighted leaders generation and with $\hat{x}$ as in equation \eqref{eq:x_tot} and of the KBO is similar, as expected from our analysis.
\begin{figure}[!htb]
	\centering
	\includegraphics[width=0.327\linewidth]{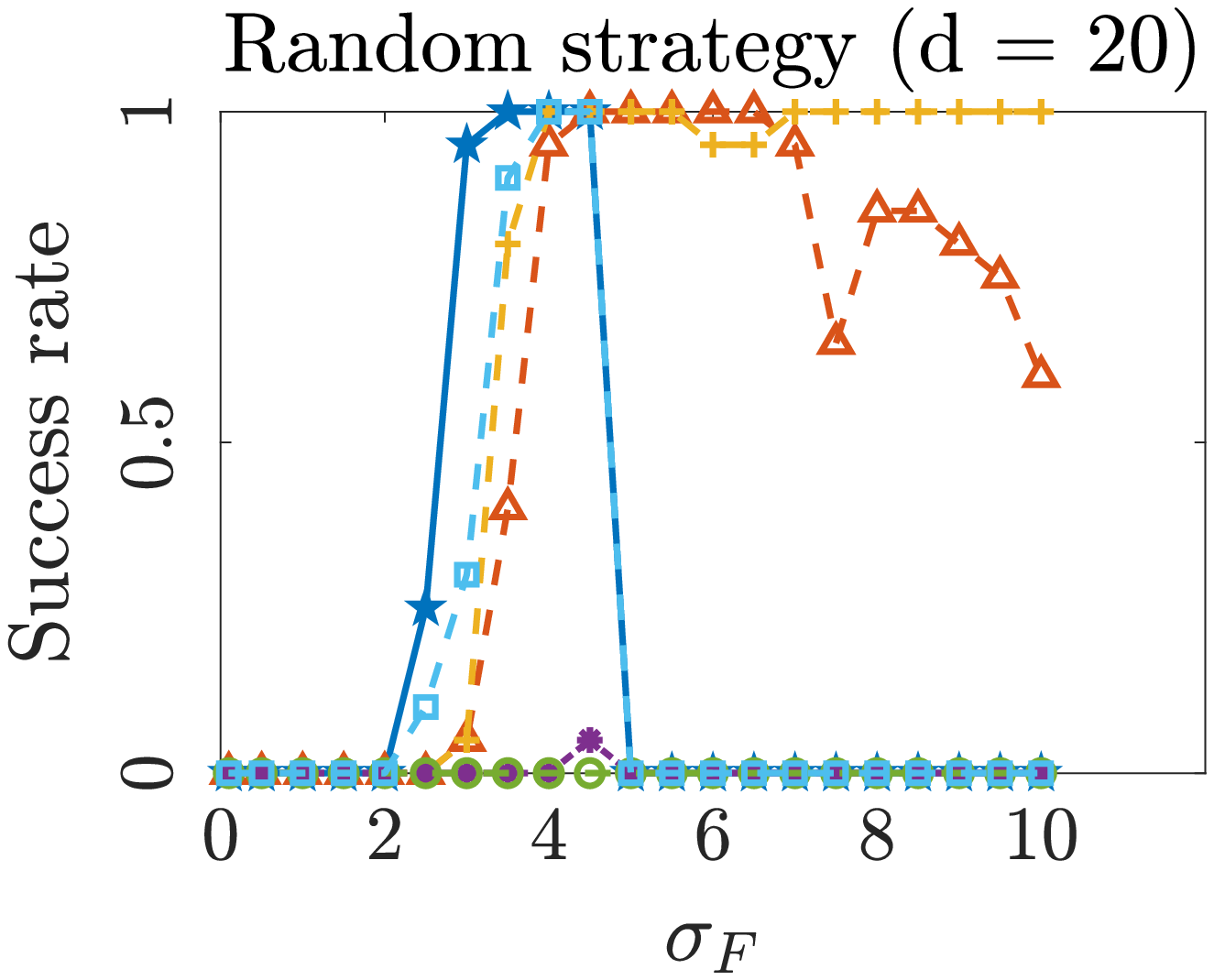}
	\includegraphics[width=0.327\linewidth]{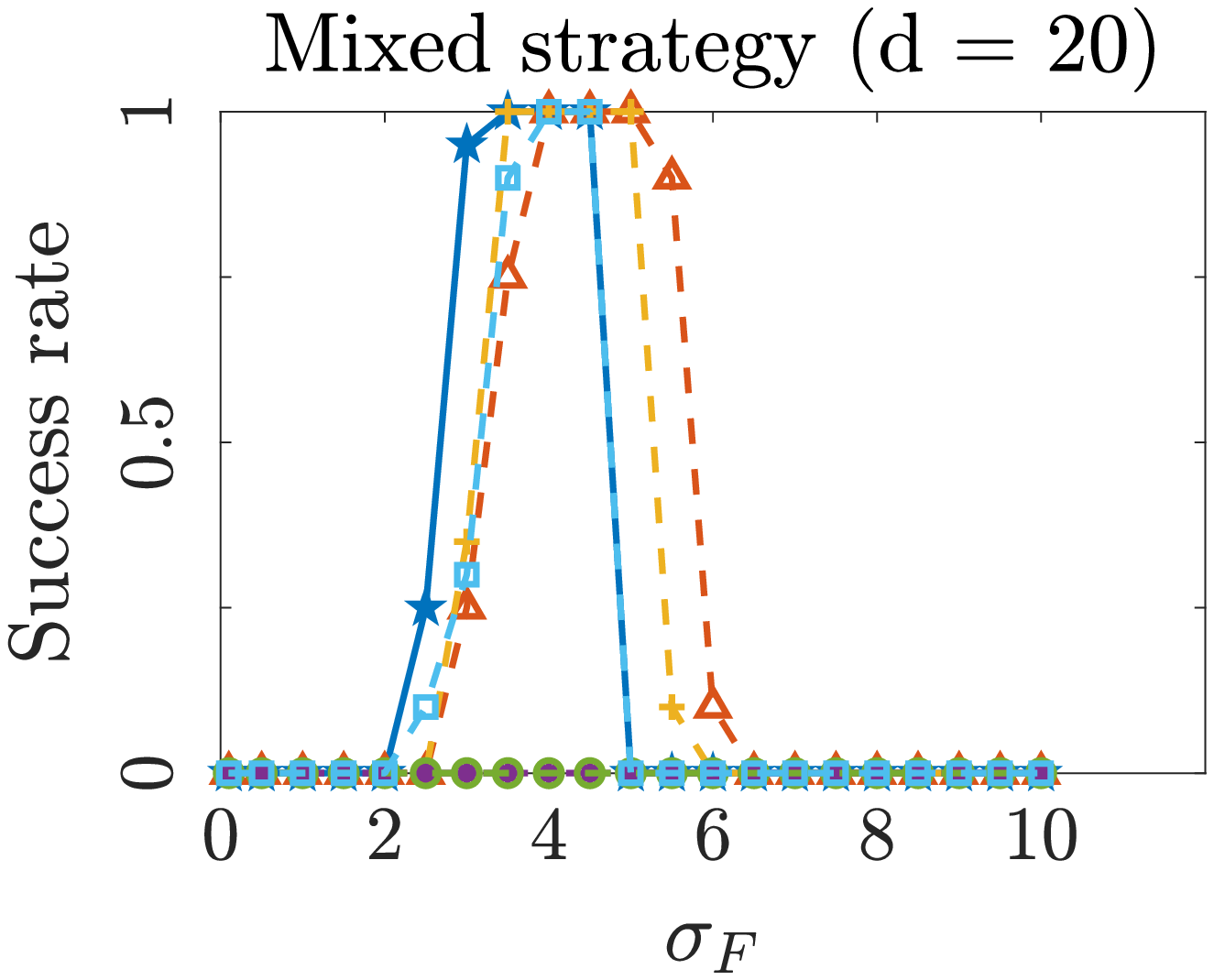}
	\includegraphics[width=0.327\linewidth]{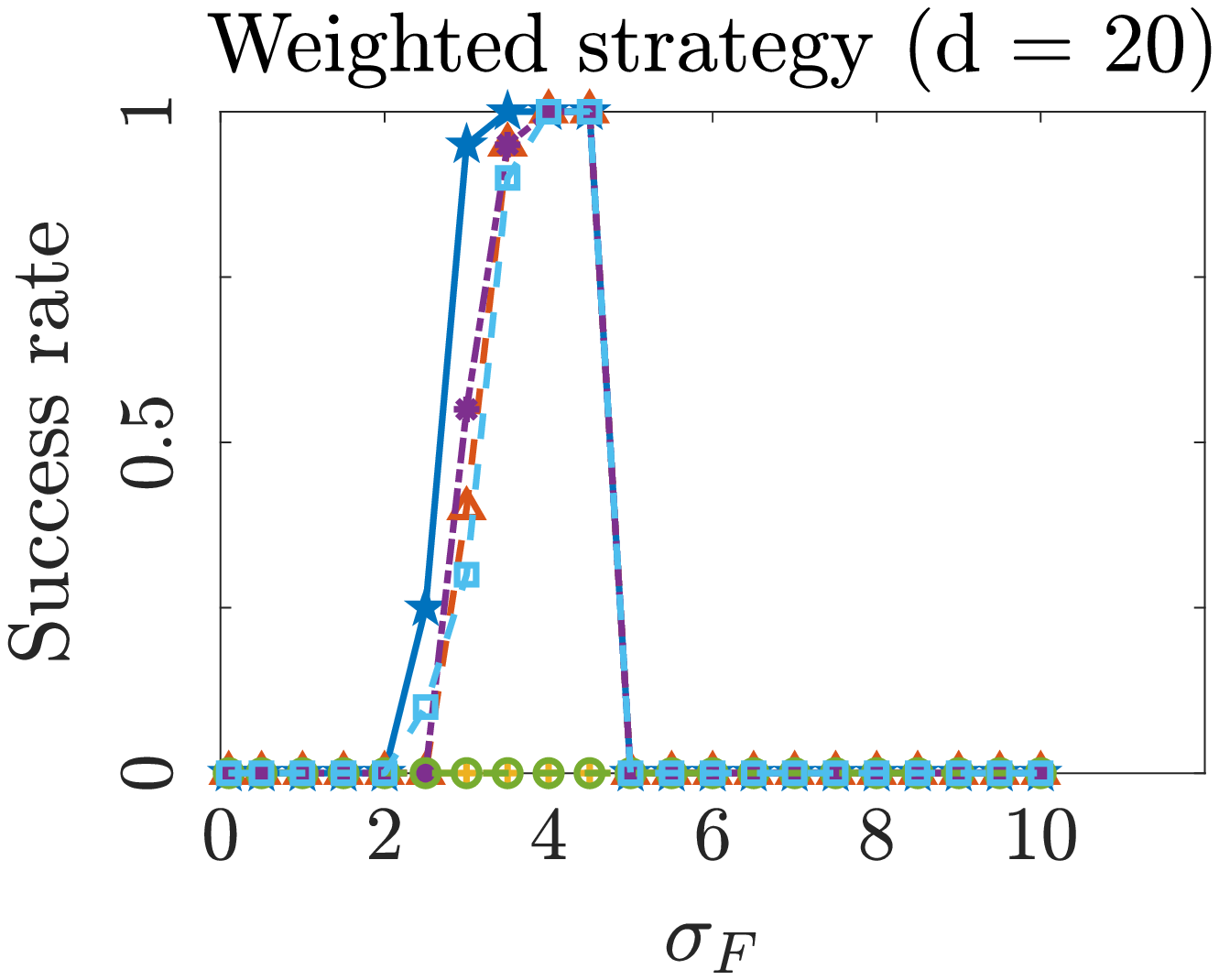}\\
	\vspace{18pt}
	\includegraphics[width=0.327\linewidth]{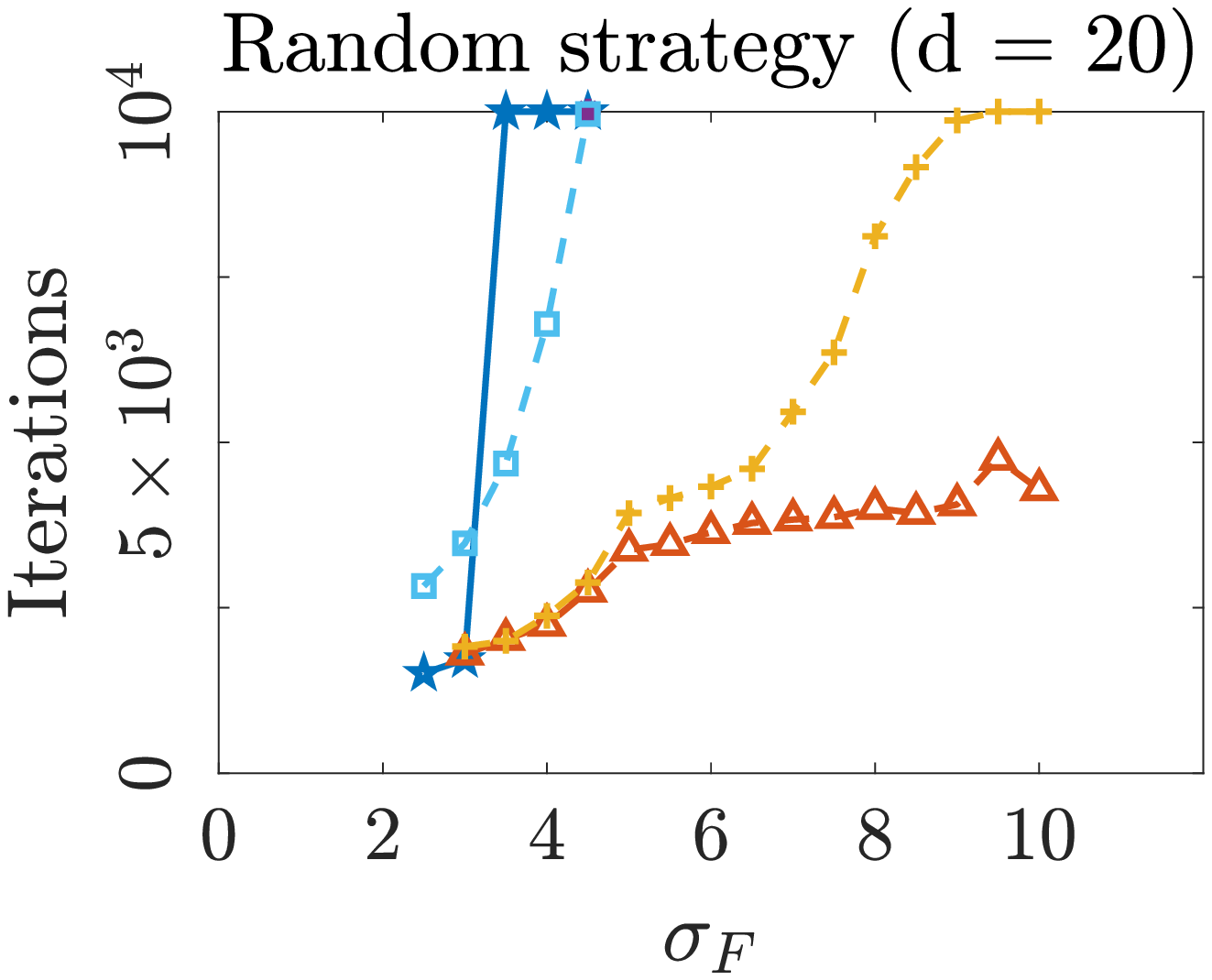}
	\includegraphics[width=0.327\linewidth]{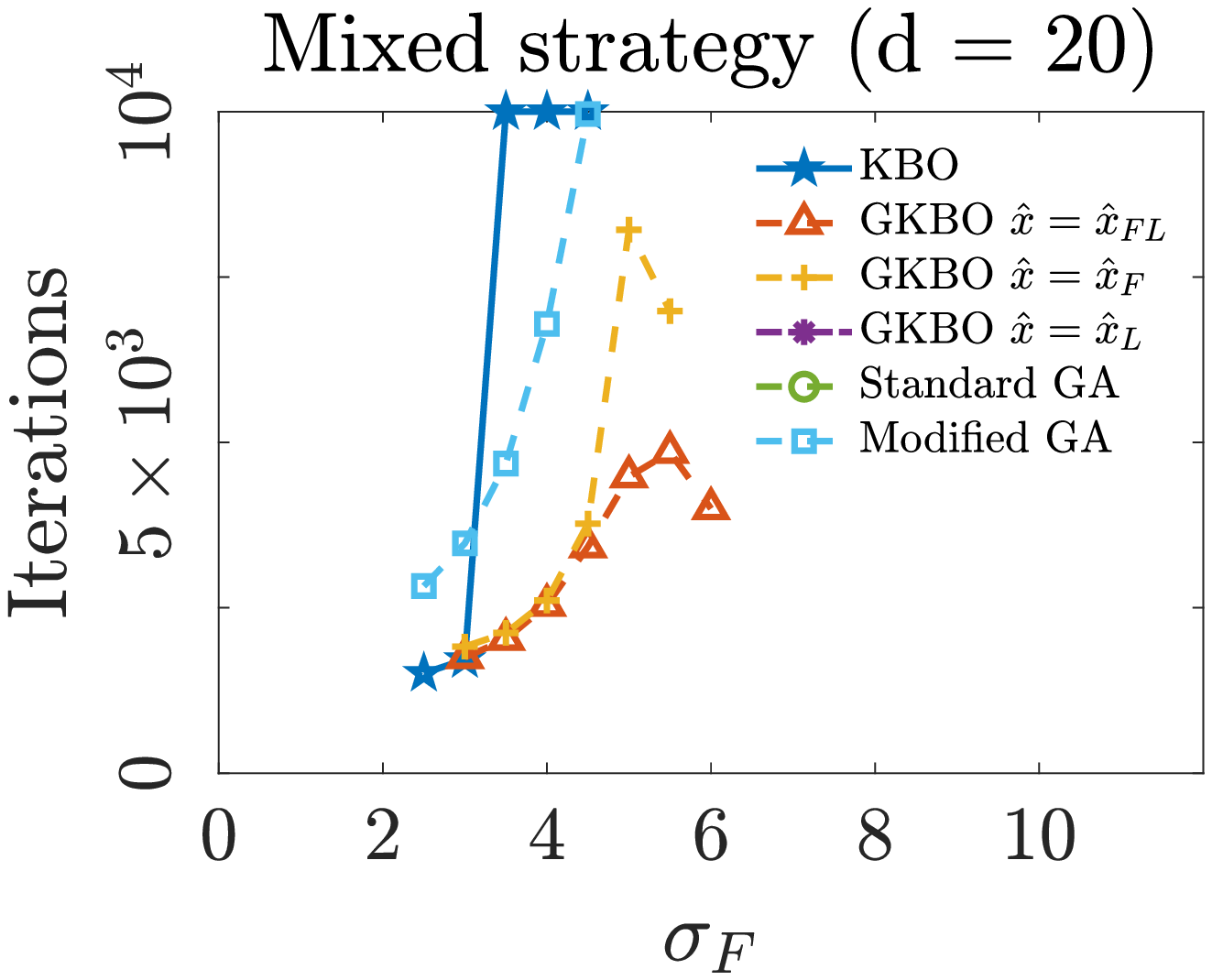}
	\includegraphics[width=0.327\linewidth]{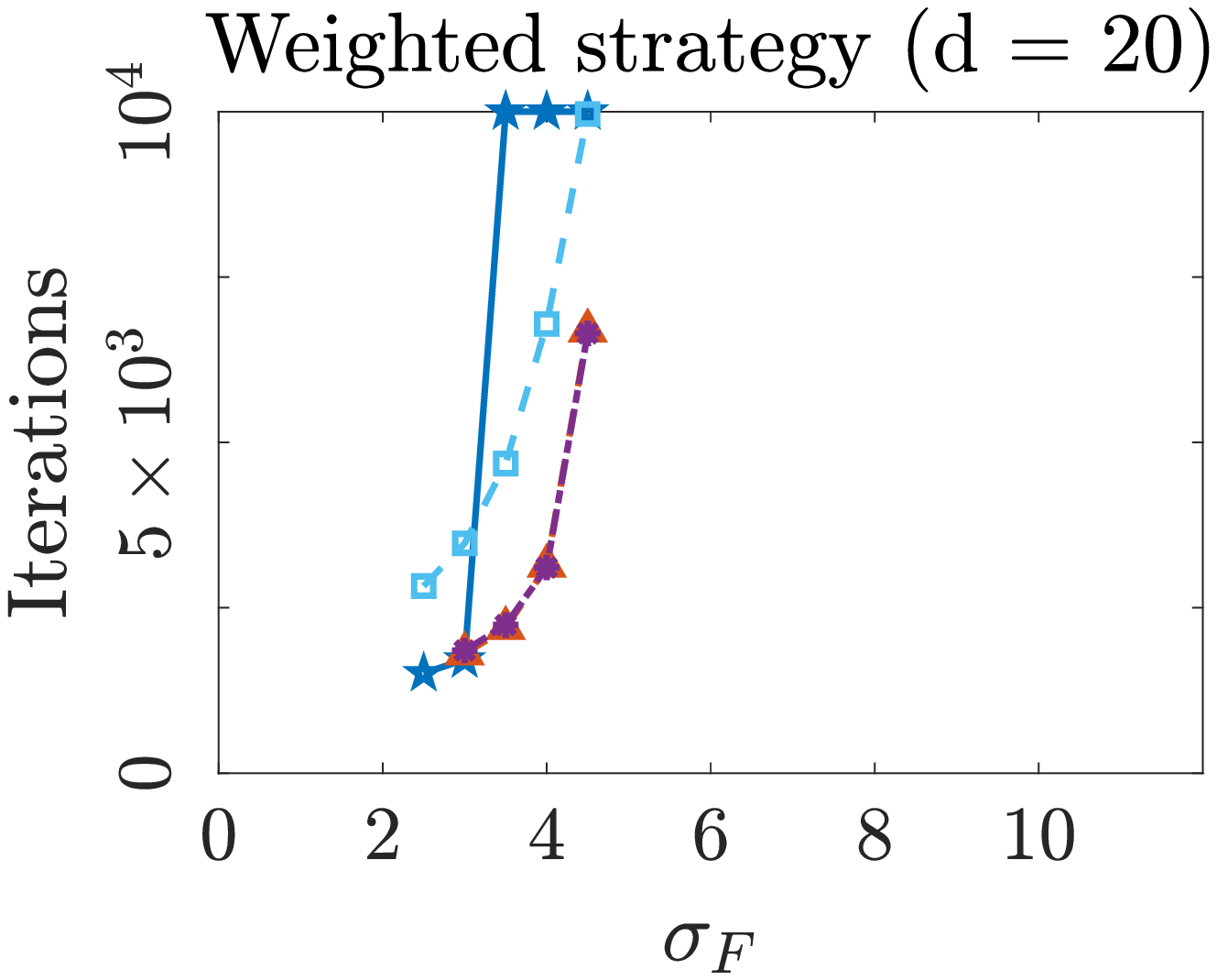}
	\vspace*{8pt}
	\caption{Success rates and means of the number of iterations as $\sigma_F$ varies and $d=20$ for the translated Rastrigin function obtained with the different algorithms. On the left leaders emerge randomly, in the, we consider mixed leader emergence with $\bar{p}=0.5$, and on the right, we have weighted leader emergence. The markers denote the value of the success rates and numbers of iterations for different $\sigma_F$.  
	}
	\label{fig:sigma_range_in0_p01}
\end{figure}

\subsection{Test 4: Comparison of different leader emergence strategies.}\label{sec:test4}
Let us fix $d = 20$ and consider the mixed leader emergence strategies as discussed in Remark~\ref{remark}. 
In Figure \ref{fig:weight_varies} on the left we see the success rates for different values of $\sigma_F$ and $\bar{p}$, on the right
the number of iterations for different values of $\bar{p}$ and for $\sigma_F= 4,5$.  In Figure \ref{fig:weight_varies_1} the success rate and minimum, maximum and mean iterations number for the GKBO method with $\hat{x}$ is shown for $d=20$ as $\bar{p}$ and $\sigma_F$ vary.  
\begin{figure}[!htb]
	\centering
	\includegraphics[width=0.495\linewidth]{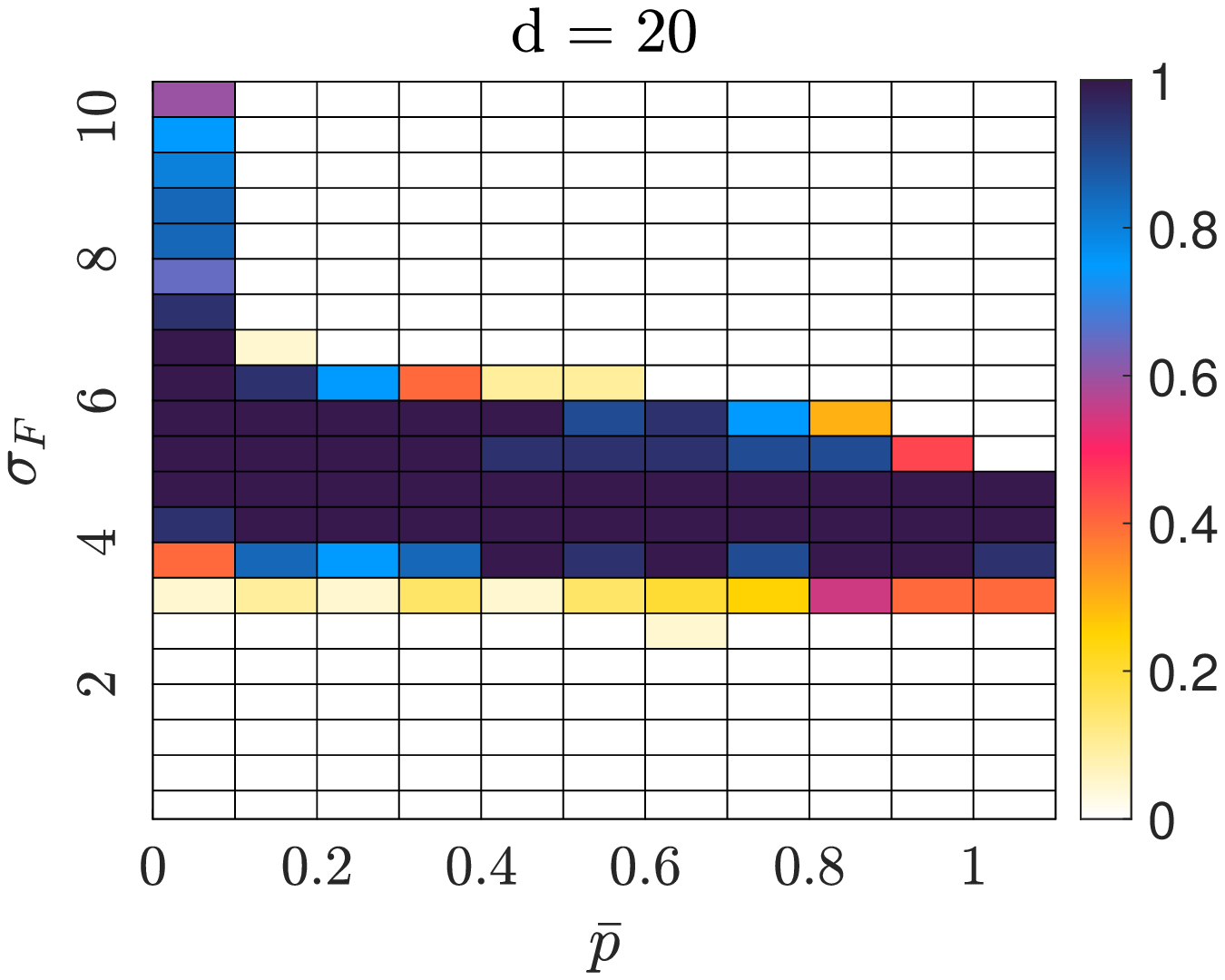}
	\includegraphics[width=0.495\linewidth]{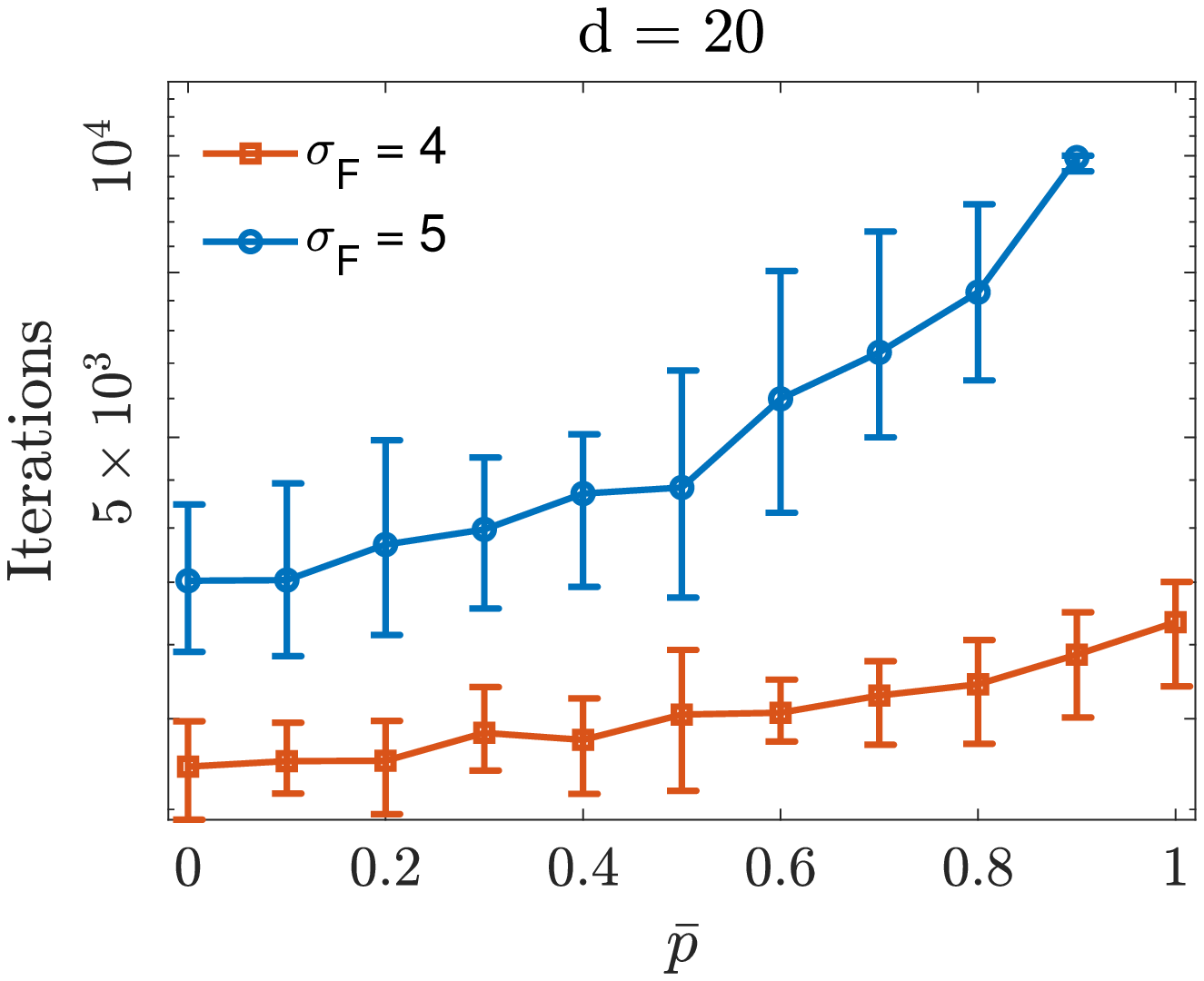}
	\vspace*{8pt}
	\caption{Different leader emergence strategies. On the left, success rates for varying $\sigma_F$ and $\bar{p}$ and $d=20$. On the right, max, min and mean number of iterations obtained in the different simulations for $d =20$ and $\sigma_F=4,5$ as $\bar{p}$ varies. The markers denote the number of iterations needed for different $\bar{p}$ and tested on the translated Rastrigin function.  
	}
	\label{fig:weight_varies}
\end{figure}

\begin{figure}[!htb]
	\centering
	\includegraphics[width=0.495\linewidth]{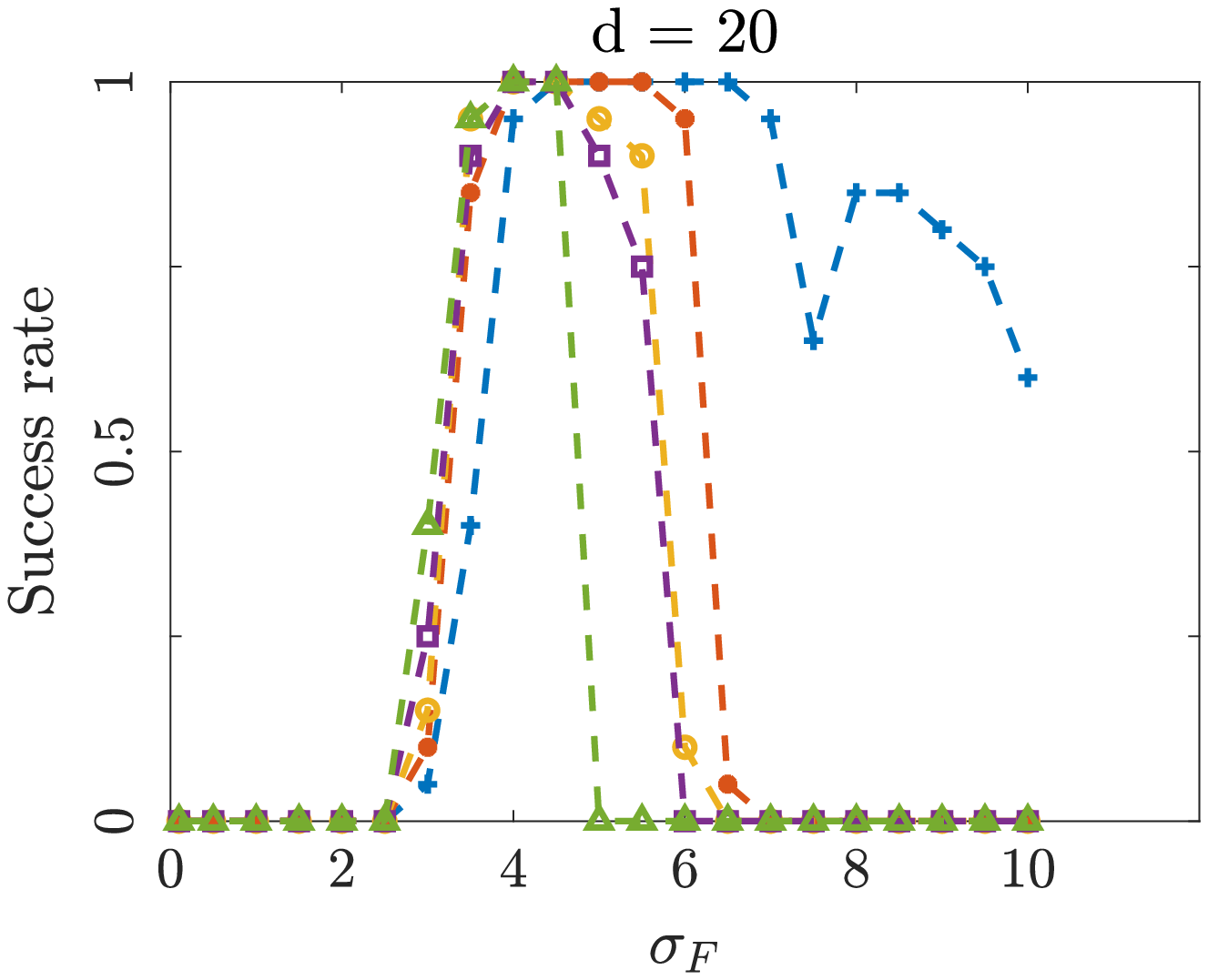}
	\includegraphics[width=0.495\linewidth]{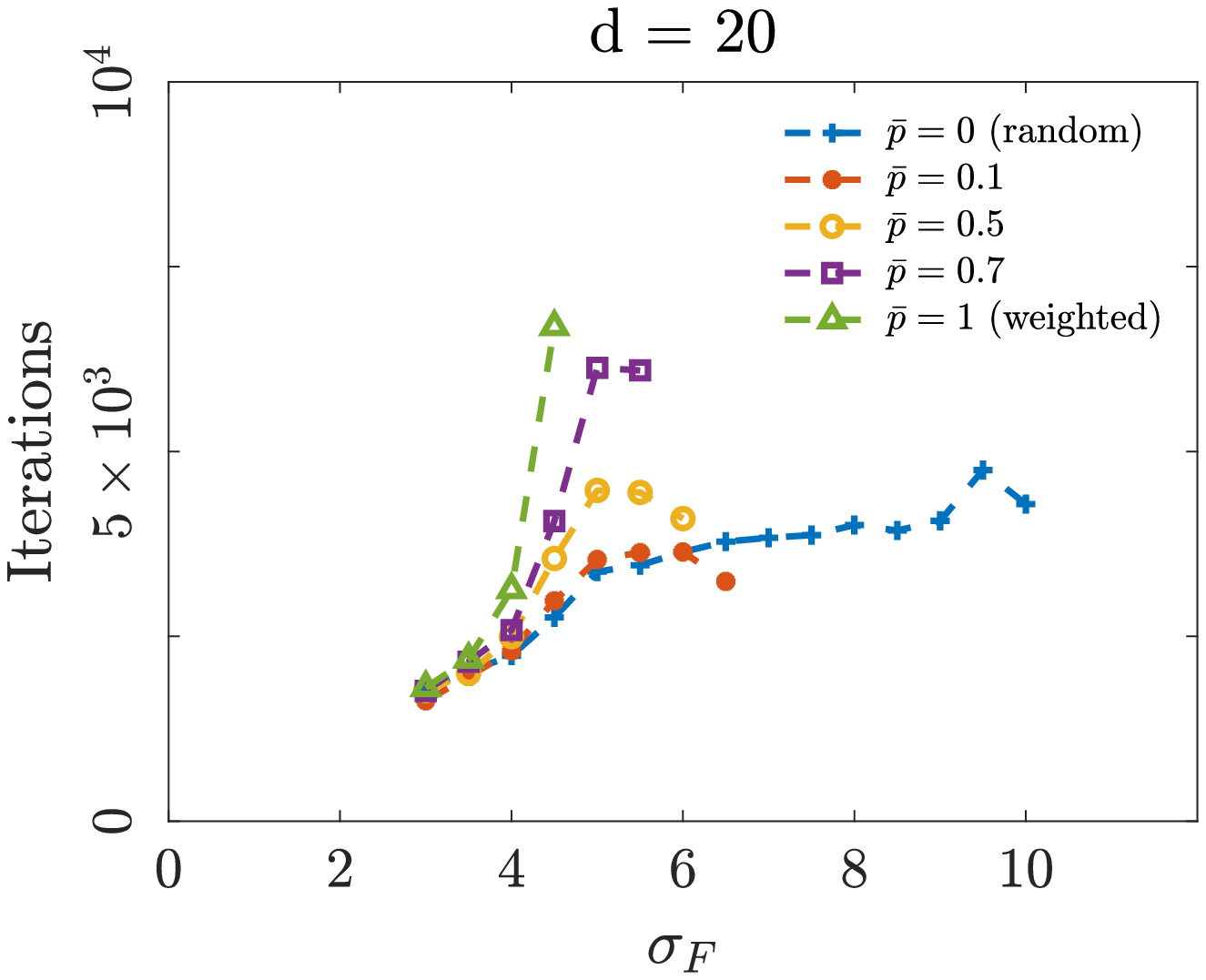}
	\vspace*{8pt}
	\caption{Different leader emergence strategies. Success rates and mean number of iterations for $d=20$ as $\sigma_F$ varies and for different values of $\bar{p}$, tested on the translated Rastigin function. The markers denote the value of the success rates and number of iterations for different $\sigma_F$.  
	}
	\label{fig:weight_varies_1}
\end{figure}

\subsection{Test 5: Comparison of different methods for varying $d$}\label{sec:test5}
We fix $\sigma_F=4$ and vary the dimension $d$ from $1$ to $20$. Figure~\ref{fig:d_range_in0_p01} shows the success rates and means of the number of iterations of the different methods in the case of the translated Rastrigin function. GKBO uses $\hat{x}$ as in \eqref{eq:x_tot}. 
\begin{figure}[!htb]
	\centering
	\includegraphics[width=0.495\linewidth]{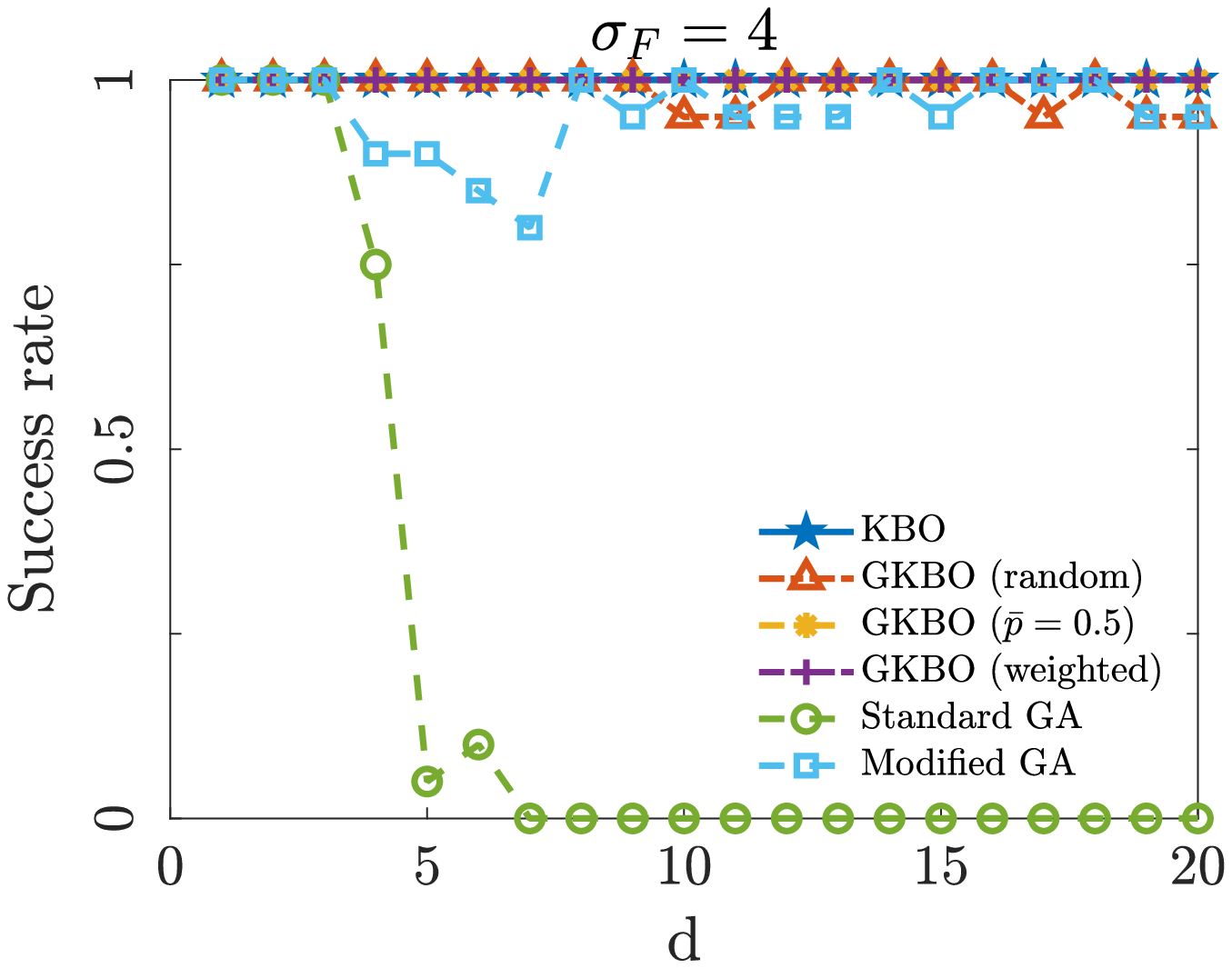}
	\includegraphics[width=0.495\linewidth]{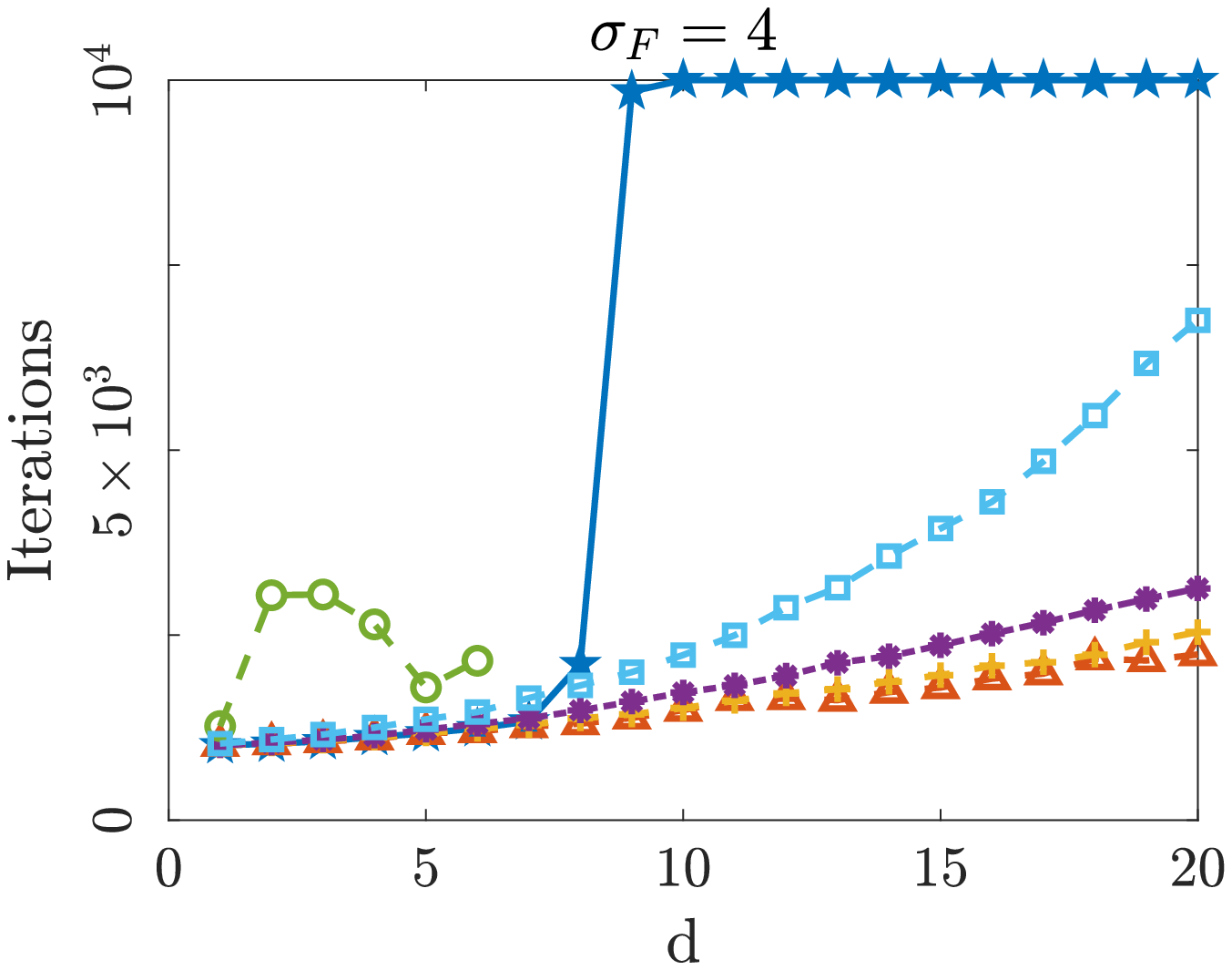}
	\vspace*{8pt}
	\caption{Success rates and number of iterations as $d$ varies for the translated Rastrigin function obtained with the different algorithms. The markers denote the value of the success rates and mean number of iterations.  
	}
	\label{fig:d_range_in0_p01}
\end{figure}
%%%%%%%%%%%%%%%%%%%%%%%%%%%%%%%%%%%%%%%%%%%%%%
%%%%%%%%%%%%%%%%%%%%%%%%%%%%%%%%%%%%%%%%%%%%%
%%%%%%%%%%%%%%%%%%%%%%%%%%%%%%%%%%%%%%%%%%%%%%%%
\subsection{Test 6: Comparison of the accuracy  for varying frequency $\varepsilon$}\label{sec:test6}
Here we study the influence of the frequency parameter $\varepsilon$ by comparing the accuracy of the KBO and GKBO with weighted and random leader emergence. We run the test for $M=100$ simulations assuming the initial data to be normally distributed in the hypercube $[-4.12,0]^d$, $d = 20$. The accuracy is computed as 
\begin{equation}\label{eq:xhat_plot}
	\Vert \hat{x}(t)-\bar{x}\Vert_\infty,
\end{equation}
where $\bar{x}$ is the actual value of the minimum.
%for different values of $\varepsilon$ and $\sigma_F$. 
%In Figure \ref{fig:accuracy_KBO} the accuracy of the KBO algorithm for $\varepsilon=0.01$ (on the left) and $\varepsilon = 0.1$ (on the right). 
In Figure \ref{fig:accuracy} the accuracy of the KBO algorithm (left) for GKBO algorithm with random leader emergence (middle) and weighted leader emergence (right) with $\varepsilon=0.01$ (first row) and $\varepsilon=0.1$ (second row).   Note that in both cases, the values of $\sigma_F$ for which the method converges with the weighted GKBO and the KBO algorithm is almost the same. If $\varepsilon=0.01$ the accuracy of the weighted GKBO is higher than the one of the KBO. If $\varepsilon=0.1$ the random strategy performs better than the other methods since the algorithm converges for almost all the values of $\sigma_F$ considered. Furthermore, if we look at the case $\sigma_F=4$, all the methods converge but the random strategy reaches higher levels of accuracy.  
% \begin{figure}[H]
% 	\centering
% 	\includegraphics[width=0.328\linewidth]{figure/variance_mean/h001/xhat_rastrigin_KBO_d20_h001}
% 	\includegraphics[width=0.328\linewidth]{figure/variance_mean/h01/xhat_rastrigin_KBO_d20_h01}
% 	\caption{Accuracy of the KBO algorithm as $\sigma_F$ varies for $d=20$ and $h=0.1$ (on the left), $\varepsilon = 0.1$ (on the right) for the Rastrigin function. Markers have been added to distinguish the different lines.   
% 	}
% 	\label{fig:accuracy_KBO}
% \end{figure}

\begin{figure}[!htb]
\centering
	\includegraphics[width=0.327\linewidth]{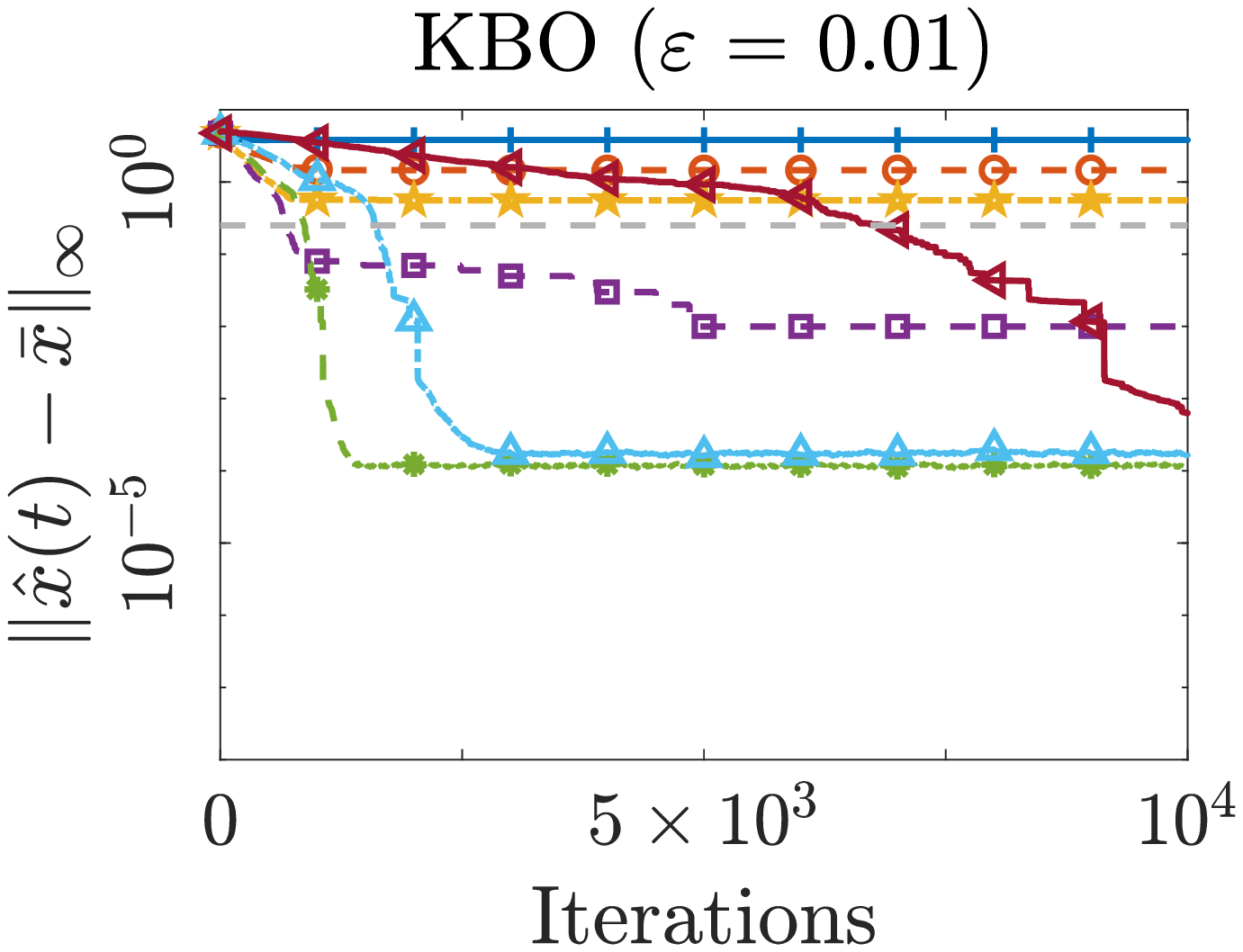}
	\includegraphics[width=0.327\linewidth]{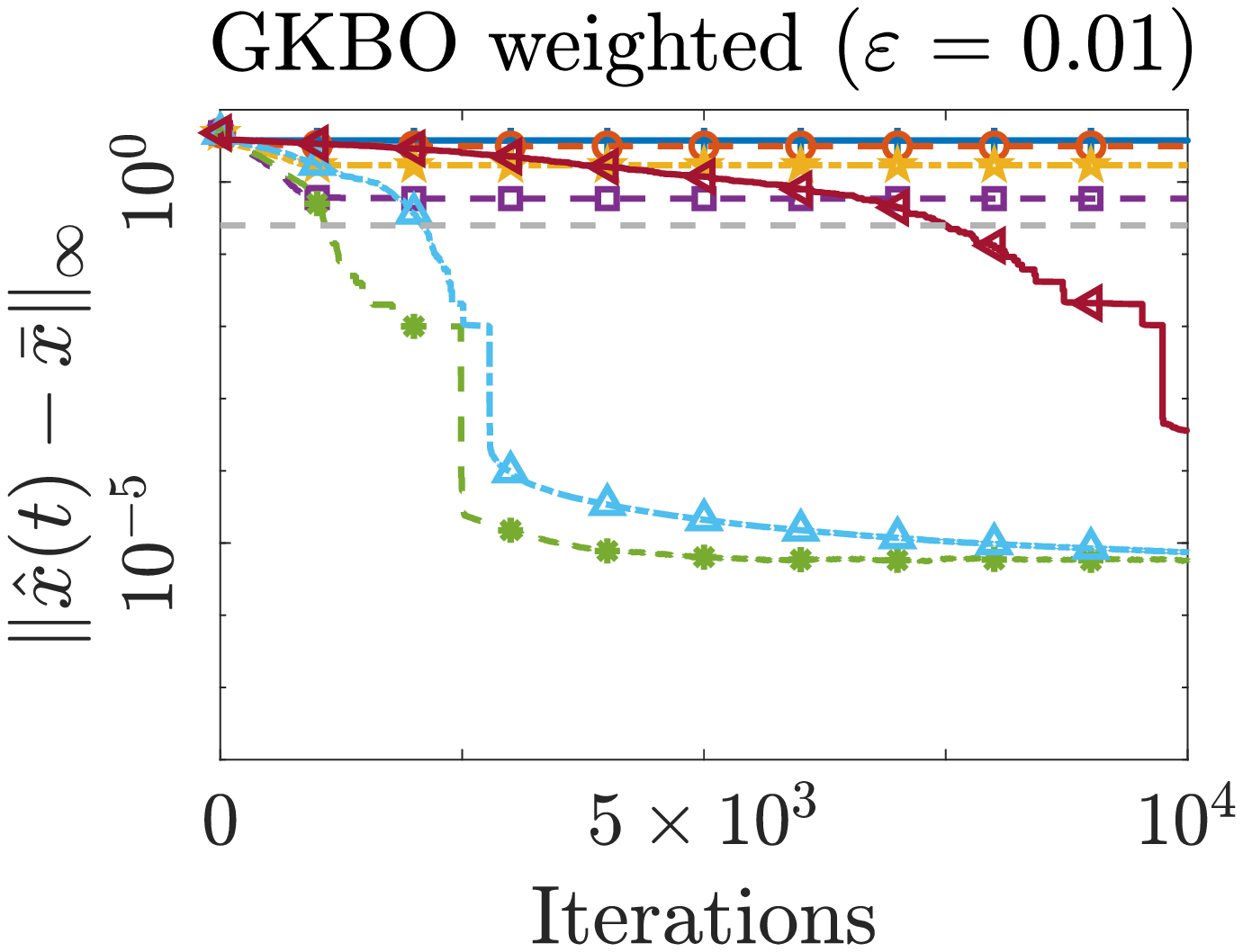}
	\includegraphics[width=0.327\linewidth]{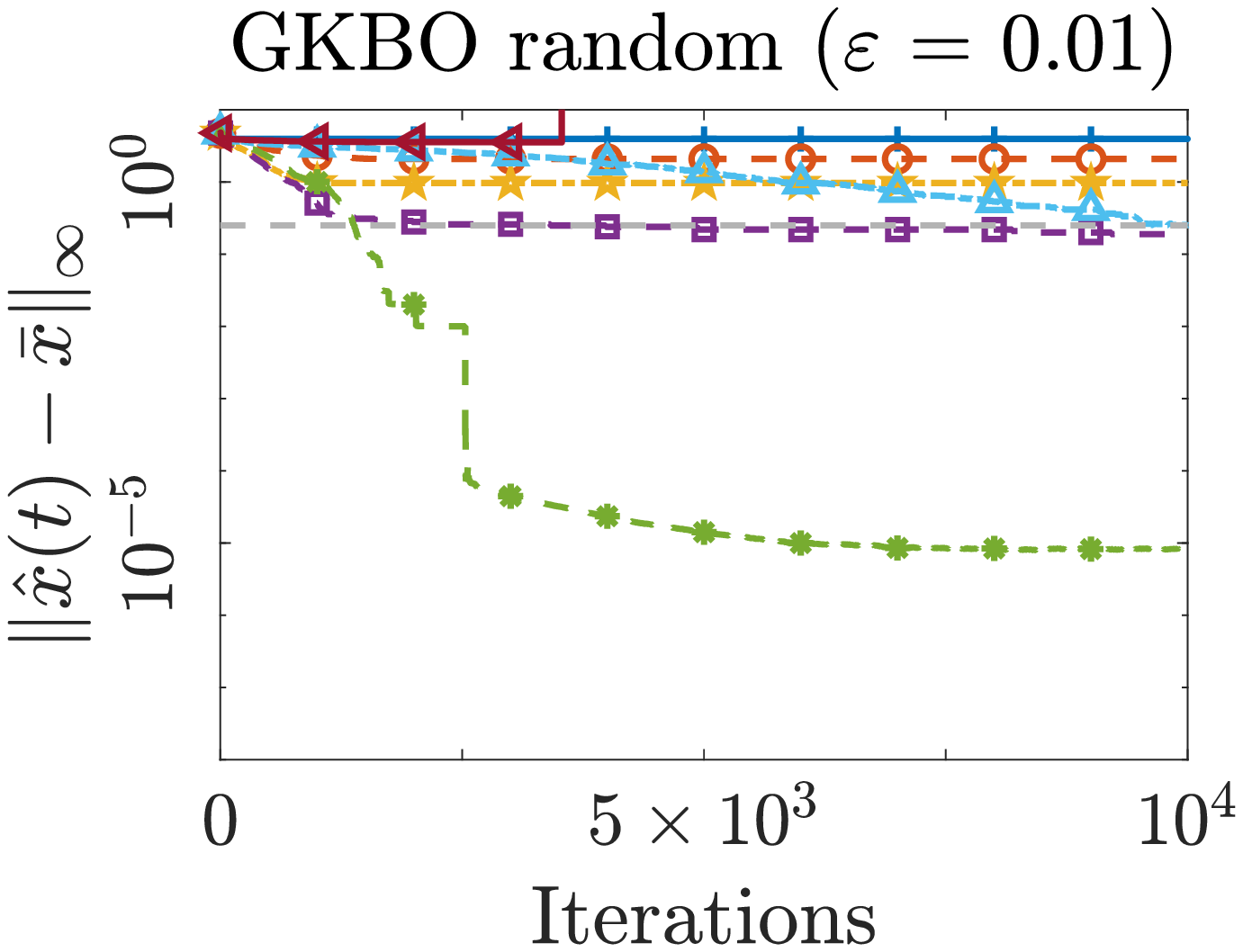}\\
	\vspace{.3cm}
	\includegraphics[width=0.327\linewidth]{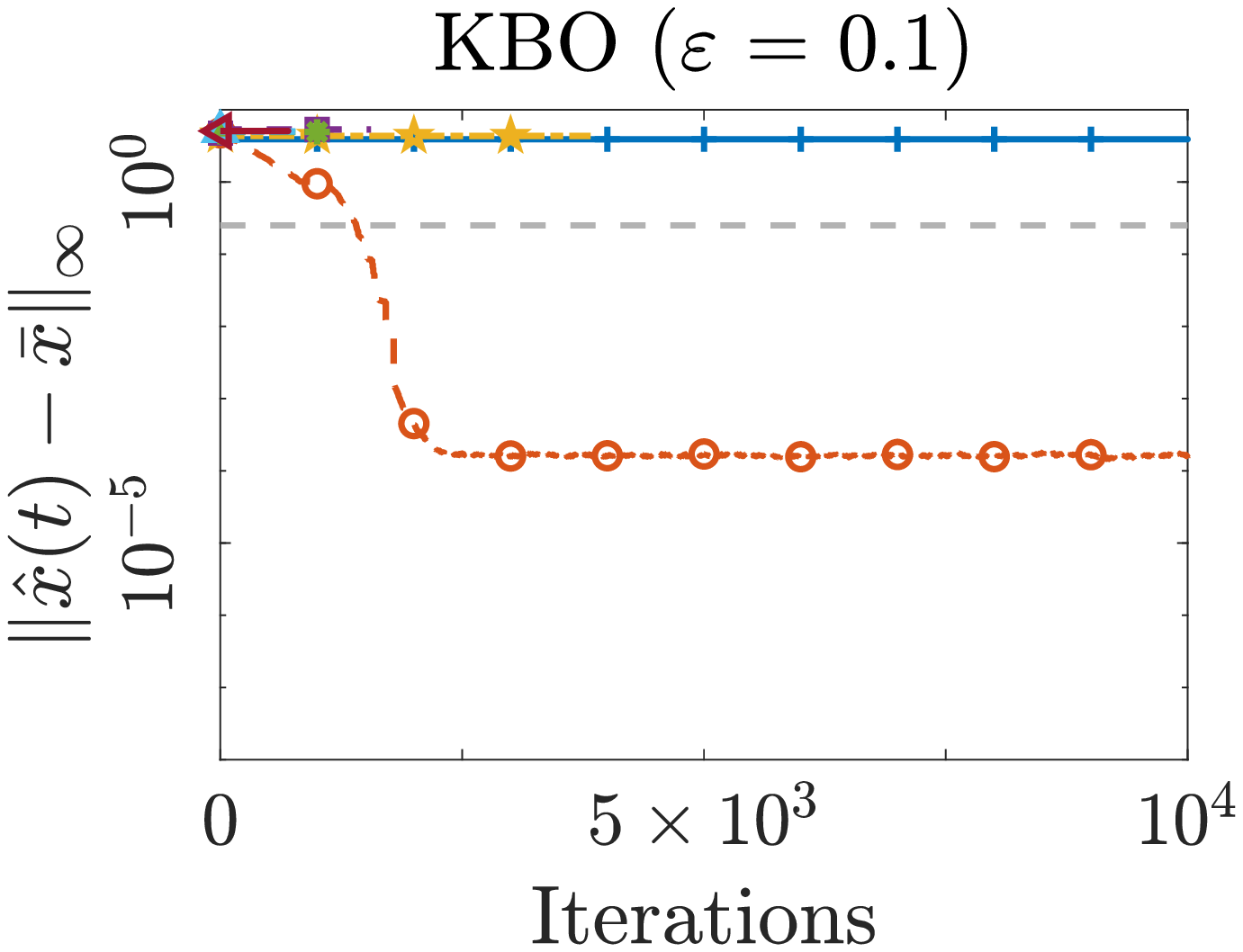}
	\includegraphics[width=0.327\linewidth]{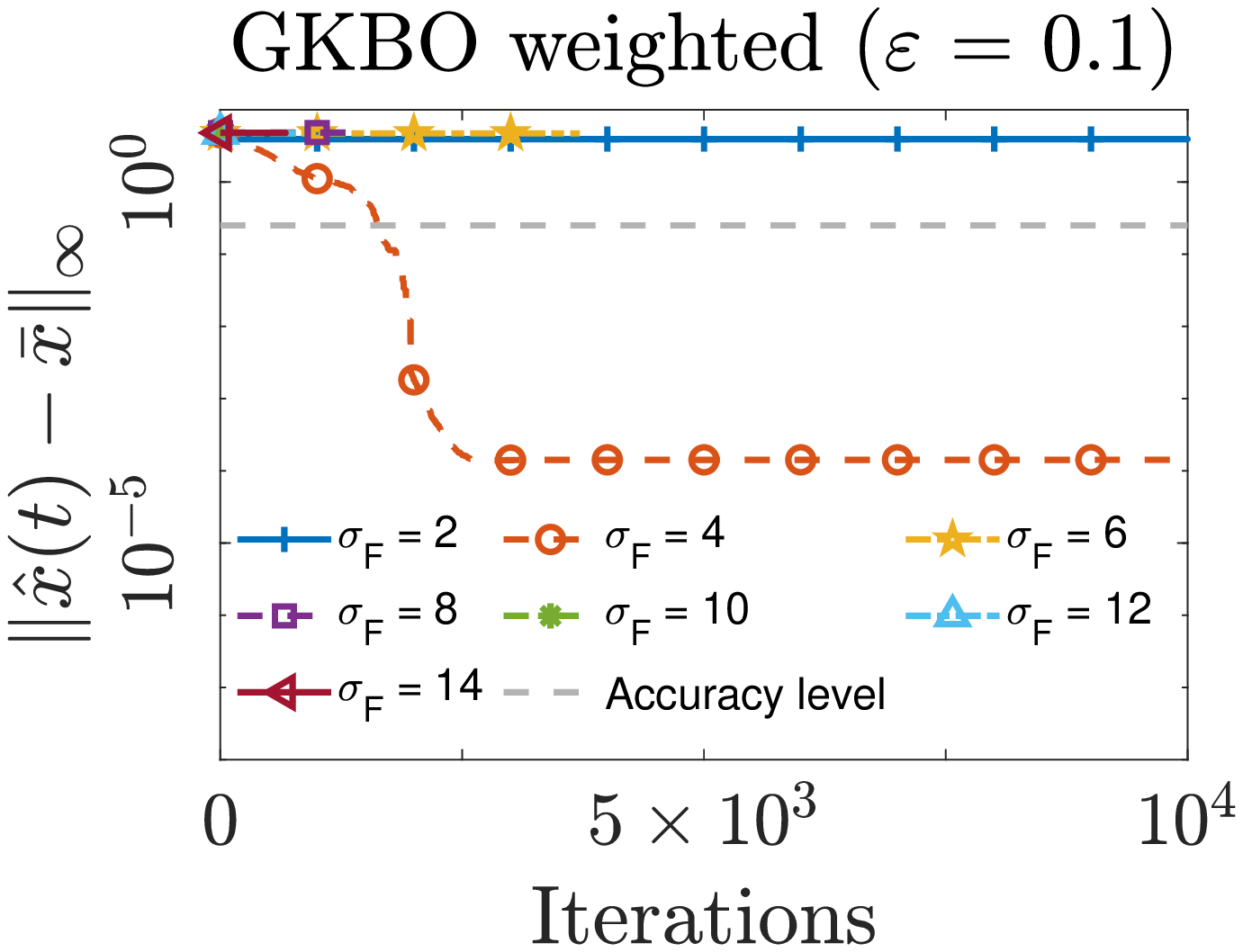}
	\includegraphics[width=0.327\linewidth]{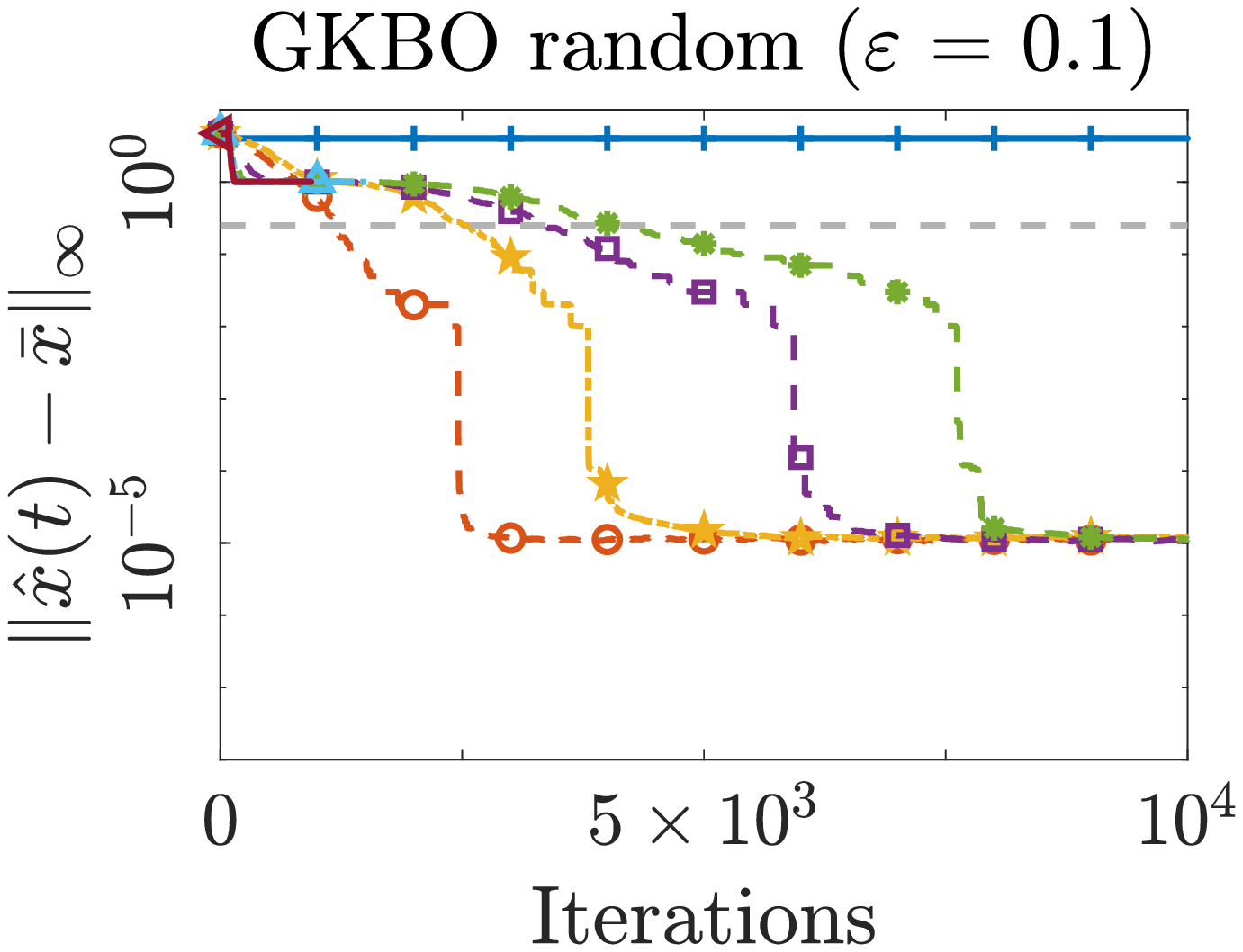}
	\vspace*{8pt}
	\caption{Accuracy of the KBO and of the GKBO algorithm as $\sigma_F$ varies  for $d=20$ and $\varepsilon=0.01$ (first row), $\varepsilon = 0.1$ (second row) for the translated Rastrigin function. From the left to the right, KBO, GKBO with random leader emergence and GKBO with weighted leader emergence. The markers have been added to distinguish the lines.    
	}
	\label{fig:accuracy}
\end{figure}
In Figure \ref{fig:KBO_GKBO_hvaries} the results in terms of success rate and number of iterations needed for different values of $\varepsilon$ are shown.  If $\varepsilon = 0.01$ the success rate are of the GKBO methods is smaller than the one of the KBO but the number of needed iterations is reduced. If $\varepsilon =0.1$, the success rate area is enlarged for the strategy with random leader emergence. With this test we confirm the results obtained in Figure \ref{fig:accuracy}. Moreover, the number of iterations is reduced with respect to the KBO and the GKBO method with weighted leader emergence.
\begin{figure}[!htb]
	\centering
	\includegraphics[width=0.495\linewidth]{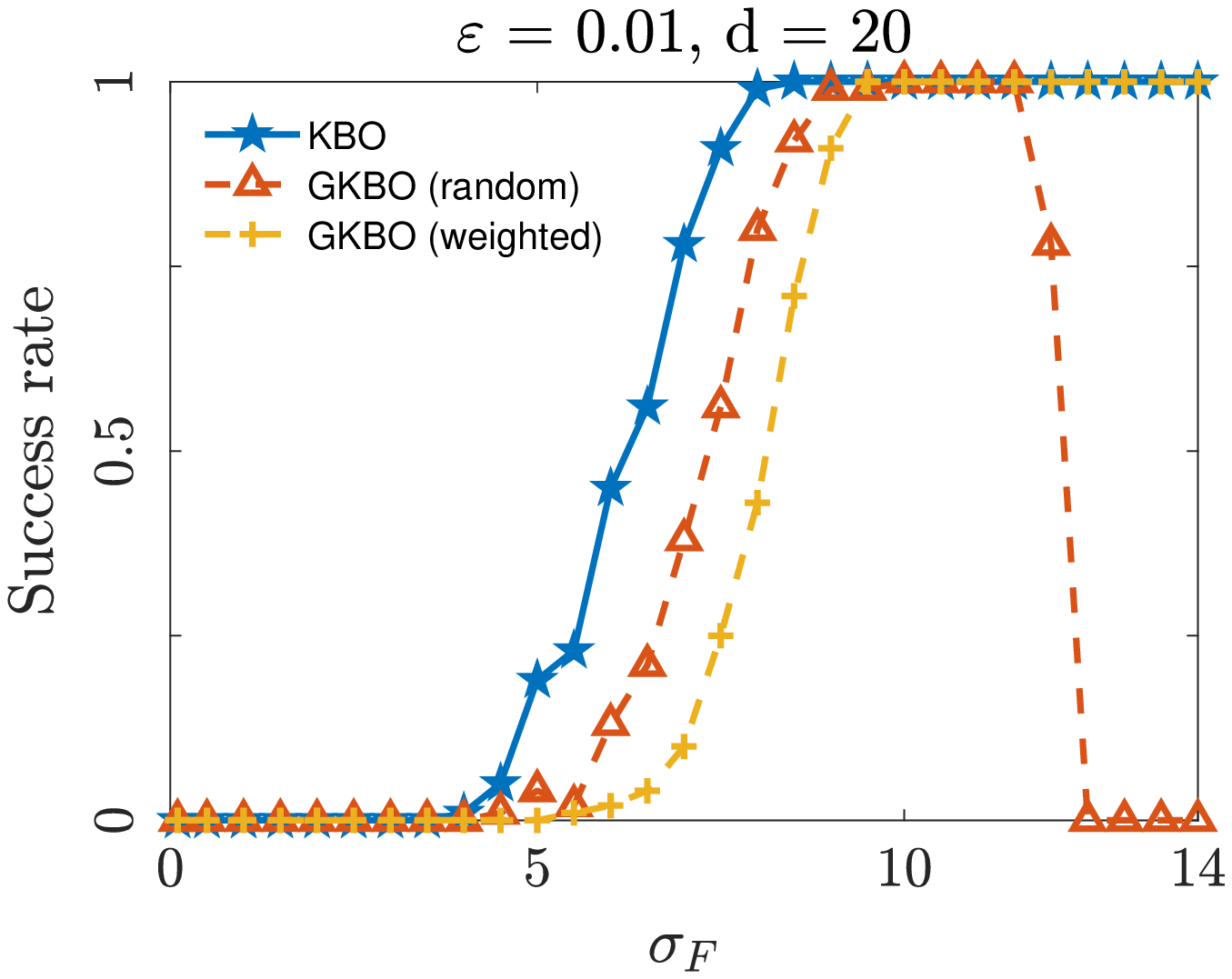}
	\includegraphics[width=0.495\linewidth]{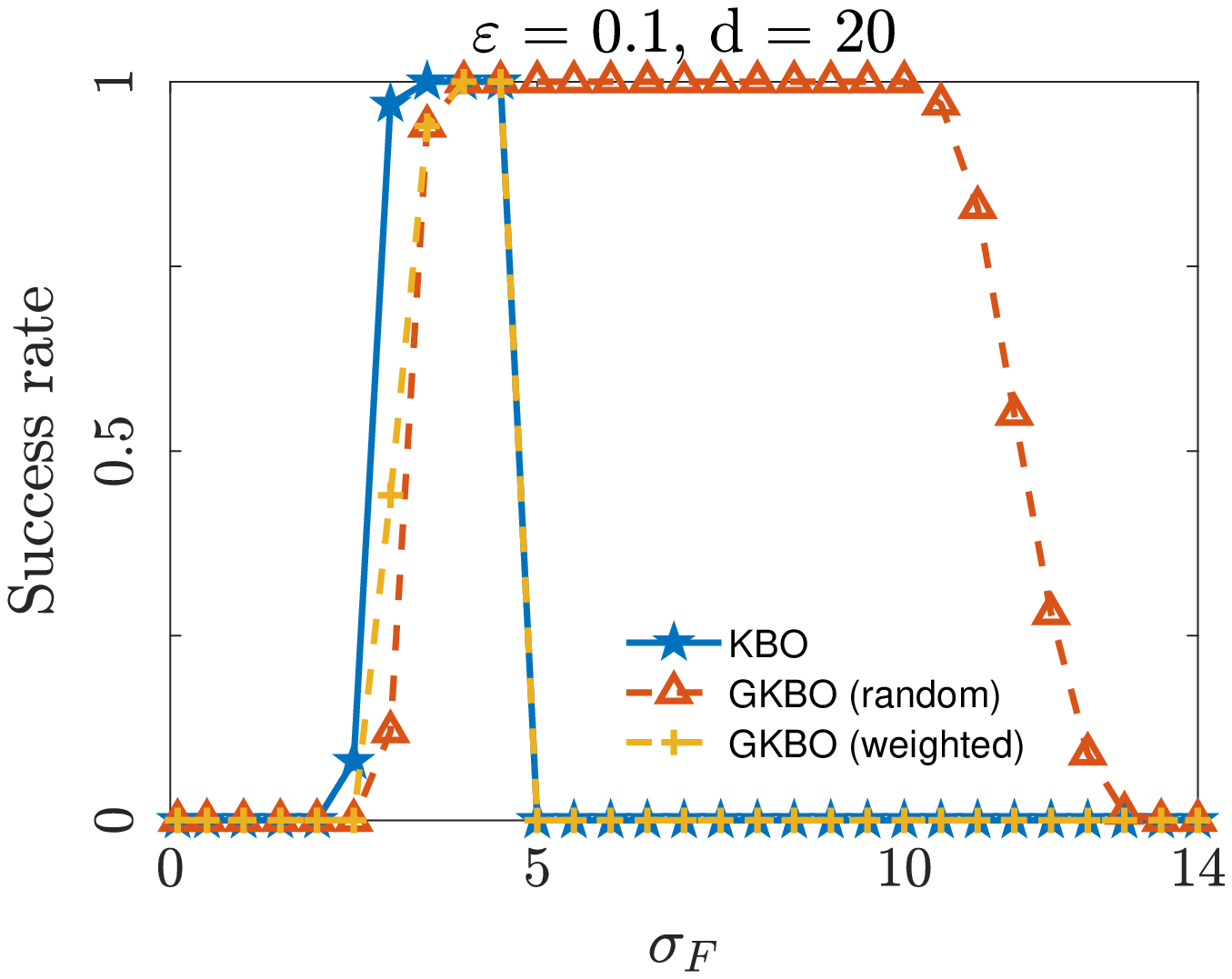}\\
	\includegraphics[width=0.495\linewidth]{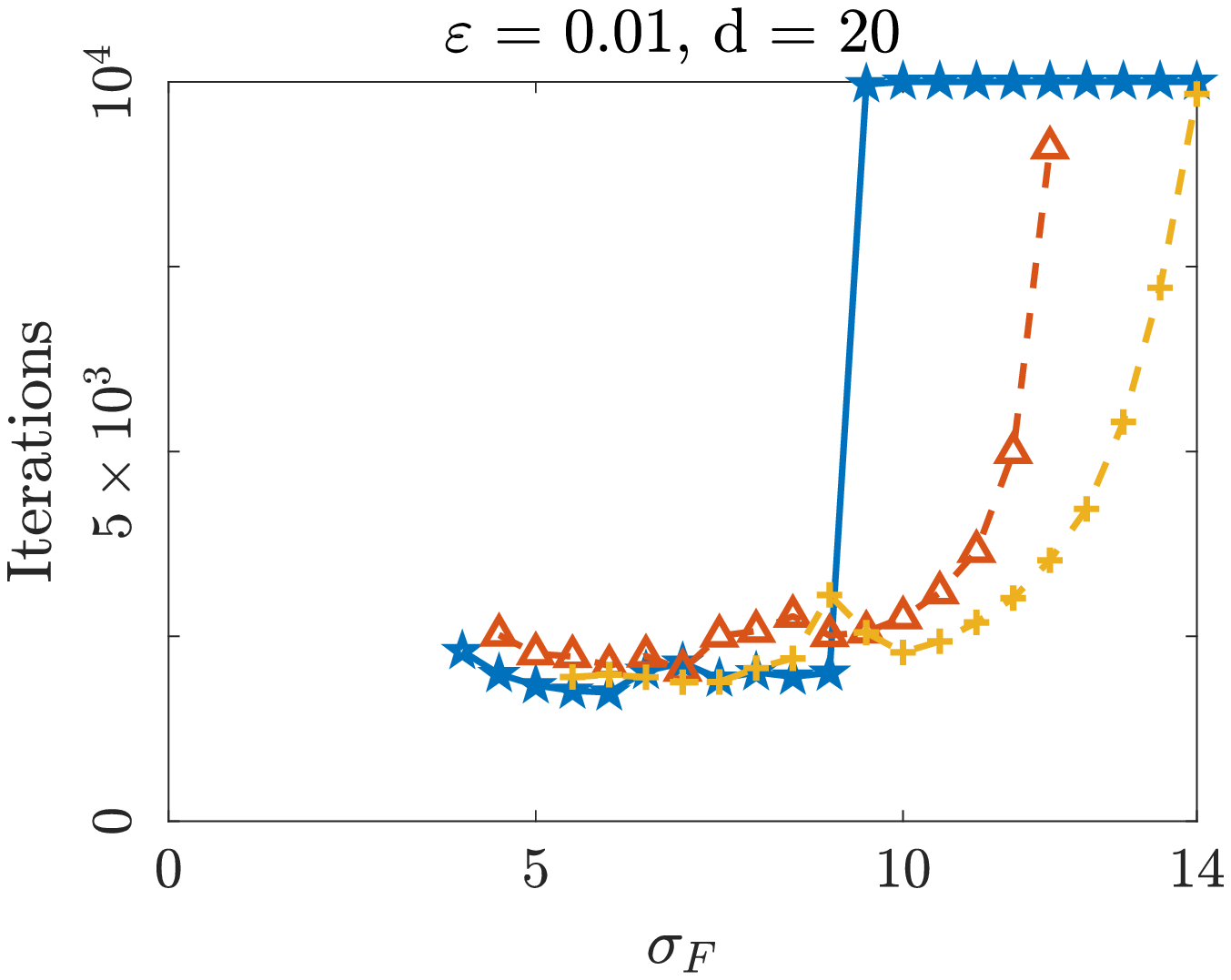}
	\includegraphics[width=0.495\linewidth]{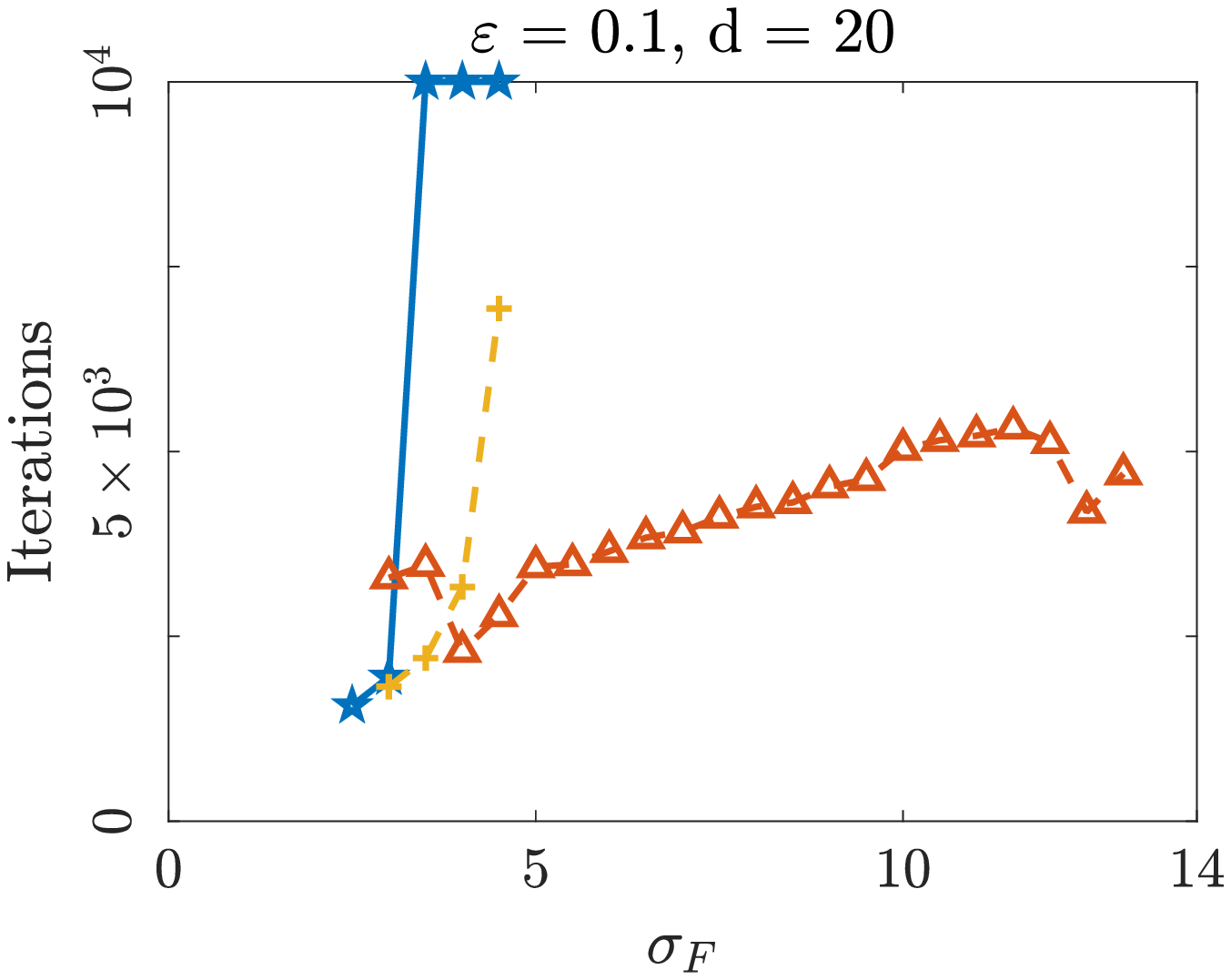}
	\vspace*{8pt}
	\caption{Success rates and means of the number of iterations as $\sigma_F$ varies for $\varepsilon=0.01$ (first column), $\varepsilon = 0.1$ (second column) for the translated Rastrigin function.   The markers have been added to distinguish the lines.    
	}
	\label{fig:KBO_GKBO_hvaries}
\end{figure}

\subsection{Test 7: Comparison of different benchmark functions}\label{sec:test7}
In the previous subsection we tested the different algorithms and different parameter sets with the translated Rastrigin function. Now, we choose $\sigma_F$ such that both the KBO and the GKBO algorithms have success rate equal to one in the previous studies and test different benchmark functions in $20$ dimensions. 
In Figure \ref{fig:funct_in0} the comparison of KBO and GKBO in terms of success rate and mean number of iterations are shown. GKBO with both variants of leader emergence outperforms KBO in terms of the number of iterations.
\begin{figure}[!htb]
\centering
	\includegraphics[width=0.495\linewidth]{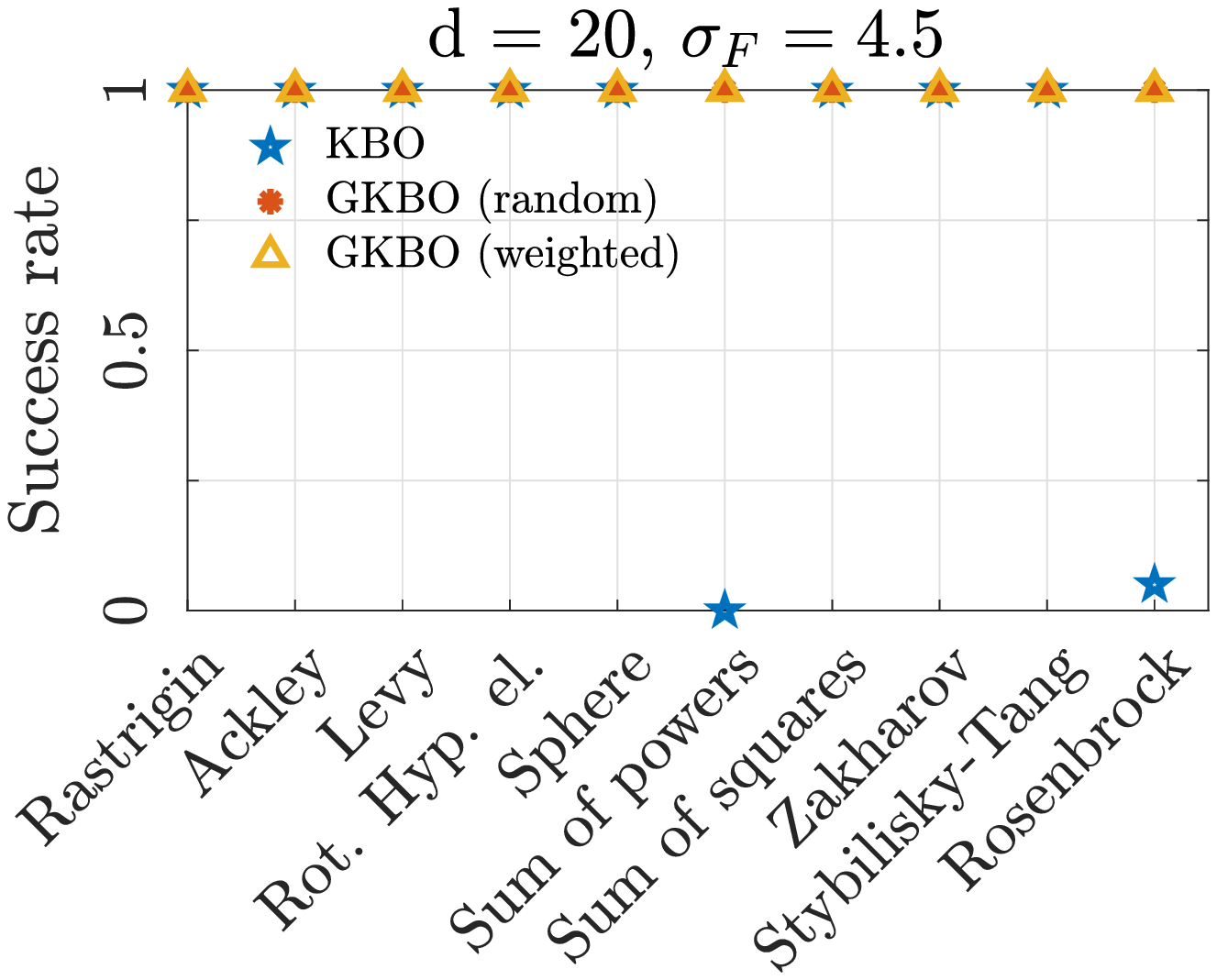}
	\includegraphics[width=0.495\linewidth]{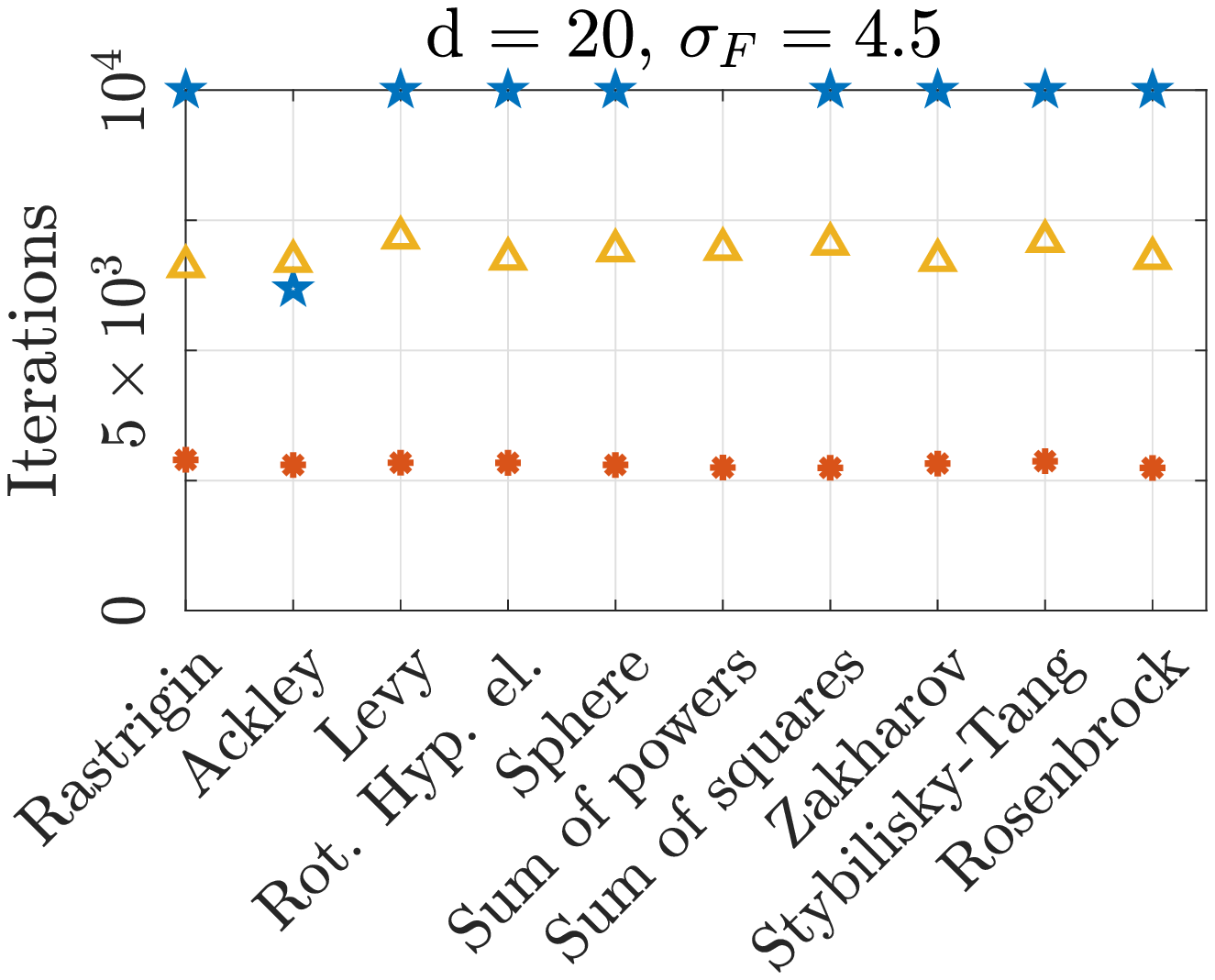}
\vspace*{8pt}
	\caption{ Success rate and mean iterations number for the different benchmark function  for fixed $d=20$ and $\sigma_F = 4.5$.  Markers denotes the value of the success rate and of the iterations number for the different benchmark functions.
	}
	\label{fig:funct_in0}
\end{figure}

\section{Conclusion}\label{sec:conclusion}
We propose a variant of the KBO method for global optimization which is enhanced by a transition process, inspired by genetic dynamics. These lead to a population divided into two species which we call followers and leaders. We adapt the convergence analysis to the new method and show in particular that the population concentrates in the long-time limit arbitrarily close to the global minimizer of the cost function. Numerical results show the feasibility of the approach and the improvement of the proposed generalization in terms of numerical effort. In particular, we measure the numerical effort in terms of the number of iterations which are proportional to the number of function evaluations used in the optimization process. Since GKBO outperforms KBO in this measure, the proposed variant is especially attractive for optimization problems with expensive cost function evaluations. This can be the case for example in engineering applications where each cost function evaluation requires a time-consuming simulation, such as the solution of high-dimensional PDEs  with commercial codes.

\section*{Acknowledgment}
GA and FF were partially supported by the MIUR-PRIN
Project 2022, No. 2022N9BM3N ``{\em Efficient numerical schemes and optimal control methods for time-dependent PDEs}''. 
\newpage
%\section*{References}


\begin{thebibliography}{00}

\bibitem{albi2019vehicular}
Giacomo Albi, Nicola Bellomo, Luisa Fermo, S-Y Ha, J~Kim, Lorenzo Pareschi,
David Poyato, and Juan Soler.
\newblock Vehicular traffic, crowds, and swarms: From kinetic theory and
multiscale methods to applications and research perspectives.
\newblock {\em Mathematical Models and Methods in Applied Sciences},
29(10):1901--2005, 2019.

\bibitem{albi2019leader}
Giacomo Albi, Mattia Bongini, Francesco Rossi, and Francesco Solombrino.
\newblock Leader formation with mean-field birth and death models.
\newblock {\em Mathematical Models and Methods in Applied Sciences},
29(04):633--679, 2019.

\bibitem{albi2013binary}
Giacomo Albi and Lorenzo Pareschi.
\newblock Binary interaction algorithms for the simulation of flocking and
swarming dynamics.
\newblock {\em Multiscale Modeling \& Simulation}, 11(1):1--29, 2013.

\bibitem{bellomo2021life}
Nicola Bellomo, Diletta Burini, Giovanni Dosi, Livio Gibelli, Damian Knopoff,
Nisrine Outada, Pietro Terna, and Maria~Enrica Virgillito.
\newblock What is life? a perspective of the mathematical kinetic theory of
active particles.
\newblock {\em Mathematical Models and Methods in Applied Sciences},
31(09):1821--1866, 2021.

\bibitem{bellomo2022towards}
Nicola Bellomo, Livio Gibelli, Annalisa Quaini, and Alessandro Reali.
\newblock Towards a mathematical theory of behavioral human crowds.
\newblock {\em Mathematical Models and Methods in Applied Sciences},
32(02):321--358, 2022.

\bibitem{benfenati2022binary}
Alessandro Benfenati, Giacomo Borghi, and Lorenzo Pareschi.
\newblock Binary interaction methods for high dimensional global optimization
and machine learning.
\newblock {\em Applied Mathematics \& Optimization}, 86(1):9, 2022.

\bibitem{borghi2022consensus}
Giacomo Borghi, Michael Herty, and Lorenzo Pareschi.
\newblock A consensus-based algorithm for multi-objective optimization and its
mean-field description.
\newblock In {\em 2022 IEEE 61st Conference on Decision and Control (CDC)},
pages 4131--4136. IEEE, 2022.

\bibitem{borghi2023constrained}
Giacomo Borghi, Michael Herty, and Lorenzo Pareschi.
\newblock Constrained consensus-based optimization.
\newblock {\em SIAM Journal on Optimization}, 33(1):211--236, 2023.

\bibitem{bottou2018optimization}
L{\'e}on Bottou, Frank~E Curtis, and Jorge Nocedal.
\newblock Optimization methods for large-scale machine learning.
\newblock {\em SIAM review}, 60(2):223--311, 2018.

\bibitem{carrillo2018analytical}
Jos{\'e}~A Carrillo, Young-Pil Choi, Claudia Totzeck, and Oliver Tse.
\newblock An analytical framework for consensus-based global optimization
method.
\newblock {\em Mathematical Models and Methods in Applied Sciences},
28(06):1037--1066, 2018.

\bibitem{carrillo2010particle}
Jos{\'e}~A Carrillo, Massimo Fornasier, Giuseppe Toscani, and Francesco Vecil.
\newblock Particle, kinetic, and hydrodynamic models of swarming.
\newblock {\em Mathematical modeling of collective behavior in socio-economic
	and life sciences}, pages 297--336, 2010.

\bibitem{carrillo2021consensus}
Jos{\'e}~A Carrillo, Shi Jin, Lei Li, and Yuhua Zhu.
\newblock A consensus-based global optimization method for high dimensional
machine learning problems.
\newblock {\em ESAIM: Control, Optimisation and Calculus of Variations}, 27:S5,
2021.

\bibitem{Chen2022adamCBO}
Shi Chen, JingrunJin and Liyao Lyu.
\newblock A consensus-based global optimization method with adaptive momentum
estimation.
\newblock {\em Communications in Computational Physics}, 31(4):1296--1316,
2022.

\bibitem{dembo1998zeitouni}
A~Dembo.
\newblock Zeitouni, 0. large deviations techniques and applications.
\newblock {\em Applications of Mathematics}, 38, 1998.

\bibitem{fornasier2020consensus}
Massimo Fornasier, Hui Huang, Lorenzo Pareschi, and Philippe S{\"u}nnen.
\newblock Consensus-based optimization on hypersurfaces: Well-posedness and
mean-field limit.
\newblock {\em Mathematical Models and Methods in Applied Sciences},
30(14):2725--2751, 2020.

\bibitem{fornasier2021consensus}
Massimo Fornasier, Hui Huang, Lorenzo Pareschi, and Philippe S{\"u}nnen.
\newblock Consensus-based optimization on the sphere: Convergence to global
minimizers and machine learning.
\newblock {\em The Journal of Machine Learning Research}, 22(1):10722--10776,
2021.

\bibitem{golberg1989genetic}
David~E. Golberg.
\newblock {\em Genetic algorithms in search, optimization, and machine
	learning}, volume 1989.
\newblock 1989.

\bibitem{grassi2021particle}
Sara Grassi and Lorenzo Pareschi.
\newblock From particle swarm optimization to consensus based optimization:
stochastic modeling and mean-field limit.
\newblock {\em Mathematical Models and Methods in Applied Sciences},
31(08):1625--1657, 2021.

\bibitem{haskovec2013flocking}
Jan Haskovec.
\newblock Flocking dynamics and mean-field limit in the {C}ucker--{S}male-type
model with topological interactions.
\newblock {\em Physica D: Nonlinear Phenomena}, 261:42--51, 2013.

\bibitem{jamil2013literature}
Momin Jamil and Xin-She Yang.
\newblock A literature survey of benchmark functions for global optimisation
problems.
\newblock {\em International Journal of Mathematical Modelling and Numerical
	Optimisation}, 4(2):150--194, 2013.

\bibitem{kalise2023jump}
Dante Kalise, Akash Sharma, and Michael~V. Tretyakov.
\newblock Consensus-based optimization via jump-diffusion stochastic
differential equations.
\newblock {\em Mathematical Models and Methods in Applied Sciences}, 33(2):289--339, 2023.

\bibitem{kennedy1995particle}
James Kennedy and Russell Eberhart.
\newblock Particle swarm optimization.
\newblock In {\em Proceedings of ICNN'95-international conference on neural
	networks}, volume~4, pages 1942--1948. IEEE, 1995.

\bibitem{klamroth2022consensus}
Kathrin Klamroth, Michael Stiglmayr, and Claudia Totzeck.
\newblock Consensus-based optimization for multi-objective problems: A
multi-swarm approach.
\newblock {\em arXiv preprint}, 2022.

\bibitem{loy2021boltzmann}
Nadia Loy and Andrea Tosin.
\newblock Boltzmann-type equations for multi-agent systems with label
switching.
\newblock {\em Kinetic and Related Models}, 14(5):867--894, 2021.

\bibitem{zbigniew1993ga}
Zbigniew Michalewicz.
\newblock {\em Genetic Algorithms + Data Structures = Evolution Programs}.
\newblock Springer, 1996.

\bibitem{mitchell1995genetic}
Melanie Mitchell.
\newblock Genetic algorithms: An overview.
\newblock {\em Complexity}, 1(1):31--39, 1995.

\bibitem{motsch2014heterophilious}
Sebastien Motsch and Eitan Tadmor.
\newblock Heterophilious dynamics enhances consensus.
\newblock {\em SIAM review}, 56(4):577--621, 2014.

\bibitem{nanbu1980direct}
Kenichi Nanbu.
\newblock Direct simulation scheme derived from the {B}oltzmann equation. i.
monocomponent gases.
\newblock {\em Journal of the Physical Society of Japan}, 49(5):2042--2049,
1980.

\bibitem{pareschi2001introduction}
Lorenzo Pareschi and Giovanni Russo.
\newblock An introduction to {M}onte {C}arlo method for the {B}oltzmann
equation.
\newblock In {\em ESAIM: Proceedings}, volume~10, pages 35--75. EDP Sciences,
2001.

\bibitem{pareschi2013interacting}
Lorenzo Pareschi and Giuseppe Toscani.
\newblock {\em Interacting multiagent systems: kinetic equations and {M}onte
	{C}arlo methods}.
\newblock OUP Oxford, 2013.

\bibitem{pinnau2017consensus}
Ren{\'e} Pinnau, Claudia Totzeck, Oliver Tse, and Stephan Martin.
\newblock { \em A consensus-based model for global optimization and its mean-field
limit}.
\newblock {\em Mathematical Models and Methods in Applied Sciences},
27(01):183--204, 2017.

\bibitem{poli2007particle}
Riccardo Poli, James Kennedy, and Tim Blackwell.
\newblock Particle swarm optimization: An overview.
\newblock {\em Swarm intelligence}, 1:33--57, 2007.

\bibitem{toledo2014global}
Claudio Fabiano~Motta Toledo, L~Oliveira, and Paulo~Morelato Fran{\c{c}}a.
\newblock Global optimization using a genetic algorithm with hierarchically
structured population.
\newblock {\em Journal of Computational and Applied Mathematics}, 261:341--351,
2014.

\bibitem{totzeck2021trends}
Claudia Totzeck.
\newblock Trends in consensus-based optimization.
\newblock In {\em Active Particles, Volume 3: Advances in Theory, Models, and
	Applications}, pages 201--226. Springer, 2021.

\bibitem{totzeck2020memory}
Claudia Totzeck and Marie-Therese Wolfram.
\newblock Consensus-based global optimization with personal best.
\newblock {\em Mathematical Biosciences and Engineering}, 17(5):6026--6044,
2020.

\end{thebibliography}
\end{document}